\documentclass[12pt]{scrarticle}
\usepackage[utf8]{inputenc}
\usepackage{authblk}

\usepackage{tikz}
\usepackage{tikz-cd}
\usepackage{amssymb, url}
\usepackage{amsthm}
\usepackage{amsmath}
\usepackage{amsfonts}
\usepackage{bbm} 
\usepackage{dsfont} 
\usepackage{enumitem}
\usepackage[colorlinks=true]{hyperref} 

\hypersetup{urlcolor=blue,linkcolor=black,citecolor=black,colorlinks=true}
\usepackage[symbol=$\uparrow~$,numberlinked=true]{footnotebackref}
\usepackage[all]{xy}
\usepackage{verbatim}
\usepackage{soul}

\usepackage{calrsfs}
\DeclareMathAlphabet{\pazocal}{OMS}{zplm}{m}{n} 


\newtheorem{Theorem}{Theorem}[section]
\newtheorem*{Theorem*}{Theorem}
\newtheorem{Lemma}[Theorem]{Lemma}
\newtheorem{Fact}{Fact}
\newtheorem{Corollary}[Theorem]{Corollary}
\newtheorem*{Corollary*}{Corollary}
\newtheorem{Proposition}[Theorem]{Proposition}
\newtheorem*{Proposition*}{Proposition}
\newtheorem{MainTheorem}{Theorem}

\theoremstyle{definition}

\newtheorem{Definition}[Theorem]{Definition}
\newtheorem{Proposition/Definition}[Theorem]{Proposition/Definition}

\newtheorem{Example}[Theorem]{Example}
\newtheorem*{Example*}{Example}

\theoremstyle{remark}
\newtheorem{Remark}[Theorem]{Remark}
\newtheorem*{Remark*}{Remark}

\newcommand{\CAlg}{\mathsf{Alg}}

\newcommand{\CGrp}{\mathsf{Grp}}

\newcommand{\coker}{\operatorname{coker}}

\newcommand{\tMot}{t\mathbf{Mot}}

\newcommand{\tMotrat}{t\mathbf{Mot}^{\operatorname{rat}}}

\newcommand{\Spec}{\operatorname{Spec}}

\newcommand{\tr}{\operatorname{tr}}

\newcommand{\Hom}{\operatorname{Hom}}

\newcommand{\Aut}{\operatorname{Aut}}
\newcommand{\id}{\operatorname{id}}


\newcommand{\Gal}{\operatorname{Gal}}
\newcommand{\Frob}{\operatorname{Frob}}

\newcommand{\Mat}{\mathsf{Mat}} 
\newcommand{\GL}{\operatorname{GL}}


\newcommand{\KI}{K_{\infty}}
\newcommand{\CI}{\mathbb{C}_{\infty}}

\newcommand{\mI}{\mathfrak{m}_{\infty}}

\newcommand{\bbone}{\mathds{1}}


\newcommand{\bC}{\mathbb{C}}

\newcommand{\bF}{\mathbb{F}}
\newcommand{\bG}{\mathbb{G}}

\newcommand{\bN}{\mathbb{N}}

\newcommand{\bP}{\mathbb{P}}
\newcommand{\bQ}{\mathbb{Q}}
\newcommand{\bR}{\mathbb{R}}

\newcommand{\bZ}{\mathbb{Z}}


\newcommand{\cF}{\mathcal{F}}
\newcommand{\cG}{\mathcal{G}}
\newcommand{\cH}{\mathcal{H}}

\newcommand{\cL}{\mathcal{L}}

\newcommand{\cO}{\mathcal{O}}

\newcommand{\cR}{\mathcal{R}}

\newcommand{\cZ}{\mathcal{Z}}



\newcommand{\fm}{\mathfrak{m}}


\DeclareMathOperator{\Ext}{Ext}

\newcommand{\abs}[1]{\left\lvert #1 \right\rvert}


\newcommand{\ls}[1]{(\!(#1)\!)}
\newcommand{\cs}[1]{\langle #1 \rangle} 

\newcommand{\trdeg}{\mathrm{trdeg}}


\newcommand{\TT}{\KI\cs{t}}


\newcommand{\hdt}[2]{\partial_t^{(#1)}\!\!\left(#2\right)}
\newcommand{\hdte}[1]{\partial_t^{(#1)}}  

\newcommand{\norm}[1]{\left\lVert #1 \right\rVert}

\DeclareMathOperator{\Li}{Li}



\usepackage[mathscr]{eucal} 
\newcommand{\eC}{\mathscr{C}}

\title{Carlitz twists: their motivic cohomology, regulators, zeta values and polylogarithms}


\author{Quentin Gazda\thanks{Centre de Mathématiques Laurent Schwartz (CMLS), \'Ecole Polytechnique, Cour Vaneau F-91120 Palaiseau}~ and Andreas Maurischat\thanks{RWTH Aachen University, D-52062 Aachen}}
\date{}

\addtolength{\textheight}{2cm}

\numberwithin{equation}{section}

\setcounter{tocdepth}{1}

\begin{document}

\maketitle

\begin{abstract}{\textbf{Abstract:}}
The integral $t$-motivic cohomology and the class module of a (rigid analytically trivial) Anderson $t$-motive were introduced by the first author in \cite{gazda2}. This paper is devoted to their determination in the particular case of tensor powers of the Carlitz $t$-motive, namely, the function field counterpart $\underline{A}(n)$ of Tate twists $\mathbb{Z}(n)$. We find out that these modules are in relation with fundamental objects of function field arithmetic: integral $t$-motivic cohomology governs linear relations among Carlitz polylogarithms, its torsion is expressed in terms of the denominator of Bernoulli-Carlitz numbers and the Fitting ideal of class modules is a special zeta value. We also express the regulator of $\underline{A}(n)$ for positive $n$ in terms of \emph{generalized Carlitz polylogarithms}; after establishing their algebraic relations using difference Galois theory together with the Anderson-Brownawell-Papanikolas criterion, we prove that the regulator is an isomorphism if, and only if, $n$ is prime to the characteristic.
\end{abstract}


{\footnotesize
\tableofcontents
}

\section{Introduction}
\subsection*{Motivations}
Let $\bF$ be a finite field with $q$ elements. The counterpart of the values of the Riemann zeta function at integral arguments in function fields arithmetic, known as \emph{Carlitz's zeta values}, were introduced by Carlitz in \cite{carlitz}. To define them, consider the formal power series in $x$:
\begin{equation}\label{eq:intro-zeta-theta}
\quad Z(x,n):=\sum_{d=0}^{\infty}{S_d(-n) x^d}, \quad \text{where} \quad  S_d(-n):=\sum_{\substack{a~\text{monic} \\ \deg(a)=d}}{\frac{1}{a(\theta)^n}}\in \bF(\theta).
\end{equation}
For positive integers $n$, we define $\zeta_C(n)$ as the converging series $Z(1,n)$ in the field $\KI=\bF(\!(1/\theta)\!)$. For non positive $n$, the sum $S_d(-n)$ vanishes for large enough $d$ (depending on~$n$, e.g. \cite{goss-zeta}), which allows to define $\zeta_C(n)$ by the same formula, now as an element of $\bF[\theta]$. \\

Special values of $\zeta_C$ are arguably equally interesting as those of $\zeta$. For instance, we have the \emph{Carlitz-Euler relation}, proved by Carlitz himself in \cite{carlitz}:
\[
\text{For~all~} n>0,~ q-1|n: \quad \zeta_C(n)=\frac{\operatorname{BC}_n}{\Pi(n)}\tilde{\pi}^n
\]
where $\operatorname{BC}_n\in \bF(\theta)$ is the $n$th \emph{Bernoulli-Carlitz rational fraction}, $\Pi(n)\in \bF[\theta]$ is \emph{Carlitz's factorial of $n$}, and $\tilde{\pi}$ -- which belongs to a separable closure $\KI^s$ of $\KI$ -- is \emph{Carlitz's period}. Such a denomination is justified by \emph{Carlitz's exponential function}: 
\[
\exp_C(x)=x+\frac{x^q}{\theta^q-\theta}+\frac{x^{q^2}}{(\theta^{q^2}-\theta^q)(\theta^{q^2}-\theta)}+\ldots=:\sum_{i=0}^{\infty}{\frac{x^{q^i}}{D_i}}
\]
which converges for any $x$ in $\KI^s$ and whose kernel is $\tilde{\pi}\cdot \bF[\theta]$. Carlitz's factorial $\Pi(n)$ is then defined by $\prod_{i}D_i^{c_i}$ where $n=c_0+c_1q+...$  is the base $q$ expansion of $n$. The Bernoulli-Carlitz rational fractions are defined analogously to classical arithmetic with $\operatorname{BC}_n/\Pi(n)$ standing as the $n$th coefficient in the series expansion of $x/\exp_C(x)$ (see \cite[\S 3 \& \S 9]{goss}). Due to the non-archimedean nature of $\KI$, the value $\zeta_C(1)$ is convergent and we have
\begin{equation}\label{eq:class-formula}
\zeta_C(1)=\log_C(1)
\end{equation}
where $\log_C$ is the \emph{Carlitz logarithm}:
\[
\log_C(x)=x+\frac{x^q}{\theta-\theta^q}+\frac{x^{q^2}}{(\theta-\theta^q)(\theta-\theta^{q^2})}+\ldots=x+\sum_{i=1}^{\infty}{\frac{x^{q^i}}{(\theta-\theta^q)(\theta-\theta^{q^2})\cdots (\theta-\theta^{q^i})}}
\]
defined as local inverse of $\exp_C(x)$. Formula \eqref{eq:class-formula} was again proved by Carlitz in \cite{carlitz}, and generalized by Taelman to his celebrated \emph{class formula} in \cite{taelman-L}. For general positive argument $n>0$, it was proven by Anderson-Thakur \cite{anderson-thakur} that  $\zeta_C(n)$ is expressible as an $\bF[\theta]$-linear combination of values of the \emph{Carlitz $n$th polylogarithm}:
\begin{equation}\label{eq:CarlitzPolylog}
\Li_n(x)=x+\sum_{i=1}^{\infty}{\frac{x^{q^i}}{(\theta-\theta^q)^n(\theta-\theta^{q^2})^n\cdots (\theta-\theta^{q^i})^n}}.
\end{equation}
at some elements $x$ of $\bF[\theta]$ of degree $<\frac{nq}{q-1}$ to ensure convergence. \\

In classical arithmetic, there is a bridge among zeta values and polylogarithms provided by \emph{regulators}. If $F$ is a number field with ring of integers $\cO_F$, Borel introduced an $\bR$-linear map, known as \emph{Borel's regulator}:
\begin{equation}\label{eq:borel-reg}
\operatorname{Reg}_{\operatorname{Borel}}:K_{2n-1}(\cO_F)\longrightarrow \bR^{d_n}
\end{equation}
where $d_n$ is an explicit integer which depends on $F$. On one hand, after careful analysis of functional equations satisfied by polylogarithms, Zagier \cite{zagier} conjectured an explicit presentation for the $K$-group $K_{2n-1}(\cO_F)$ and interpreted \eqref{eq:borel-reg} using a generalization of the Bloch-Wigner dilogarithm. On the other, Borel's regulator \eqref{eq:borel-reg} is known to coincide -- up to a factor $2$ -- with \emph{Beilinson's regulator} associated to the motive $M=\operatorname{Res}_{\cO_F/\bZ}\bQ(n)$ obtained as the restriction from $\cO_F$ to $\bZ$ of $n$th Tate twist over $\cO_F$:
\begin{equation}\label{eq:Beilinson-Reg}
\operatorname{Reg}_{\operatorname{Beilinson}}:\Ext^1_{\bZ}(\mathbbm{1},M)\longrightarrow \Ext^1_{\operatorname{Hdg}^+}(\mathbbm{1},M).
\end{equation}
In general, given a pure motive $M$ over $\bQ$, \emph{the Beilinson regulator of $M$} is the $\bR$-linear map \eqref{eq:Beilinson-Reg}, from the $\bQ$-vector space of extensions of mixed motives over $\bZ$ to $\bR$-vector space of extensions of mixed Hodge structures equipped with an action of the complex conjugation, induced by the exactness of the Hodge realization functor $\cH$. The far reaching \emph{Beilinson's conjecture} predicts the following:
\begin{enumerate}[label=$(\operatorname{BC})_{\arabic*}$]
    \item\label{item:BC1} The $\bQ$-vector space $\Ext^1_{\bZ}(\mathbbm{1},M)$ is finite dimensional,
    \item\label{item:BC2} If the weight of $M$ is $<-2$, the image of $\operatorname{Reg}_{\operatorname{Beilinson}}$ forms a $\bQ$-lattice of full rank,
    \item\label{item:BC3} The covolume of the latter lattice is the \emph{special $L$-value} of $M$.
\end{enumerate}
We refer to the survey \cite{nekovar} for details.  In the situation of $M=\operatorname{Res}_{\cO_F/\bZ}\bQ(n)$ and $n>1$, the latter recovers the Dedekind zeta value at $n$. Although the category of mixed motives with all the required properties as yet to be constructed, this picture makes sense for mixed Tate motives due to the work of Levine \cite{levine}. All together, we recover the link between polylogarithms and zeta values through $K$-theory and motivic cohomology of Tate twists. \\

The function field counterpart motivic cohomology and of Beilinson's regulator \eqref{eq:Beilinson-Reg} was recently introduced by the first author in \cite{gazda2}. The construction is motivated by the strong analogies between the classical category of mixed motives over $\bZ$ and the category of \emph{Anderson $t$-motives over $\bF[\theta]$}. The following definition first appeared in \cite{anderson}: 
\begin{Definition}[Anderson 1986]\label{def:tMot}
A \emph{$t$-motive over $A=\bF[\theta]$} is a pair $\underline{M}=(M,\tau_M)$ where $M$ is a finite free\footnote{In the usual definition of $t$-motives over an arbitrary base $R$, the underlying module $M$ is required to be locally-free of finite constant rank over $R[t]$ (e.g. \cite{hartl-isogeny}). In the situation $R=\bF[\theta]$ (or, latter $R=\bF(\theta)$), this is equivalent to be finite free.} $\bF[\theta,t]$-module and where $\tau_M$ is an isomorphism of $\bF[\theta,t]$-modules:
\[
\tau_M:(\tau^*M)\left[\frac{1}{t-\theta}\right]\stackrel{\sim}{\longrightarrow} M\left[\frac{1}{t-\theta}\right].
\]
Here $\tau:\bF[\theta,t]\to \bF[\theta,t]$ denotes the ring endomorphism mapping $\theta\mapsto \theta^q$ and leaving $\bF[t]$ invariant. It is said that $\underline{M}$ is \emph{effective} if $\tau_M(\tau^*M)\subset M$.
\end{Definition}
We organize $t$-motives over $A$ into a category $\tMot_A$ with obvious morphisms: it is an $\bF[t]$-linear additive category and can be equipped with a closed monoidal structure with a neutral object $\mathbbm{1}=(\bF[\theta,t],\mathbf{1})$ (see Section \ref{sec:tools}). This category is however not abelian as the "free" assumption prevents kernel and cokernel to exist in general. We can nevertheless declare a short sequence $0\to \underline{M}\to \underline{E}\to \underline{N}\to 0$ in $\tMot_A$ to be \emph{exact} whenever the underlying sequence of $\bF[\theta,t]$-modules is. This endows $\tMot_A$ with the structure of an exact category \cite[Prop. 2.15]{gazda1}, and we denote by $\Ext^1_A(\underline{N},\underline{M})$ the corresponding extension module of $\underline{N}$ by $\underline{M}$. \\

The next proposition is classical and is proven in Prop. 2.16 in \emph{loc.\,cit.}.
\begin{Proposition}\label{prop:extensions}
The map $\displaystyle\iota:\Hom_{\bF[\theta,t]}(\tau^*N,M)\left[\frac{1}{t-\theta}\right]\to \Ext^1_{A}(\underline{N},\underline{M})$ given by
\[
h\longmapsto \left[0\to \underline{M}\to \left(M\oplus N,\left(\begin{smallmatrix}\tau_M & h \\ 0 & \tau_N \end{smallmatrix}\right)\right)\to \underline{N}\to 0\right]
\]
is an $\bF[t]$-linear surjection whose kernel is the submodule generated by the differences $f\circ \tau_N-\tau_M\circ \tau^*f$, for $f\in \Hom_{\bF[\theta,t]}(N,M)$.
\end{Proposition}

As is apparent from the proposition, the module $\Ext^1_A(\underline{N},\underline{M})$ is by no means finitely generated over $\bF[t]$ whenever $\underline{M}$ or $\underline{N}$ are both non zero. This is partly due to the fact that, given $h\in \Hom(\tau^*N,M)$ non zero, the sequence of elements $(h\cdot (t-\theta)^{-i})_{i\geq 0}$ produces through $\iota$ extensions that will eventually be linearly independent to all the previous ones. \\
To bound the apparent "pole at $t=\theta$", it has been suggested to restrict attention to \emph{effective extensions}; this leads however to \emph{erratic} results\footnote{Given a symmetric monoidal exact category $\eC$ and two objects $X$ and $Y$, $Y$~being dualizable with dual $Y^{\vee}$, there is an isomorphism functorial in both $X$ and $Y$:
\begin{equation}\label{eq:dualizibility-in-C}
\Ext^1_{\eC}(Y,X)\stackrel{\sim}{\longrightarrow} \Ext^1_{\eC}(\mathbbm{1},X\otimes Y^{\vee})
\end{equation}
where $\mathbbm{1}$ is a neutral object for the monoidal structure. Although the category of effective $t$-motives does not admit duals, the isomorphism \eqref{eq:dualizibility-in-C} may fail even if both sides make sense. For instance, it was realized by Mornev and Taelman that the module of effective extensions of $\underline{A}(n)$ by $\underline{A}(m)$ does not only depend on the difference $m-n$, thus preventing an isomorphism like \eqref{eq:dualizibility-in-C} to hold.}. We rather follow Pink's insight in \cite[\S 8]{pink}, and consider \emph{regulated extensions} as is done in \cite{gazda1} and \cite{gazda2}.\\

Before stating the definition of \emph{regulation}, we recall that of \emph{Hodge polygons}. Consider~$M_K=M\otimes_{\bF[\theta]}\bF(\theta)$. The relative elementary divisors of $\tau_M(\tau^*M_K)$ and $M_K$ locally at the ideal $(t-\theta)$ of $\bF(\theta)[t]$ define integers $w_1\leq w_2\leq \cdots \leq w_r$ where $r$ is the rank of $M$. The \emph{Hodge polygon of $\underline{M}$} is the unique piece-wise linear convex function $[0,r]\to \bR$ starting at $(0,0)$ and having a given slope $s\in \bZ$ on a subinterval of length $\#\{0\leq i\leq r|w_i=s\}$.

\begin{Definition}\label{def:regulated}
An exact sequence (or extension) $0\to \underline{M}\to \underline{E}\to \underline{N}\to 0$ in $\tMot_A$ is \emph{regulated} if the Hodge Polygons of $\underline{M}\oplus \underline{N}$ and $\underline{E}$ coincide. 
\end{Definition}

Observe that the analogous condition for number fields is always satisfied as the Hodge realization functor is exact. In the function field situation, one can show that this notion already solves the "boundedness" issue (\emph{cf}. \cite[Prop. 5.3]{gazda1}):
\begin{Proposition}\label{prop:regulation}
Let $h\in \Hom(\tau^*N,M)[(t-\theta)^{-1}]$. The extension $\iota(h)$ is regulated if and only if $h=g\circ \tau_N-\tau_M\circ f$ for some linear maps $g:N\to M$ and $f:\tau^*N\to \tau^*M$.
\end{Proposition}

The submodule of \emph{regulated extensions} $\Ext^{1,\operatorname{reg}}_A(\underline{N},\underline{M})$ is however still not finitely generated in general. To understand why, we now assume that both $\underline{N}$ and $\underline{M}$ are objects of $\tMotrat_A$, the full subcategory of $\tMot_A$ consisting of \emph{rigid analytically trivial} objects (Definition \ref{def:rat}). To extract a natural finite submodule out of $\Ext^{1,\operatorname{reg}}_A(\underline{N},\underline{M})$, one introduces the $A$-motivic \emph{Betti realization} as suggested\footnote{A similar strategy was employed by Taelman \cite{taelman} in the case of Drinfeld modules.} in \cite{gazda2}: similar to the number field case, there is a functor $\underline{M}\mapsto \underline{M}_B$ from $\tMotrat_A$ to the category of $\bF[t]$\nobreakdash-linear continuous representations of the profinite group $G_{\infty}=\Gal(\KI^s|\KI)$, playing the role of the classical singular realization. Its exactness induces a morphism of $\bF[t]$-modules:
\begin{equation}\label{eq:rBetti}
r_B(\underline{N},\underline{M}):\Ext^{1,\operatorname{reg}}_A(\underline{N},\underline{M})\longrightarrow \Ext^1_{A[G_\infty]}(\underline{N}_B,\underline{M}_B).
\end{equation}
By contrast with the classical situation where coefficients are in $\bQ$ and where $\Gal(\bC|\bR)$ is the counterpart of $G_{\infty}$, the right-hand side of \eqref{eq:rBetti} might be non trivial. It is usual to give individual names to the kernel and cokernel of $r_B$:
\[
\Ext^{1,\operatorname{reg}}_{A,\infty}(\underline{N},\underline{M}):=\ker r_B(\underline{N},\underline{M}), \quad \operatorname{Cl}(\underline{M}):=\coker r_B(\mathbbm{1},\underline{M}).
\]
In most of what follows, $\underline{N}=\mathbbm{1}$, in which case we call them respectively \emph{the integral $t$-motivic cohomology of $\underline{M}$} and \emph{the class module of $\underline{M}$}. The next theorem was the gist of \cite{gazda2} (see Thm. 4.1 there):
\begin{Theorem}
The modules $\Ext^{1,\operatorname{reg}}_{A,\infty}(\underline{N},\underline{M})$ and $\operatorname{Cl}(\underline{M})$ are finitely generated over $\bF[t]$.
\end{Theorem}
\begin{Remark}\label{rmk:dual}
The theorem in \emph{loc.\,cit.} was in fact stated for $\underline{N}=\mathbbm{1}$. The above version is equivalent to it, as the canonical morphism $\Ext^1_{A}(\underline{N},\underline{M})\to \Ext^1_{A}(\mathbbm{1},\underline{M}\otimes \underline{N}^{\vee})$ maps an extension $\underline{E}$ of $\underline{N}$ by $\underline{M}$ to a regulated extension of $\mathbbm{1}$ by $\underline{N}^{\vee}\otimes \underline{M}$ (resp. having analytic reduction at $\infty$) if and only if $\underline{E}$ is itself regulated (resp. has analytic reduction at $\infty$) (see \cite[Cor. 5.4]{gazda1}). In particular, it induces an isomorphism
\[
\Ext^{1,\operatorname{reg}}_{A,\infty}(\underline{N},\underline{M}) \stackrel{\sim}{\longrightarrow}\Ext^{1,\operatorname{reg}}_{A,\infty}(\mathbbm{1},\underline{M}\otimes \underline{N}^{\vee}).
\]
\end{Remark}

In \cite[Def. 4.3]{gazda2}, a regulator attached to a $t$-motive is introduced, aiming to play the part of Beilinson's regulator. The counter-part of Hodge structures in the context of Anderson $t$-motives were discovered by Pink in \cite{pink} and are called \emph{Hodge-Pink structures}. Similarly, there is a \emph{Hodge-Pink realization functor} $\cH$ having the category $\tMotrat$ for source, and the regulator is defined as the induced regulator morphism on extensions:
\begin{equation}\label{eq:my-reg}
\operatorname{Reg}(\underline{M}):\Ext^{1,\operatorname{reg}}_{A,\infty}(\bbone,\underline{M}) \longrightarrow \Ext^{1,\operatorname{ha}}_{\mathbf{HP}^+,\infty}(\bbone,\cH(\underline{M})),
\end{equation}
where the target is a certain $\KI$-vector space of extensions of Hodge-Pink structures. We refer the reader to \cite{gazda2} for notations and constructions. The following was the second main result of \emph{loc.\,cit.} (see Thm. 4.4 there):
\begin{Theorem}
Assume that the weights of $\underline{M}$ are negative. Then the rank of source of \eqref{eq:my-reg} as an $A$-module equals the dimension of its target as a $\KI$-vector space.  
\end{Theorem}

However, it is not known whether the image of $\operatorname{Reg}(\underline{M})$ has full rank in general, namely, if the counterpart of Beilinson's conjecture \ref{item:BC2} holds for $\underline{M}$. Below, we shall show that this is not always the case, even for $t$-motives \emph{as simple as} Carlitz twists.

\subsection*{Results}
This paper is devoted to the determination of the integral $t$-motivic cohomology and of the class module in the case where $\underline{M}$ is Carlitz's $n$th twist $\underline{A}(n)$ over $\bF[\theta]$, the counterpart of the classical Tate twist $\bQ(n)$ over $\bZ$:
\begin{Definition}\label{def:ct}
The \emph{$n$th Carlitz twist} $\underline{A}(n)$ is the $t$-motive over $\bF[\theta]$ whose underlying module is $\bF[\theta,t]$ and whose morphism is $(t-\theta)^{-n}\mathbf{1}$.
\end{Definition}

In virtue of Proposition \ref{prop:extensions}, the extension module $\Ext^1_{A}(\mathbbm{1},\underline{A}(n))$ is given as the image of $\bF[\theta,t][(t-\theta)^{-1}]$ through $\iota$. We define 
\begin{Definition}
For $\alpha\in \bF[\theta]$ of degree $<\frac{nq}{q-1}$, we call \emph{the polylog extension class of $\alpha$} and denote by $[\underline{L}(\alpha)]$ the extension obtained as the image of $\frac{\alpha}{(t-\theta)^n}$ by $\iota$ in $\Ext^1_{A}(\mathbbm{1},\underline{A}(n))$.
\end{Definition}

The naming \emph{polylog class} is justified by the fact that Carlitz polylogarithms appear as function fields periods of the underlying $t$-motive (e.g. \cite[\S 4.3]{chang} in the setting of \emph{dual $t$-motives}). Our first result describes the structure of the integral $t$-motivic cohomology of the Carlitz's $n$th twist $\underline{A}(n)$.

\begin{MainTheorem}\label{thm:polylogclass}
The polylog classes of the elements in $\{\alpha\in \bF[\theta]|\deg_\theta \alpha<n\}$ generate the $\bF[t]$-module $\Ext^{1,\operatorname{reg}}_{A,\infty}(\mathbbm{1},\underline{A}(n))$. Furthermore, there is a relation of the form
\[
a_1(t)\cdot [\underline{L}(\alpha_1)]+\ldots+a_s(t)\cdot [\underline{L}(\alpha_s)]=0, \quad a_i(t)\in \bF[t],
\]
if, and only if, $a_1(\theta)\Li_n(\alpha_1)+\ldots+a_s(\theta)\Li_n(\alpha_s)$ is an $\bF[\theta]$-multiple of $\tilde{\pi}^n$. 
\end{MainTheorem}

\begin{Remark}
Very recently, Chen-Harada \cite{chen-harada} obtained linear relations among Carlitz polylogarithms using \emph{multiplicative dependence} in the context of the Carlitz module and its tensor powers. It would be interesting to understand the relation among our approaches. 
\end{Remark}

The way the theorem is stated is somehow reminiscent of Zagier's conjecture for number fields; the latter predicts that linear relations among classical $n$th polylogarithms are dictated by relations of classes in the group $K_{2n-1}(F)$ (for $F$ number fields), itself related to the motivic extensions of $\mathbbm{1}$ by $\bQ(n)$ (see \cite{beilinson-deligne} for details). 

We also derive the following explicit presentation of the integral $t$-motivic cohomology (see Theorems \ref{thm:twist-zero}, \ref{thm:revelo} and \ref{thm:positive-twists}):

\begin{MainTheorem}\label{thm:mot-coh-car}
For all integers $n$, we have isomorphisms of $\bF[t]$-modules:
\[
\Ext^{1,\operatorname{reg}}_{A,\infty}(\mathbbm{1},\underline{A}(n)) \cong \begin{cases} (0) & \text{if~}n\leq 0, \\ \bF[t]^{n} & \text{if~}n>0 \text{~ and~ }q-1\nmid n, \\ \bF[t]^{n-1}\oplus \bF[t]/(\varepsilon_n(t)) & \text{if~}n>0 \text{~ and~ }q-1|n,  \end{cases}
\]
where, for $n>0$ and divisible by $q-1$, the polynomial $\varepsilon_n$ is determined as follows. Let $m=\frac{n}{q-1}$, and write $m=p^c m_0$ for $c\geq 0$ and $m_0>0$ prime to $p$. Let also $\ell$ be the least common multiple of the integers $r$ such that $q^r-1|n$. Then, we have
\[
\varepsilon_n(t)=\left(t^{q^\ell}-t \right)^{p^c}.
\]
\end{MainTheorem}

\begin{Example*}
If $n=q^k(q-1)$ for some integer $k\geq 0$, then $\varepsilon_n(t)=t^{q^{k+1}}-t^{q^k}$. This comes from the identity
\[
\Li_n(1)=\frac{\tilde{\pi}^n}{\theta^{q^{k+1}}-\theta^{q^k}}.
\]
However, if $n$ is not of this form, relations among $\tilde{\pi}^n$ and Carlitz polylogarithms may be subtler. Pellarin recently found new identities of this type \cite{pellarin}; it would be interesting to interpret them from the point of view of integral $t$-motivic cohomology.   
\end{Example*}

Let $\{\alpha_1,\ldots,\alpha_n\}$ be a basis of $\{\alpha\in\bF[\theta]|\deg_\theta \alpha<n\}$. The combination of Theorems \ref{thm:polylogclass} and \ref{thm:mot-coh-car} implies in particular that the family $(\Li_n(\alpha_1),\ldots,\Li_n(\alpha_n))$ is linearly independent over $\bF[\theta]$ when $q-1\nmid n$, and linearly dependent to $\tilde{\pi}^n$ if $q-1|n$. Stronger results hold (see Theorem \ref{thm:algebraic-independence-of-higher-Carlitz-polylog} below).

\begin{Remark}
Contrary to $K_{2n-1}(\bZ)$, the rank of $\Ext^{1,\operatorname{reg}}_{A,\infty}(\mathbbm{1},\underline{A}(n))$ \emph{grows as $n$ grows}. This fact is not erratic, but rather a feature of function fields arithmetic. The reason is \emph{morally} given as follows: for $n\geq 1$, the disk of convergence of the Carlitz $n$th polylogarithm \eqref{eq:CarlitzPolylog} is $\{x\in \KI|\deg(x)<nq/(q-1)\}$. In particular, its radius is not $1$ -- as for the number field polylogarithm -- but slightly bigger and grows strictly as $n$ grows. For each element $\alpha\in \bF[\theta]$ in the radius of convergence of $\Li_n(x)$, there is an associated polylog class $[L_n(\alpha)]$ in the integral $t$-motivic cohomology of $\underline{A}(n)$ by Theorem \ref{thm:polylogclass}. 
\end{Remark}

D. Thakur, in \cite{thakur2004}, suggested that the function field analogue of $\operatorname{B}_n/n$ -- where $\operatorname{B}_n$ is the $n$th Bernoulli number (e.g. $\operatorname{B}_1=-\frac{1}{2}$, $\operatorname{B}_2=\frac{1}{6}$, $\operatorname{B}_3=0$) -- is rather the rational fraction $\operatorname{BC}_n \cdot \Pi(n-1)/\Pi(n)$. The following amusing corollary of Theorem \ref{thm:mot-coh-car}, obtained from explicit computations due to Alejandro-Rodríguez \cite{alejandro-rodriguez}, relates the torsion of the integral $t$-motivic cohomology of $\underline{A}(n)$ to the denominator of those elements:
\begin{Corollary*}
Let $\varepsilon_n$ be as in Theorem \ref{thm:mot-coh-car}. Then $\varepsilon_n(\theta)\cdot \operatorname{BC}_n \cdot \Pi(n-1)/\Pi(n)$ is in $\bF[\theta]$.
\end{Corollary*}

That is, the torsion of $t$-motivic cohomology is killed by the denominator of $\operatorname{BC}_n \cdot \Pi(n-1)/\Pi(n)$ written in lowest term. This is somehow similar to the classical situation where the the torsion in $K_{2n-1}(\bZ)$ is killed by the denominator of $\operatorname{B}_n/n$.\\

On the other-hand, the class module $\operatorname{Cl}(\underline{A}(n))$ seems to play a role \emph{dual} to that of $\Ext^{1,\operatorname{reg}}_{A,\infty}(\mathbbm{1},\underline{A}(n))$, as we next show (see Theorems \ref{thm:twist-zero}, \ref{thm:class-module-n-neg} and also Proposition \ref{prop:class-module-n-pos}):
\begin{MainTheorem}\label{thm:rk-Cl}
For all integers $n$, we have isomorphisms of $\bF[t]$-modules:
\[
\operatorname{Cl}(\underline{A}(n)) \cong \begin{cases} (0) & \text{if~}n\geq 0, \\ \operatorname{Cl}(\underline{A}(n))_{\operatorname{tors}} & \text{if~}n<0 \text{~ and~ }q-1\nmid n, \\ \bF[t]\oplus \operatorname{Cl}(\underline{A}(n))_{\operatorname{tors}} & \text{if~}n>0 \text{~ and~ }q-1|n,  \end{cases}
\]
where $\operatorname{Cl}(\underline{A}(n))_{\operatorname{tors}}$ is the torsion submodule of $\operatorname{Cl}(\underline{A}(n))$.
\end{MainTheorem}

Determining the torsion module $\operatorname{Cl}(\underline{A}(n))_{\operatorname{tors}}$ seems to be a challenging task. However, we are able to describe its Fitting ideal. Reconsider the series $Z(x,n)$ introduced earlier in \eqref{eq:intro-zeta-theta}, although this time in the variable $t$ instead of $\theta$, as we now interpret zeta values in the coefficient ring $\bF[t]$. Let us write $Z(x,n)$ uniquely as 
\[
Z(x,n)=\zeta_C^*(n)(x-1)^{h_n}+O((x-1)^{h_n+1}) \in \bF[t][\![x-1]\!],
\]
where $h_n\geq 0$ and $\zeta_C^*(n)\in \bF[t]$. Using the trace formula in the form described by Taelman in \cite{taelman-woodshole}, we obtain (see Theorem \ref{thm:zeta}):  

\begin{MainTheorem}\label{thm:zeta?}
For all integers $n$, the rank of $\operatorname{Cl}(\underline{A}(n))$ is equal to $h_n$. For non positive~$n$, $\zeta_C^*(n)$ generates the Fitting ideal of $\operatorname{Cl}(\underline{A}(n))$. 
\end{MainTheorem}

We believe that Theorem \ref{thm:zeta?} could be explained by elementary means. In Remark \ref{rmj:elementary-strategy} below, we give an elementary strategy to tackle this result, and challenge the interested reader to solve it this way. \\

As an application of Theorems \ref{thm:mot-coh-car}, \ref{thm:zeta?}, and together with classical results on Carlitz zeta values due to Carlitz, Lee and Thakur, we obtained the following result which describes situations when the module of regulated extensions is computed solely in terms of Galois cohomology (see Corollary \ref{cor:thakur}):
\begin{Corollary*}
Assume that $q=p$ is prime and let $n\geq 0$. Then, the map 
\[
r_B:\Ext^{1,\operatorname{reg}}_A(\mathbbm{1},\underline{A}(-n))\longrightarrow \operatorname{H}^1(G_\infty,\underline{A}(-n)_B),
\]
deduced from \eqref{eq:rBetti} with $\underline{N}=\mathbbm{1}$ and $\underline{M}=\underline{A}(-n)$, is an isomorphism if, and only if, the sum of the digits of $n$ written in base $p$ is $<p-1$.
\end{Corollary*}

Our last task is devoted to the study of the regulator of $\underline{A}(n)$ and the counterpart of \ref{item:BC2}. To express the regulator, we introduce variants of the Carlitz polylogarithm, defined as follows. For $x\in \KI^s$ with $\deg x<\frac{nq}{q-1}$, the series 
\begin{equation}\nonumber
\xi_{x,n}(t)=\sum_{i=0}^{\infty}{\frac{x^{q^i}}{(t-\theta)^n(t-\theta^q)^n\cdots (t-\theta^{q^i})^n}}
\end{equation}
converges to a meromorphic function in the variable $t\in \KI^s$ having a pole of order $n$ at $t=\theta$. This series, which has already been introduced by Chang-Yu \cite[(3.3)]{chang-yu}, can be suitably interpreted as a \emph{deformation} of $\Li_n(x)$. This allows to introduce \emph{generalized Carlitz polylogarithms} $\Li_n^{[j]}(x)\in \KI^s$ by mean of the expansion: 
\begin{equation}\label{eq:higherpolylog}
\xi_{x,n}(t)=\frac{\Li_n^{[0]}(x)}{(t-\theta)^n}+\frac{\Li_n^{[1]}(x)}{(t-\theta)^{n-1}}+\ldots+ \frac{\Li_n^{[n-1]}(x)}{(t-\theta)}+O(\KI^s[\![t-\theta]\!]).
\end{equation}
In particular, $\Li_n^{[0]}(x)$ is the Carlitz $n$th polylogarithm defined in \eqref{eq:CarlitzPolylog}. Using \emph{prolongation $t$-motives} as defined by the second author in \cite{maurischat}, difference Galois theory and the Anderson-Brownawell-Papanikolas criterion, we are able to show:
\begin{MainTheorem}\label{thm:algebraic-independence-of-higher-Carlitz-polylog}
Assume that $n$ is prime to the characteristic $p$ of $\bF$. Let $(\alpha_1,\ldots,\alpha_s)$ be elements of $\bF[\theta]$ of degree $<n$ such that the family of classes $\left([\underline{L}(\alpha_1)],\ldots,[\underline{L}(\alpha_s)]\right)$ are linearly independent in 
$\Ext^{1,\operatorname{reg}}_{\infty}(\bbone,\underline{A}(n))$. Then, the family of elements in $\KI^s$,
\[
(\tilde{\pi},\Li_n^{[j]}(\alpha_i),~i\in\{1,\ldots,s\},~j\geq 0)
\]
is algebraically independent over $\bF(\theta)$.
\end{MainTheorem}

Our main interest in Theorem \ref{thm:algebraic-independence-of-higher-Carlitz-polylog} stands in the next description of the regulator map. We show in Proposition \ref{prop:equivalence-beilinson-for-carlitz} below:
\begin{Proposition*}
Let $(\alpha_1,\ldots,\alpha_s)$ be elements of $\bF[\theta]$ of degree $<n$ such that the family of classes $\left([\underline{L}(\alpha_1)],\ldots,[\underline{L}(\alpha_s)]\right)$ is a basis of 
$\Ext^{1,\operatorname{reg}}_{\infty}(\bbone,\underline{A}(n))$. Then, $\mathrm{Reg}(\underline{A}(n))$ is represented by the matrix 
\[
\begin{pmatrix}
\Li_n^{[n-s]}(\alpha_1) & \Li_n^{[n-s]}(\alpha_2) & \cdots & \Li_n^{[n-s]}(\alpha_s) \\ 
\Li_n^{[n-s+1]}(\alpha_1) & \Li_n^{[n-s+1]}(\alpha_2) & \cdots & \Li_n^{[n-s+1]}(\alpha_s) \\ 
\vdots & \vdots & \ddots & \vdots \\
\Li_n^{[n-1]}(\alpha_1) & \Li_n^{[n-1]}(\alpha_2) & \cdots & \Li_n^{[n-1]}(\alpha_s) 
\end{pmatrix} \in \Mat_s(\KI).
\]
\end{Proposition*}

In particular, the combination of Theorem \ref{thm:algebraic-independence-of-higher-Carlitz-polylog} and the above proposition gives the following counterpart of Beilinson's conjecture:
\begin{Corollary*}
If $p\nmid n$, the regulator of $\underline{A}(n)$ is an isomorphism. 
\end{Corollary*}

In retrospect, this is surely the most sophisticated way we ever encountered to prove the non-vanishing of a determinant! \\

However, the above proposition also implies that the conclusion of this corollary does not hold whenever $p|n$. From the relation $\xi_{\alpha,n}(t)^p=\xi_{\alpha^p,np}(t)$, where $p$ is the characteristic of $\bF$, we have
\begin{equation}\label{eq:p-polylog}
\text{For~}k\geq 0:\quad \Li_{np}^{[k]}(\alpha^p)=\begin{cases}0 & \text{if~}p\nmid k, \\ \Li_n^{[k/p]}(\alpha)^p & \text{if~}p\mid k \end{cases}
\end{equation}
which implies the vanishing of some lines in the above matrix. Concretely, one derives that the rank of $\mathrm{Reg}(\underline{A}(n))$ is $\frac{n}{p^{v_p(n)}}-\mathbf{1}_{q-1|n}$ (see Corollary \ref{cor:exact-rank}). This is quite a surprise and contrasts with the number field situation. For general $t$-motives $\underline{M}$, it is not clear -- at least to the authors -- what to expect regarding the rank of $\mathrm{Reg}(\underline{M})$.

\paragraph{Acknowledgments:} We are much obliged to Steven Charlton for his computational devotion to a late conjecture which evolved into Theorem \ref{thm:zeta?}, and to Dinesh Thakur for his multiple questions and suggestions regarding this research. We are also very much indebt to Javier Fres\'an for both his lights on the number fields situation, as well as his precious proofreading of the present document. The first author is grateful to Max Planck Institute for Mathematics in Bonn for its hospitality and financial support.

\section{Tools and notations from the general theory}\label{sec:tools}
\subsection{Notation}
Let $\bF$ be a finite field with $q$ elements and of characteristic $p$. Let $A=\bF[\theta]$, and $K=\bF(\theta)$ its fraction field. There are in fact two isomorphic versions of $K$ (or $A$) playing a role in this paper: one for the \emph{coefficient field} and one for the \emph{base field}. As it is often done in the literature, we will distinguish two variables $t$ and $\theta$ for \emph{coefficients} and \emph{base} respectively.

As an instance of this, we denote by $K[t]$ the polynomial ring in the variable $t$ with coefficients in $K$, and by $\tau:K[t]\to K[t]$ the ring endomorphism raising the coefficients to the $q$th power. Still in accordance with the literature, we also denote by $p^{(k)}$ the element $\tau^k(p)$ and call it \emph{the $k$th twist of $p$} (for $p\in K[t]$ and $k\geq 1$).

\medskip

On $K$, we consider the norm $\abs{\cdot}=q^{\deg(\cdot)}$, and we denote by $\KI$ the completed field, identified with $\bF(\!(\theta^{-1})\!)$. The norm $\abs{\cdot}$ extends to a separable closure $\KI^s$ of $\KI$. The completion of $\KI^s$, denoted by $\CI$, is well-known to be algebraically closed and complete. 

\medskip

For $L\subset \CI$ a complete subfield, we denote by $L\langle t \rangle$ the \emph{Tate algebra} over $L$, that is, the $L$-algebra of power series in the variable $t$ whose coefficients tend to $0$:
\[
L\langle t \rangle =\left\{\sum_{i=0}^\infty a_i t^i~\bigg |~\text{for~all~} i:~a_i\in L,~\lim_{i\to \infty}\abs{a_i}=0 \right\}.
\]
For $f=\sum_i{a_i t^i}\in L\langle t \rangle$, we still denote by $\tau(f)=f^{(1)}$ its \emph{twist} $\sum_i{a_i^q t^i}\in L\langle t \rangle$. We also equip $L\langle t \rangle$ with the \emph{Gauss norm} $\norm{\cdot}$ defined by means of the formula
\[ \norm{\sum_{i=0}^\infty a_i t^i} := \max\{ \abs{a_i} \mid i\geq 0 \}. \]
The Tate algebra $L\langle t \rangle$ is complete with respect to $\|\cdot \|$, and $L[t]$ is a dense subring. Note that we also have $\norm{f^{(1)}}=\norm{f}^q$. \\

We also let $L\langle\!\langle t \rangle\!\rangle$ be the ring of \emph{entire series} on $L$, that is :
\begin{equation}\label{eq:entire-series}
L\langle\!\langle t \rangle\!\rangle:=\left\{\sum_{i\geq 0}{a_i t^i}\in L[\![t]\!]~|~\forall \rho>0:~\lim_i |a_i|\rho^i=0\right\}.
\end{equation}
It is a subring of $\CI\langle t\rangle$ which is stable under $\tau$ and hyperdifferentiation of any order.

\subsection{Anderson $t$-motives}
Let $R$ be either be $A$ or $K$. We recall the definition of a (non necessarily effective) \emph{Anderson $t$-motive over $R$} : it is a pair $\underline{M}=(M,\tau_M)$ where $M$ is a finite free $R[t]$\nobreakdash-module and where $\tau_M$ is an isomorphism of $R[t]$-modules:
\[
\tau_M:(\tau^*M)\left[\frac{1}{t-\theta}\right]\stackrel{\sim}{\longrightarrow} M\left[\frac{1}{t-\theta}\right].
\]
Recall that $\tau^*M$ is the $R[t]$-module which, as an abelian group, is  $R[t]\otimes_{\tau,R[t]}M$ - where $R[t]$ on the left-hand side of the tensor product is seen as a $R[t]$-module through $\tau$ - and where the $R[t]$-module structure is obtained by multiplying elements on the left-hand side of the tensor product. For $m\in M$, we denote by $\tau^*m\in \tau^*M$ the element $(1\otimes_\tau m)$; in particular, the map
\[
\tau^*:M\longrightarrow \tau^*M, \quad m\longmapsto \tau^* m
\]
is not linear, but only \emph{$\tau$-linear}.

\medskip

A morphism of $t$-motives $f:\underline{M}\to \underline{N}$ over $R$ is an $R[t]$-linear map of the underlying modules such that $f\circ \tau_M=\tau_N\circ \tau^*f$. We gather $t$-motives and their morphisms in an $\bF[t]$-linear category $\tMot_R$. We call a sequence $0\to \underline{M}\to \underline{E} \to \underline{N}\to 0$ in $\tMot_R$ \emph{exact} if its underlying sequence of modules is. It is not hard to check that this equip $\tMot_R$ with the structure of an exact category \cite[Prop. 2.13]{gazda1}.\\

We also endow $\tMot_R$ with a symmetric monoidal structure as it is usually done. Given two $t$-motives $\underline{M}$ and $\underline{N}$, we define their tensor product $\underline{M}\otimes \underline{N}$ as the $t$-motive over $R$ having for module $M\otimes_{R[t]} N$ and for morphism:
\begin{multline*}
\tau^*(M\otimes_{R[t]} N)\left[\frac{1}{t-\theta}\right]\cong (\tau^*M)\left[\frac{1}{t-\theta}\right]\otimes_{R[t]}  (\tau^*N)\left[\frac{1}{t-\theta}\right] \\
\xrightarrow{\tau_M\otimes \tau_N} M\left[\frac{1}{t-\theta}\right]\otimes_{R[t]} N\left[\frac{1}{t-\theta}\right]\cong (M\otimes_{R[t]} N)\left[\frac{1}{t-\theta}\right].
\end{multline*}
One shows that $\tMot_R$ is \emph{rigid} in the sense that it possesses all internal $\Hom$. In particular, all objects are dualizable. \\

Following Anderson \cite{anderson}, we define the \emph{Betti realization of $\underline{M}$} as the $\bF[t]$-module:
\[
\underline{M}_B:=\left\{f\in M\otimes_{R[t]}\CI\langle t \rangle~|~\tau_M(\tau^*f)=f\right\}.
\]
It is well-known to be a finite free $\bF[t]$-module. By continuity, $\underline{M}_B$ carries a compatible action of $G_{\infty}=\Gal(\KI^s|\KI)$ as acting on $\CI\langle t\rangle$ coefficient-wise, and one can show that this action is continuous (see \cite[Prop. 3.11.]{gazda2}). As in \cite{gazda2}, we will make use of the submodule $\underline{M}^+_B$ of elements fixed by $G_\infty$. \\

The following definition goes back to Anderson's original paper:
\begin{Definition}\label{def:rat}
The $t$-motive $\underline{M}$ is said to be \emph{rigid analytically trivial} if the multiplication map:
\[
\underline{M}_B\otimes_{\bF[t]} \CI\langle t \rangle \longrightarrow M\otimes_{R[t]}\CI\langle t \rangle
\]
is bijective.
\end{Definition}
We denote by $\tMotrat_R$ the full subcategory of rigid analytically trivial objects. It is an exact subcategory stable by extensions \cite[\S 3]{gazda2} which, since the functor $\underline{M}\mapsto \underline{M}_B$ also commutes with $\otimes$, is again symmetric monoidal and rigid. \\

Along this paper, we will be interested in \emph{Carlitz twists} $\underline{A}(n)$ as defined in Definition \ref{def:ct}. The notation is here to stress that $\underline{A}(n)$ is the function field counterpart of the Tate twist $\bQ(n)$. In fact, the same additive relations hold:
\[
\underline{A}(0)=\mathbbm{1}, \quad \underline{A}(n)^{\vee}\cong \underline{A}(-n), \quad \underline{A}(n)\otimes \underline{A}(m)\cong \underline{A}(n+m).
\]

Let $\lambda_\theta\in \KI^s$ be a $(q-1)$th root of $-\theta$. The \emph{Anderson-Thakur function $\omega$} is defined as the convergent product in the Tate algebra:
\[
\omega(t)=\lambda_\theta\prod_{i=0}^{\infty}{\left(1-\frac{t}{\theta^{q^i}}\right)^{-1}}\in \KI(\lambda_\theta)\langle t \rangle.
\]
It is well-known that all Carlitz twists are rigid analytically trivial, with Betti realization being of rank $1$ over $\bF[t]$ with generator $\omega^n$, i.e.,
\begin{equation}\label{eq:functional-equation-omega}
\bF[t]\cdot \omega^n =  \left\{ f\in \CI\cs{t} \,\middle|\, (t-\theta)^n f-f^{(1)}=0 \right\} . 
\end{equation}

\subsection{Integral $t$-motivic cohomology and class group}
Let $\underline{M}$ be a rigid analytically trivial $t$-motive over $A=\bF[\theta]$. The exactness of the Betti realization functor \cite[Cor. 3.17]{gazda2} induces an $\bF[t]$-linear morphism
\begin{equation}\label{eq:rB}
\Ext^{1,\operatorname{reg}}_A(\mathbbm{1},\underline{M})\longrightarrow \operatorname{H}^1(G_\infty,\underline{M}_B).
\end{equation}
The following definitions were made in \cite{gazda2}:
\begin{Definition}
\begin{enumerate}[label=$-$]
\item An extension of $\mathbbm{1}$ by $\underline{M}$ has \emph{analytic reduction at $\infty$} if it splits as a continuous representation of $G_\infty$; i.e. belongs to the kernel of \eqref{eq:rB}. We denote by $\Ext^{1}_{A,\infty}(\mathbbm{1},\underline{M})$ this kernel and by $\Ext^{1,\operatorname{reg}}_{A,\infty}(\mathbbm{1},\underline{M})$ the submodule of regulated extensions.
\item The cokernel of \eqref{eq:rB}, denoted $\operatorname{Cl}(\underline{M})$, is called \emph{the class module of $\underline{M}$}.
\end{enumerate}
\end{Definition}

The main tool to access these two modules is a long exact sequence which we recall next. Let $G_{\underline{M}}$ be the following complex of $\bF[t]$-modules sitting in degrees $\{0,1\}$:
\begin{equation}\label{eq:GM}
G_{\underline{M}}=\left[\frac{M\otimes_{A[t]} \KI\langle t \rangle}{M}\xrightarrow{\id-\tau_M} \frac{M\otimes_{A[t]} \KI\langle t \rangle}{M+\tau_M(\tau^*M)} \right]
\end{equation}
and where the boundary map is induced from $m\mapsto m-\tau_M(\tau^*m)$. This complex was introduced in \cite[Def. 4.5]{gazda2} as a main ingredient in the proof of the finite generation of $\Ext^{1,\operatorname{reg}}_{A,\infty}(\mathbbm{1},\underline{M})$ and $\operatorname{Cl}(\underline{M})$. The next proposition was proved in Prop. 4.6, \emph{loc.\,cit.}; we extract the statement of interest from the latter reference and its proof:
\begin{Proposition}\label{prop:description-rB}
There is a long exact sequence of $\bF[t]$-modules:
\begin{equation*}
\underline{M}_B^+\longrightarrow \operatorname{H}^0(G_{\underline{M}}) \stackrel{b}{\longrightarrow} \Ext^{1,\operatorname{reg}}_A(\mathbbm{1},\underline{M}) \stackrel{a}{\longrightarrow} \operatorname{H}^1(G_\infty,\underline{M}_B) \longrightarrow \operatorname{H}^1(G_{\underline{M}})\longrightarrow 0
\end{equation*}
where $a$ denotes the map \eqref{eq:rB}. In particular, $\operatorname{Cl}(\underline{M})$ is isomorphic to $\operatorname{H}^1(G_{\underline{M}})$. In addition, the composition $a\circ b$ maps the class of a representative $\xi\in M\otimes_{A[t]}\KI\langle t \rangle$ to the class of the cocycle $G_\infty\to \underline{M}_B$, $\sigma\mapsto \,^{\sigma}\xi-\xi$. 
\end{Proposition}

We make profit of the latter sequence to derive explicit formulas for the integral $t$-motivic cohomology in the situation of Carlitz twists. Let $n\in \bZ$ and let $\Delta=\Delta_n:\KI\langle t \rangle\to \KI\langle t \rangle$ be the $\bF[t]$-linear $\tau$-difference operator:
\begin{equation}\label{eq:Delta}
\Delta f(t)= f(t)-\frac{f(t)^{(1)}}{(t-\theta)^n}. 
\end{equation}
\begin{Corollary}\label{cor:formula-for-carlitz}
There is a natural isomorphism of $\bF[t]$-modules:
\[
\Ext^{1,\operatorname{reg}}_{A,\infty}(\bbone,\underline{A}(n))\cong \frac{\{\xi\in \KI\langle t \rangle ~|~\Delta \xi \in (t-\theta)^{\min\{0,-n\}}\bF[\theta,t]\}}{\bF[\theta,t]+\mathbf{1}_{q-1|n}\bF[t]\omega(t)^n},
\]
where $\mathbf{1}_{q-1|n}$ equals $1$ if $q-1$ divides $n$, and $0$ otherwise. For $n> 0$, the rank of $\Ext^{1,\operatorname{reg}}_{A,\infty}(\bbone,\underline{A}(n))$ is $n$ if $q-1\nmid n$ and is $n-1$ otherwise.
\end{Corollary}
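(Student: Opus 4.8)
The plan is to specialize Proposition~\ref{prop:formula-of-interest} to $\underline{M}=\underline{A}(n)$. Concretely, I would assemble the four inputs that proposition requires — the maximal integral model $M_A$, the module $N_A=(M+\tau_M(\tau^*M))\cap M_A[(t-\theta)^{-1}]$, the Betti realization $\underline{A}(n)_B$ together with its $G_\infty$-fixed submodule $\underline{A}(n)_B^+$, and the weight of $\underline{A}(n)$ — and then substitute. The only genuinely non-formal manipulation will be a rescaling by $(t-\theta)^n$ of the $\tau$-difference condition.

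First I would pin down $M_A$. Inside $M=K[t]$ the naive model $\bF[\theta,t]$ is a full-rank $\bF[\theta,t]$-lattice, and I claim it is the maximal integral model of $\underline{A}(n)$ in the sense of \cite[Def.~4.16]{gazda1}. This is the one point where an external input is used; I expect it to be routine, since $\underline{A}(n)$ is, up to duality, an internal tensor power of the Carlitz motive — whose integral structure is explicit — and maximal integral models behave well under such operations. Granting $M_A=\bF[\theta,t]$, the computation of $N_A$ is short: $\tau^*M$ is free of rank one on $\tau^*1$, with $\tau_M(\tau^*1)=(t-\theta)^{-n}$, whence $M+\tau_M(\tau^*M)=(t-\theta)^{\min\{0,-n\}}K[t]$; intersecting with $\bF[\theta,t][(t-\theta)^{-1}]$ and using that $t-\theta$ is prime in the UFD $\bF[\theta,t]$ and primitive over $\bF[\theta]$, so that $K[t]\cap\bF[\theta,t][(t-\theta)^{-1}]=\bF[\theta,t]$ (and likewise after multiplying by powers of $t-\theta$), gives $N_A=(t-\theta)^{\min\{0,-n\}}\bF[\theta,t]$.

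Next I would treat the Betti side. Equation~\eqref{eq:functional-equation-omega} identifies $\underline{A}(n)_B$ with $\bF[t]\,\omega^n$. Now $G_\infty$ fixes $\bF[t]$ and acts on $\omega$ only through $\lambda_\theta$, and the restriction $G_\infty\to\Gal(\KI(\lambda_\theta)|\KI)$ surjects onto a group of order $q-1$ acting simply transitively on the roots of $X^{q-1}+\theta$; hence each $\sigma$ carries $\omega^n$ to $\zeta_\sigma^{\,n}\omega^n$ with $\zeta_\sigma$ running through all of $\bF^\times$, so $\omega^n$ is $G_\infty$-invariant exactly when $q-1\mid n$, giving $\underline{A}(n)_B^+=\mathbf{1}_{q-1\mid n}\,\bF[t]\,\omega^n$ (and when $q-1\mid n$ one also has $\omega^n\in\KI\langle t\rangle$ since $\omega^{q-1}\in\KI\langle t\rangle$, so the quotient below is well posed). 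Substituting $M_A$, $N_A$ and $\underline{A}(n)_B^+$ into Proposition~\ref{prop:formula-of-interest} then yields the two isomorphisms for all $n\in\bZ$, once one rewrites the cohomological defining condition: for $\xi\in\KI\langle t\rangle$, multiplying $\xi-\tau_M(\tau^*\xi)=\xi-(t-\theta)^{-n}\xi^{(1)}\in N_A$ by $(t-\theta)^n$ — a bijection of $\KI\langle t\rangle[(t-\theta)^{-1}]$ carrying $N_A$ onto $(t-\theta)^{\min\{0,n\}}\bF[\theta,t]$ — turns it into $(t-\theta)^n\xi-\xi^{(1)}\in(t-\theta)^{\min\{0,n\}}\bF[\theta,t]$; the class module formula needs no such rewriting.

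Finally, the rank for $n>0$. The motive $\underline{A}(n)$ is pure, of weight $-n$, which is negative precisely for $n>0$, so the rank formula of Proposition~\ref{prop:formula-of-interest} applies and gives $\operatorname{rank}_{\bF[t]}(N_A/M_A)-\operatorname{rank}_{\bF[t]}(\underline{A}(n)_B^+)$. One computes $N_A/M_A\cong(t-\theta)^{-n}\bF[\theta,t]/\bF[\theta,t]\cong\bF[\theta,t]/(t-\theta)^n$, which is $\bF[t]$-free of rank $n$ because $(\theta-t)^n$ is monic of degree $n$ in $\theta$ over $\bF[t]$, while $\operatorname{rank}_{\bF[t]}(\underline{A}(n)_B^+)=\mathbf{1}_{q-1\mid n}$; hence the rank is $n-\mathbf{1}_{q-1\mid n}$, i.e.\ $n$ if $q-1\nmid n$ and $n-1$ otherwise. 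The main obstacle throughout is the single external fact $M_A=\bF[\theta,t]$; granted that, everything else is bookkeeping with the formulas of Proposition~\ref{prop:formula-of-interest}.
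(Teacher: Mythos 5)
Your proposal is correct and follows essentially the same route as the paper: take $M_A=\bF[\theta,t]$, compute $N_A=(t-\theta)^{\min\{0,-n\}}\bF[\theta,t]$, identify $\underline{A}(n)_B^+=\mathbf{1}_{q-1\mid n}\bF[t]\,\omega^n$, rescale the $\tau$-difference condition by $(t-\theta)^n$, and read off the rank from $\operatorname{rank}(N_A/M_A)-\operatorname{rank}(\underline{A}(n)_B^+)$ using purity of weight $-n$. The only (minor) variant is the justification of $\underline{A}(n)_B^+$: you compute the $\Gal(\KI(\lambda_\theta)|\KI)$-action on $\omega^n$ explicitly, while the paper uses the equivalent observation that $\omega^n\in\KI\langle t\rangle$ iff $q-1\mid n$.
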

\begin{proof}
Set $\underline{M}=\underline{A}(n)$. We have $\underline{A}(n)_B=\bF[t]\omega(t)^n$ where $\omega(t)^n\in \KI\langle t \rangle$ if, and only if, $q-1|n$. That is, $\underline{A}(n)_B^+=\mathbf{1}_{q-1|n}\bF[t]\omega(t)^n$. On the other-hand, from $M+\tau_M(\tau^*M)=(t-\theta)^{\min\{0,-n\}}\bF[\theta,t]$, we obtain:
\begin{equation}\label{eq:GM-for-carlitz}
G_{\underline{A}(n)}=\left[\frac{\KI\langle t\rangle}{\bF[\theta,t]}\stackrel{\Delta}{\longrightarrow} \frac{\KI\langle t\rangle}{(t-\theta)^{\min\{0,-n\}}\bF[\theta,t]}\right].
\end{equation}
The formula is thus deduced from Proposition \ref{prop:description-rB}.\\
Now assume $n>0$. The assertion on the rank follows from comparison with extensions of Hodge-Pink structures; more precisely, \cite[Thm 4.4]{gazda2} states that the rank of $\Ext^{1,\operatorname{reg}}_{A,\infty}(\bbone,\underline{A}(n))$ equals the dimension over $\KI=\bF(\!(1/t)\!)$ of a vector space of extensions of mixed Hodge-Pink structures:
\begin{equation}\label{eq:hdg-space}
\Ext^{1,\operatorname{ha}}_{\mathbf{HPk}^+_{\KI},\infty}(\mathbbm{1}^+,\underline{H}^+)
\end{equation}
(the reader is invited to refer to \emph{loc.\,cit.} for definitions and notations). Here, $\underline{H}^+$ is the Hodge-Pink structure attached to $\underline{A}(n)$; we are in the range of application of the quoted theorem as $\underline{A}(n)$ is pure of weight $-n<0$. From Thm. 3.28 in \emph{loc.\,cit.}, \eqref{eq:hdg-space} inserts an exact sequence of $\KI$-vector spaces:
\[
0\longrightarrow \underline{A}(n)^+_B\otimes_{\bF[t]}\KI\longrightarrow \frac{(t-\theta)^{-n}\KI[\![t-\theta]\!]}{\KI[\![t-\theta]\!]}\longrightarrow \Ext^{1,\operatorname{ha}}_{\mathbf{HPk}^+_{\KI},\infty}(\mathbbm{1}^+,\underline{H}^+)\longrightarrow 0
\]
from which the dimension of \eqref{eq:hdg-space} is easily computed. 
\end{proof}

We can already detect from Corollary \ref{cor:formula-for-carlitz} that the study of the integral $t$-motivic cohomology and that of the class module will require to work with the family of $\tau$\nobreakdash-difference equations 
\[
\Delta \xi=\xi-\frac{\xi^{(1)}}{(t-\theta)^n} \in (t-\theta)^{\min\{0,-n\}}\bF[\theta,t].
\]
In fact, most of what we will discuss next will simply require these formulas and no longer refer to \cite{gazda2}. In that sense, once one agrees with Corollary \ref{cor:formula-for-carlitz}, Sections \ref{sec:non-negative-twists} and \ref{sec:positive-twists} are mostly self contained.

\section{Non-positive twists}\label{sec:non-negative-twists}
Let $n\geq 0$ be an integer. In this section, we prove Theorems \ref{thm:polylogclass}, \ref{thm:mot-coh-car} and \ref{thm:rk-Cl} for $\underline{A}(-n)$. As computations are sensitive to the sign of the twist, we postpone the positive case to the next section.

\subsection{Twist zero}
For $n=0$, the twist $\underline{A}(n)$ equals the \emph{neutral $t$-motive} $\bbone$. In this case, we have the following.

\begin{Theorem}\label{thm:twist-zero}
The extension module $\Ext^{1,\operatorname{reg}}_{A,\infty}(\bbone,\bbone)$ and the class module $\operatorname{Cl}(\bbone)$ are both~trivial.
\end{Theorem}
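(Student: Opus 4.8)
The plan is simply to unwind the two descriptions furnished by Corollary~\ref{cor:formula-for-carlitz} at $n=0$ and to verify that both quotients collapse. Since $q-1\mid 0$ and $\omega(t)^0=1$, the denominator in the first isomorphism is $\bF[\theta,t]+\bF[t]=\bF[\theta,t]$, so the statement about the extension module amounts to the equality
\[
\left\{\xi\in\KI\langle t\rangle~\middle|~\xi-\xi^{(1)}\in\bF[\theta,t]\right\}=\bF[\theta,t];
\]
likewise, as $(t-\theta)^{\min\{0,0\}}=1$, the statement about $\operatorname{Cl}(\bbone)$ reduces to
\[
\KI\langle t\rangle=\bF[\theta,t]+\left\{f-f^{(1)}~\middle|~f\in\KI\langle t\rangle\right\}.
\]

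Both rest on one elementary observation, which I would record first: the decomposition $\KI\langle t\rangle=\bF[\theta,t]\oplus B$, where $B=\{f\in\KI\langle t\rangle\mid\norm f<1\}$ is the open unit ball. Indeed the splitting $\KI=\bF[\theta]\oplus\mI$ (polynomial part plus $\mI=\theta^{-1}\bF\ps{\theta^{-1}}$) is norm-non-increasing on each summand; applying it coefficientwise to $\xi=\sum_i a_i t^i\in\KI\langle t\rangle$ and using $\abs{a_i}\to 0$, only finitely many of the $\bF[\theta]$-parts are nonzero (a nonzero element of $\bF[\theta]$ has norm $\geq 1$), so these assemble into a genuine element of $\bF[\theta,t]$ while the remaining series lands in $B$. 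Alongside this I would note the two trivial facts that $\norm{f^{(1)}}=\norm f^{q}$ and that an element of $\bF[\theta,t]$ of Gauss norm $<1$ vanishes.

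For the class module, given $g\in\KI\langle t\rangle$ decompose $g=P+h$ with $P\in\bF[\theta,t]$ and $h\in B$. Since $\norm{h^{(k)}}=\norm h^{q^k}\to 0$, the series $f:=\sum_{k\geq 0}h^{(k)}$ converges in the complete algebra $\KI\langle t\rangle$ and satisfies $f-f^{(1)}=h$; hence $g=P+(f-f^{(1)})$ lies in $\bF[\theta,t]+(\id-\tau)\KI\langle t\rangle$, which gives $\operatorname{Cl}(\bbone)=0$. For the extension module, take $\xi\in\KI\langle t\rangle$ with $\xi-\xi^{(1)}\in\bF[\theta,t]$ and write $\xi=P+m$ with $P\in\bF[\theta,t]$, $m\in B$. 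Then $m-m^{(1)}=(\xi-\xi^{(1)})-(P-P^{(1)})\in\bF[\theta,t]$ while $\norm{m-m^{(1)}}\leq\norm m<1$, so $m=m^{(1)}$. By the functional equation \eqref{eq:functional-equation-omega} with $n=0$ this forces $m\in\bF[t]\omega(t)^0=\bF[t]$, and since a nonzero element of $\bF[t]$ has Gauss norm $1$ whereas $\norm m<1$, we conclude $m=0$; thus $\xi=P\in\bF[\theta,t]$.

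I do not expect a genuine obstacle in this base case; the content is entirely the bookkeeping above. The only two points that merit a little care are the decomposition $\KI\langle t\rangle=\bF[\theta,t]\oplus B$ — in particular that the ``polynomial part'' really is a polynomial in $t$, not merely a power series — and the fact that the geometric-series solution $\sum_k h^{(k)}$ of the twist equation converges inside $\KI\langle t\rangle$, and not only in the larger algebra $\CI\langle t\rangle$.
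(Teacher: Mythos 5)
Your argument is correct and is essentially the $n=0$ specialization of the paper's general machinery: the paper itself proves this theorem only by citation, pointing to Theorem~\ref{thm:revelo} (whose Lemma~\ref{lem:kernel-beta} uses the same decomposition $\KI\langle t\rangle=\bF[\theta,t]\oplus\fm_\infty\langle t\rangle$) for the extension module and to Proposition~\ref{prop:class-module-n-pos} (whose series $s_h$ is exactly your $\sum_k h^{(k)}$ at $n=0$) for the class module. The only cosmetic difference is that you finish the extension-module step by invoking the functional equation~\eqref{eq:functional-equation-omega} to get $m\in\bF[t]$ and then a norm bound, whereas Lemma~\ref{lem:kernel-beta} instead evaluates at $t=0$; both are fine and rest on the same decomposition.
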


\begin{proof}
The proof is covered from various stages of the next two subsections, so we simply give references. The result for the integral $t$-motivic cohomology is a special case of Theorem \ref{thm:revelo} below, whereas we get the result for the class module as a special case of Proposition \ref{prop:class-module-n-pos}.\footnote{One could obtain the result for the class module also as a special case of $n\leq 0$, but has to be aware that in this special case, the kernel determined in Lemma \ref{lem:kernel-beta} is also trivial, since $\bF[t]\cdot \nu_0=\bF[t]\subseteq \bF[\theta,t]$.}
\end{proof}

\subsection{Negative twists}
In this subsection, we consider the case of the $t$-motive $\underline{A}(-n)$ over $A$ for $n> 0$.
In the part on the integral $t$-motivic cohomology, however, we also allow $n=0$ to cover the case of twist zero.

\subsubsection*{Determination of integral $t$-motivic cohomology}

\begin{Theorem}\label{thm:revelo}
For $n\geq 0$, the module $\Ext^{1,\operatorname{reg}}_{A,\infty}(\bbone,\underline{A}(-n))$ is trivial.
\end{Theorem}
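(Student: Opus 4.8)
The plan is to use the formula from Corollary~\ref{cor:formula-for-carlitz}: for $n \geq 0$, we must show that
\[
\Ext^{1,\operatorname{reg}}_{A,\infty}(\bbone,\underline{A}(-n))\cong \frac{\{\xi\in \KI\langle t \rangle ~|~(t-\theta)^{-n}\xi-\xi^{(1)}\in \bF[\theta,t]\}}{\bF[\theta,t]+\mathbf{1}_{q-1\mid n}\bF[t]\omega(t)^{-n}}
\]
vanishes. Note that for $n \geq 0$ the function $\omega^{-n}$ lies in $\KI\langle t\rangle$ only when $q-1 \mid n$, and the condition $(t-\theta)^{-n}\xi - \xi^{(1)} \in \bF[\theta,t]$ forces $\xi \equiv (t-\theta)^n \xi^{(1)} \pmod{(t-\theta)^n\bF[\theta,t]}$. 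So the first step is to show that \emph{every} $\xi$ satisfying the difference condition can be written, modulo $\bF[\theta,t]$, as an element of $\bF[t]\omega(t)^{-n}$ (and in particular, when $q-1\nmid n$, the numerator coincides with $\bF[\theta,t]$ itself, giving the trivial quotient).

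First I would analyze the $\tau$-difference equation. Write $g := (t-\theta)^{-n}\xi - \xi^{(1)} \in \bF[\theta,t]$; rearranging gives $\xi = (t-\theta)^n(\xi^{(1)} + g)$. The key analytic input is a convergence/uniqueness argument in $\KI\langle t\rangle$: iterating the relation $\xi = (t-\theta)^n \xi^{(1)} + (t-\theta)^n g$ and using that the operator $\eta \mapsto (t-\theta)^n \eta^{(1)}$ is contracting in a suitable sense on $\KI\langle t\rangle$ near the "boundary" (since raising to the $q$-th power and multiplying by $(t-\theta)^n$ increases valuation in $1/\theta$ appropriately for the tail), one should obtain that the homogeneous solutions in $\KI\langle t\rangle$ of $\xi = (t-\theta)^n\xi^{(1)}$ form exactly $\bF[t]\omega^{-n}$ when $q-1\mid n$ and are zero otherwise — this is essentially \eqref{eq:functional-equation-omega} rewritten, after clearing denominators. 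Then I would argue that the affine equation $\xi - (t-\theta)^n\xi^{(1)} = (t-\theta)^n g$ with $g \in \bF[\theta,t]$ admits a \emph{polynomial} particular solution: indeed one can solve it degree-by-degree in $t$, or observe that the right-hand side lies in $\bF[\theta,t]$ and the operator $\id - (t-\theta)^n\tau$ is, after a suitable filtration by $\theta$-degree, triangular and invertible on polynomials. Hence $\xi = \xi_{\text{poly}} + (\text{element of }\bF[t]\omega^{-n})$, which is precisely the denominator, so the quotient is zero.

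The step I expect to be the main obstacle is the surjectivity claim — that the affine equation $\xi - (t-\theta)^n\xi^{(1)} = h$ has a solution in $\bF[\theta,t]$ for every $h \in (t-\theta)^n\bF[\theta,t]$, or more precisely that one can reduce an arbitrary $\KI\langle t\rangle$-solution of the difference condition to such a polynomial solution plus a Betti class. The subtlety is that $\tau$ raises $\theta$-degrees multiplicatively while $(t-\theta)^n$ only shifts them additively, so the naive triangular induction runs in the "wrong" direction; one must instead iterate toward the analytic solution and check that the difference between the given $\xi$ and the polynomial solution is a genuine element of $\KI\langle t\rangle$ solving the homogeneous equation, hence is in $\bF[t]\omega^{-n}$. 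Making the contraction estimate precise — identifying the right Gauss-norm bound under which $\eta \mapsto (t-\theta)^n\eta^{(1)}$ (or its inverse branch) converges — is where the real work lies; everything else is bookkeeping with Corollary~\ref{cor:formula-for-carlitz} and \eqref{eq:functional-equation-omega}.
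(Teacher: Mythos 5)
Your proposal has two problems, one minor (a misquote of Corollary~\ref{cor:formula-for-carlitz}) and one fatal (the claimed existence of a polynomial particular solution is simply false, and no contraction estimate can rescue it).

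On the misquote: for $\underline{A}(-n)$, $n\geq 0$, the condition in Corollary~\ref{cor:formula-for-carlitz} is $(t-\theta)^{-n}\xi-\xi^{(1)}\in (t-\theta)^{\min\{0,-n\}}\bF[\theta,t]=(t-\theta)^{-n}\bF[\theta,t]$, which after clearing denominators reads $\xi-(t-\theta)^n\xi^{(1)}\in\bF[\theta,t]$. You wrote $(t-\theta)^{-n}\xi-\xi^{(1)}\in\bF[\theta,t]$, and consequently placed a spurious extra factor $(t-\theta)^n$ on the right side of your affine equation. This doesn't save the argument, but it does mean you are analysing the wrong equation.

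On the fatal gap: your key step is the claim that $\xi - (t-\theta)^n\xi^{(1)}=h$ has a polynomial solution for polynomial $h$, from which you deduce that any $\KI\langle t\rangle$-solution is polynomial plus a homogeneous solution. This claim is false. Take $q=2$, $n=1$, $h=1$: writing $\xi=\sum_k c_k(t)\theta^k$, the coefficient of $\theta^0$ in $\xi-(t-\theta)\xi^{(1)}$ is $c_0(1-t)$, forcing $c_0=1/(1-t)\notin\bF[t]$, so there is no polynomial solution. Nor can the iteration you propose converge: the partial sums $\sum_{k\geq 0}(t-\theta)^n\cdots(t-\theta^{q^{k-1}})^n h^{(k)}$ have $k$-th term of Gauss norm $q^{n(q^k-1)/(q-1)}\norm{h}^{q^k}$, which tends to $0$ only when $\norm{h}<q^{-n/(q-1)}$, whereas any nonzero $h\in\bF[\theta,t]$ has $\norm{h}\geq 1$. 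The ``triangular in $\theta$-degree'' heuristic also fails because the degree map $e\mapsto n+qe$ is expanding, not nilpotent, and is not even surjective onto $\bZ_{\geq 0}$. You flag this as ``the main obstacle'' and ``where the real work lies,'' but it is not a technical estimate that can be sharpened: the premise is false, so the route is blocked. What the theorem actually asserts is the converse inclusion, that the constants $h$ arising from $\KI\langle t\rangle$-solutions already arise from polynomial solutions; one must exploit the constraint $\xi\in\KI\langle t\rangle$, not manufacture a polynomial solution from scratch.

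The paper's route is quite different. It introduces the flattened generator $\nu_n=\lambda_\theta^\delta\omega^{-n}\in\KI\langle t\rangle$ and multiplies the decomposition $\KI\langle t\rangle=\bF[\theta,t]\oplus\fm_\infty\langle t\rangle$ by $\nu_n$, yielding a finer decomposition of $\fm_\infty\langle t\rangle$ as $\bF[\theta,t]_{\deg_\theta<h}\cdot\nu_n\oplus\fm_\infty\langle t\rangle\cdot\nu_n$. The crucial observation (Lemma~\ref{lem:beta}) is that $\beta(f\nu_n)=(f-(-\theta)^\delta f^{(1)})\nu_n$, so conjugating by $\nu_n$ replaces the awkward operator $\id-(t-\theta)^n\tau$ by the elementary $\id-(-\theta)^\delta\tau$, which is an isomorphism on the $\fm_\infty\langle t\rangle\cdot\nu_n$ piece. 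Lemma~\ref{lem:kernel-beta} then finishes by writing a putative kernel element as $P(t)\nu_n+g(t)\nu_n$ and evaluating the equation at $t=0$: a valuation argument kills $g(0)$, and a degree count in $\theta$ kills $P(0)$ (after an $\bF[t]$-adjustment by $\nu_n$ when $q-1\mid n$). None of this is an iteration or contraction argument; it is a structural decomposition plus pointwise evaluation. Your contraction heuristic only controls the regime $\norm{\xi}<q^{-n/(q-1)}$, and the content of the lemma lies precisely in the intermediate range $q^{-h}\leq\norm{\xi}<1$, which is handled by the $\bF[\theta,t]_{\deg_\theta<h}\cdot\nu_n$ component and is invisible to your approach.
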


Before engaging in the proof of Theorem \ref{thm:revelo}, we set a few notations that we shall use again later on. First write $n=h(q-1)+\delta$ using Euclidean division of $n$ by $q-1$. In particular $0\leq \delta<q-1$. Next, we introduce the following \emph{flattened} version of $\omega^{-n}$:
\[
\nu_n:=(-\theta)^{-h}\prod_{j=0}^\infty \left(1-\frac{t}{\theta^{q^j}}\right)^{n}=\lambda_\theta^{\delta} \omega^{-n}.
\]
The element $\nu_n$ belongs to $\KI\langle t \rangle$ by choice of $\delta$ and is again invertible in $\TT$. \\

In virtue of Proposition \ref{prop:description-rB}, we have to determine the cohomology of:
\[
G_{\underline{A}(-n)}=\left[\frac{\KI\langle t \rangle}{\bF[\theta,t]}\stackrel{\Delta}{\longrightarrow} \frac{\KI\langle t \rangle}{\bF[\theta,t]}\right]
\]
(see \eqref{eq:GM-for-carlitz}). Observe that $\KI\langle t \rangle=\bF[\theta,t]\oplus \fm_\infty\langle t \rangle$, where we denote by $\fm_\infty\langle t \rangle$ the sub\nobreakdash-$\bF[t]$\nobreakdash-module of $\KI\langle t \rangle$ consisting of elements having Gauss norm less than $1$. Multiplying this decomposition by $\nu_n$ provides a new decomposition:
\begin{equation}\label{eq:decomposition-TateAlgebra}
\TT =  \bF[\theta,t]\cdot \nu_n\oplus \fm_\infty\cs{t}\cdot \nu_n.
\end{equation}
We also have:
\begin{equation}\label{eq:decomposition-TateAlgebra-ofnorm<1}
\fm_\infty\langle t \rangle =  \bF[\theta,t]_{\deg_\theta<h}\cdot \nu_n\oplus \fm_\infty\cs{t}\cdot \nu_n.
\end{equation}

We shall use the $\tau$-difference operator $\Delta=\Delta_{-n}$ introduced in \eqref{eq:Delta}. We begin with an easy observation:
\begin{Lemma}\label{lem:beta}
If $f\in \KI\cs{t}$, then $\Delta(f\cdot \nu_n)=(f-(-\theta)^{\delta}f^{(1)})\cdot \nu_n$. In particular, $\Delta$~preserves the decomposition \eqref{eq:decomposition-TateAlgebra} and acts as an isomorphism on the summand \mbox{$\fm_\infty \cs{t}\cdot \nu_n$}. 
\end{Lemma}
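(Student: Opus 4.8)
The plan is to first establish a functional equation for $\nu_n$ and then reduce the whole statement to a contraction‑mapping argument on $\fm_\infty\langle t\rangle$. I would begin by computing $\nu_n^{(1)}$ directly from the product expansion $\nu_n=(-\theta)^{-h}\prod_{j\geq 0}(1-t\theta^{-q^j})^n$: applying the twist shifts the product index by one and replaces $\theta$ by $\theta^q$ in the leading scalar, so that $\nu_n^{(1)}=(-\theta^q)^{-h}\prod_{j\geq 1}(1-t\theta^{-q^j})^n=(-\theta)^{-h(q-1)}(1-t/\theta)^{-n}\nu_n=(-\theta)^{\delta}(t-\theta)^{-n}\nu_n$, where I use $n-h(q-1)=\delta$ together with $(1-t/\theta)^{-n}=(-\theta)^n(t-\theta)^{-n}$. (Equivalently this can be deduced from $(\omega^{-n})^{(1)}=(t-\theta)^{-n}\omega^{-n}$, a consequence of \eqref{eq:functional-equation-omega}, and $\lambda_\theta^{q-1}=-\theta$.) Substituting into the definition of $\beta$ then gives, for $f\in\KI\langle t\rangle$,
\[
\beta(f\cdot\nu_n)=f\nu_n-(t-\theta)^nf^{(1)}\nu_n^{(1)}=\bigl(f-(-\theta)^{\delta}f^{(1)}\bigr)\nu_n,
\]
which is the first assertion.

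For the second part I would introduce $\sigma:=\id-(-\theta)^{\delta}\tau$ acting on $\KI\langle t\rangle$, so that $\beta(f\cdot\nu_n)=\sigma(f)\cdot\nu_n$; after dividing the decomposition \eqref{eq:decomposition-TateAlgebra} through by $\nu_n$, it then suffices to show that $\sigma$ stabilises both $\bF[\theta,t]$ and $\fm_\infty\langle t\rangle$ and restricts to an automorphism of the latter. Stability of $\bF[\theta,t]$ is immediate. For $\fm_\infty\langle t\rangle$ the crucial point is a Gauss‑norm estimate: if $\norm{f}<1$ then in fact $\norm{f}\leq q^{-1}$, since the value group of $\KI$ is $q^{\bZ}$, whence $\norm{(-\theta)^{\delta}f^{(1)}}=q^{\delta}\norm{f}^{q}\leq q^{\delta-q}<1$ because $0\leq\delta\leq q-2$; thus $\sigma(f)\in\fm_\infty\langle t\rangle$.

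Finally, the same estimate iterated shows that $(-\theta)^{\delta}\tau$ is a contraction on the (complete, being closed in $\KI\langle t\rangle$) module $\fm_\infty\langle t\rangle$: one has $((-\theta)^{\delta}\tau)^{k}(g)=(-\theta)^{\delta(q^{k}-1)/(q-1)}g^{(k)}$, of Gauss norm at most $q^{\delta(q^{k}-1)/(q-1)-q^{k}}$, which tends to $0$ since $\delta/(q-1)<1$. Hence the Neumann series $\sum_{k\geq 0}((-\theta)^{\delta}\tau)^{k}$ converges in operator norm and furnishes a two‑sided inverse of $\sigma$ on $\fm_\infty\langle t\rangle$, completing the proof that $\beta$ restricts to an isomorphism of $\fm_\infty\langle t\rangle\cdot\nu_n$. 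I do not expect a genuine obstacle here; everything is governed by the single inequality $\delta\leq q-2$, and the only subtlety worth flagging is the discreteness of the valuation — it is precisely the implication $\norm{f}<1\Rightarrow\norm{f}\leq q^{-1}$ that makes the naive estimate on $(-\theta)^{\delta}\tau$ go through.
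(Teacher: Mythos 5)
The paper leaves Lemma \ref{lem:beta} unproved, introducing it as ``an easy observation,'' so there is no written proof to compare against; your argument is the natural one and is correct. The functional equation $\nu_n^{(1)}=(-\theta)^{\delta}(t-\theta)^{-n}\nu_n$ is precisely what the identity rests on, and the ultrametric Neumann-series argument (hinging, as you rightly flag, on the discreteness $\norm{f}<1\Rightarrow\norm{f}\leq q^{-1}$ and on $\delta\leq q-2$) correctly establishes that $\sigma=\id-(-\theta)^\delta\tau$ is bijective on $\fm_\infty\langle t\rangle$, hence that $\beta$ is an isomorphism on $\fm_\infty\langle t\rangle\cdot\nu_n$.
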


The main ingredient in the proof of Theorem \ref{thm:revelo} is contained in the following:
\begin{Lemma}\label{lem:kernel-beta}
$\operatorname{H}^0(G_{\underline{A}(-n)})$ is trivial if $q-1\nmid n$ and is $(\bF[t]\cdot \nu_n +\bF[\theta,t])/\bF[\theta,t]$ otherwise. 
\end{Lemma}

\begin{proof}
Let $\xi\in \KI\langle t \rangle$ be the lift of a non zero element in the kernel. Up to subtracting a certain element of $\bF[\theta,t]$ from $\xi$, we may and will assume that $\xi\in \fm_\infty\langle t \rangle$. By mean of the decomposition \eqref{eq:decomposition-TateAlgebra-ofnorm<1}, we write $\xi=P(t)\nu_n+g(t)\nu_n$ for $P(t)\in \bF[\theta,t]_{\deg_\theta < h}$ and $g(t)\in \fm_\infty\cs{t}$. After dividing $\xi$ by a suitable power of $t$, we can further assume that $P(0)\ne 0$ or $g(0)\ne 0$.

By assumption, there exists $a(t)\in \bF[\theta,t]$ such that
\[ \Delta\left(P(t)\nu+g(t)\nu\right)=a(t)\in \bF[\theta,t]. \]
Using Lemma \ref{lem:beta} and evaluating the equation at $t=0$ leads to:
\[ (P(0)-P(0)^q\cdot (-\theta)^{\delta})+(g(0)-g(0)^q\cdot (-\theta)^{\delta}) = a(0)\cdot (-\theta)^h\in \bF[\theta]. \]
As $(P(0)-P(0)^q\cdot (-\theta)^{\delta})\in \bF[\theta]$ and $(g(0)-g(0)^q\cdot (-\theta)^{\delta})\in \fm_\infty$, this implies:
\begin{equation}\label{eq:eq-for-g}
    g(0)-g(0)^q\cdot (-\theta)^{\delta}=0
\end{equation} 
and
\begin{equation}\label{eq:eq-for-P}
P(0)-P(0)^q\cdot (-\theta)^{\delta}=a(0)\cdot (-\theta)^h.
\end{equation}
From equation \eqref{eq:eq-for-g}, we obtain
$\abs{g(0)}=q^{-\frac{\delta}{q-1}}>q^{-1}$ a contradiction, unless $g(0)=0$.

For $P(0)$ -- which we now can assume to be non-zero -- we consider two cases:\\
If $q-1\nmid n$, i.e. $\delta>0$, the vanishing order at $\theta=0$ of $P(0)^q\cdot (-\theta)^{\delta}$ is strictly bigger than that of $P(0)$, and hence by Equation \eqref{eq:eq-for-P}, the vanishing order at $\theta=0$ of $P(0)$ has to be at least $h$, contradicting that $P(0)$ has degree less than $h$. Hence the map is injective. \\
If $q-1|n$, i.e. $\delta=0$, then we can assume that $P(0)$ vanishes at $\theta=0$ up to subtracting from $\xi$ a suitable $\bF[t]$-multiple of $\nu_n=\omega^{-n}$. Yet if the difference is non zero, then we get a contradiction in the same manner as in the case $\delta>0$.
\end{proof}

\begin{proof}[Proof of Theorem \ref{thm:revelo}]
Let $\xi\in \KI\langle t \rangle$ be such that $\xi-(t-\theta)^n \xi^{(1)}\in \bF[\theta,t]$. By Lemma \ref{lem:kernel-beta}, $\xi$ belongs to $\bF[\theta,t]+\mathbf{1}_{q-1|n}\bF[t]\omega(t)^{-n}$. Therefore, Corollary \ref{cor:formula-for-carlitz} implies that $\Ext^{1,\operatorname{reg}}_{A,\infty}(\bbone,\underline{A}(-n))=(0)$.
\end{proof}

\subsubsection*{Determination of the rank of the class module}
We now turn to the determination of the rank of the class module $\operatorname{Cl}(\underline{A}(-n))$ for $n>0$.
\begin{Theorem}\label{thm:class-module-n-neg}
For $n>0$, the rank of $\operatorname{Cl}(\underline{A}(-n))$ is $0$ if $q-1\nmid n$ and is $1$ if $q-1|n$.
\end{Theorem}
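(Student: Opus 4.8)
The plan is to work directly with the formula for $\operatorname{Cl}(\underline{A}(-n))$ supplied by Corollary \ref{cor:formula-for-carlitz}. Since $-n<0$, we have $\min\{0,-(-n)\}=\min\{0,n\}=0$, so the formula reads
\[
\operatorname{Cl}(\underline{A}(-n))\cong \frac{\KI\langle t\rangle}{\bF[\theta,t]+\left\{f-(t-\theta)^{n}f^{(1)}\mid f\in\KI\langle t\rangle\right\}}=\frac{\KI\langle t\rangle}{\bF[\theta,t]+\beta(\KI\langle t\rangle)},
\]
where $\beta=\id-(t-\theta)^n\tau$ is exactly the $\bF[t]$-linear operator introduced just before Lemma \ref{lem:beta}. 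So the task is to compute the cokernel of $\beta$ on $\KI\langle t\rangle/\bF[\theta,t]$ and determine its rank. First I would exploit the twisted decomposition \eqref{eq:decomposition-TateAlgebra}, namely $\TT=\bF[\theta,t]\cdot\nu_n\oplus\fm_\infty\cs{t}\cdot\nu_n$, together with Lemma \ref{lem:beta}, which tells us that $\beta$ respects this decomposition and acts as an \emph{isomorphism} on the second summand $\fm_\infty\cs{t}\cdot\nu_n$. Consequently, modulo the image of $\beta$ we may discard the $\fm_\infty\cs{t}\cdot\nu_n$-part entirely, and it remains to understand $\beta$ on the $\bF[\theta,t]\cdot\nu_n$-part, where by Lemma \ref{lem:beta} it acts via $P(t)\nu_n\mapsto(P(t)-(-\theta)^\delta P(t)^{(1)})\nu_n$.

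The second step is to compare the two lattices $\bF[\theta,t]$ and $\bF[\theta,t]\cdot\nu_n$ inside $\KI\langle t\rangle$ and reduce the computation of $\operatorname{Cl}(\underline{A}(-n))$ to a finite-dimensional, purely combinatorial problem. Writing $\KI\langle t\rangle/\bF[\theta,t]\cong\fm_\infty\langle t\rangle$, and using \eqref{eq:decomposition-TateAlgebra-ofnorm<1} to identify the relevant finite part with $\bF[\theta,t]_{\deg_\theta<h}\cdot\nu_n$, the cokernel of $\beta$ becomes the cokernel of the $\bF[t]$-linear ``shifted Carlitz-twist'' operator
\[
\beta_\delta\colon P(t)\longmapsto P(t)-(-\theta)^\delta P(t)^{(1)}
\]
acting on $\bF[\theta,t]_{\deg_\theta<h}$ modulo $\bF[\theta,t]$ (i.e. modulo polynomials of $\theta$-degree $\geq h$), after dividing by $\nu_n$; the point is that $\beta_\delta$ raises the $\theta$-degree of a degree-$d$ polynomial to $qd+\delta$, so iterating $\beta_\delta$ moves things out of the window $\deg_\theta<h$ and the cokernel is finitely generated over $\bF[t]$ exactly as asserted by Proposition \ref{prop:formula-of-interest}. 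The rank over $\bF[t]$ of this cokernel is then read off by a dimension count: when $\delta>0$, a careful bookkeeping of how $\beta_\delta$ permutes/shifts the $\theta$-powers shows the cokernel is all torsion, i.e. rank $0$; when $\delta=0$ (so $q-1\mid n$), the constant polynomial $1$ (equivalently, $\nu_n=\omega^{-n}$ up to scalar, which now does not lie in $\KI\langle t\rangle$ — a phenomenon already visible in Lemma \ref{lem:kernel-beta}) contributes a free rank-$1$ summand because $\beta_0$ is $\id-(-\theta)^0\tau^{?}$... more precisely the operator $\id-\tau$ scaled appropriately has a one-dimensional ``escaping direction'' that survives in the cokernel. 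I would make this precise by exhibiting an explicit $\bF[t]$-linear map from the cokernel onto $\bF[t]$ (a residue-type functional picking off a suitable coefficient) in the case $q-1\mid n$, and conversely bounding the rank from above by the same degree count.

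The main obstacle I expect is the bookkeeping in the rank-$0$ case $q-1\nmid n$: one must show that the finite-dimensional cokernel of $\beta_\delta$ on $\bF[\theta,t]_{\deg_\theta<h}$ is genuinely $\bF[t]$-torsion, not merely finitely generated. The subtlety is that $\beta_\delta$ mixes the $t$-variable (via the twist $P\mapsto P^{(1)}$, which raises $\bF$-coefficients to the $q$-th power but does \emph{not} change $\deg_t$) with multiplication by $(-\theta)^\delta$; so one cannot simply triangularize. The cleanest route is probably to base-change to $\overline{\bF}(t)$ or to argue prime-by-prime in $\bF[t]$: for a maximal ideal $\fp=(P(t))$ of $\bF[t]$ with residue field $\bF_\fp$, localize/complete and show that $\beta$ becomes bijective on the relevant finite module over $\bF_\fp$, which amounts to a linear-algebra statement about the semilinear operator $v\mapsto v-(-\theta)^\delta\phi_\fp(v)$ (where $\phi_\fp$ is the $q$-power Frobenius twisted through the residue field) having no eigenvalue obstruction — precisely where $\delta>0$ forces the valuation mismatch at $\theta=0$ exploited in the proof of Lemma \ref{lem:kernel-beta}. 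Once that localization argument is in place, both the rank statement and the dichotomy $q-1\mid n$ versus $q-1\nmid n$ fall out, and the theorem follows from Corollary \ref{cor:formula-for-carlitz}.
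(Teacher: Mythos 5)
Your setup tracks the paper closely through the first two steps: you invoke Corollary \ref{cor:formula-for-carlitz}, pass to the operator $\beta=\id-(t-\theta)^n\tau$ on $\KI\langle t\rangle/\bF[\theta,t]$, use Lemma \ref{lem:beta} to discard the summand $\fm_\infty\cs{t}\cdot\nu_n$ on which $\beta$ is bijective, and reduce to the finite-rank operator $\beta_\delta$ (the paper's $\gamma$) on $\bF[\theta,t]_{\deg_\theta<h}\cdot\nu_n$. That is exactly the content of Lemma \ref{lem:quasi-isomorphism}. Where you diverge is in the final, and crucial, step: reading off the rank of the cokernel.

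The paper's trick, once the quasi-isomorphism to the finite complex $\bigl[\bigoplus_{k<h}\bF[t]\,\theta^k\nu_n\xrightarrow{\ \gamma\ }\bigoplus_{k<h}\bF[t]\,\theta^k\nu_n\bigr]$ is established, is simply that $\gamma$ is an endomorphism of a \emph{finite free} $\bF[t]$-module, so after tensoring with $\bF(t)$ its kernel and cokernel have the same dimension; hence $\operatorname{rank}\operatorname{coker}\beta=\operatorname{rank}\ker\beta$, and the latter was already computed in Lemma \ref{lem:kernel-beta} (trivial when $q-1\nmid n$, rank one when $q-1\mid n$). You do not appeal to this rank equality. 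Instead you propose to attack the cokernel head-on: a degree/bookkeeping argument plus localization at primes of $\bF[t]$ to show torsion when $\delta>0$, and an explicit ``residue-type functional'' to exhibit a free rank-one quotient when $\delta=0$. That route is plausible in principle, but it amounts to re-proving a cokernel-side analogue of Lemma \ref{lem:kernel-beta} from scratch, and the decisive pieces of it are left as intentions (``I would make this precise by\ldots'', ``The cleanest route is probably\ldots''). In particular, the $\delta>0$ torsion claim is not established: the operator $\gamma$ mixes the $\theta$-degrees nontrivially through the truncation by the projection onto $\bF[\theta,t]_{\deg_\theta<h}\cdot\nu_n$, and showing surjectivity after base change to $\bF(t)$ (or bijectivity $\fp$-adically) is genuine work, not bookkeeping.

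So the concrete gap is the missing observation that the rank of the cokernel of $\gamma$ equals the rank of its kernel, which converts Lemma \ref{lem:kernel-beta} directly into Theorem \ref{thm:class-module-n-neg} with no further analysis. Without it, your plan replaces a one-line deduction by a substantial dual computation which is not carried out. I would suggest you notice that $\gamma$ acts on a free $\bF[t]$-module of finite rank $h$, tensor with the fraction field $\bF(t)$, and invoke rank--nullity there; then the theorem follows immediately from Lemma \ref{lem:kernel-beta} via the quasi-isomorphism.
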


We denote by $\gamma$ the $\bF[t]$-linear map given as the composition of:
\begin{equation}\label{eq:gamma}
\gamma:\bigoplus_{k=1}^{h} \bF[t]\cdot \theta^{-k} \hookrightarrow \KI\langle t \rangle \stackrel{\Delta}{\longrightarrow} \KI\langle t \rangle \twoheadrightarrow \bigoplus_{k=1}^{h}  \bF[t]\cdot \theta^{-k},
\end{equation}
where the projection map is taken orthogonally to $\mI\langle t \rangle\cdot \nu_n$. We have the following:
\begin{Lemma}\label{lem:quasi-isomorphism}
The complex $\left[\displaystyle\bigoplus_{k=1}^{h} \bF[t]\cdot \theta^{-k}\stackrel{\gamma}{\longrightarrow} \bigoplus_{k=1}^{h} \bF[t]\cdot \theta^{-k}\right]$ sitting in degrees $\{0,1\}$ is quasi-isomorphic to $G_{\underline{A}(-n)}$.
\end{Lemma}

\begin{proof}
Let $\displaystyle\pi:\KI\langle t \rangle/\bF[\theta,t]\to \bigoplus_{k=1}^{h} \bF[t]\cdot \theta^{-k}$ be the composition of 
\[
\pi:\frac{\KI\langle t \rangle}{\bF[\theta,t]} \stackrel{\sim}{\longleftarrow} \fm_\infty\langle t \rangle = \bigoplus_{k=1}^{h} \bF[t]\cdot \theta^{-k} \oplus \fm_\infty\cs{t}\cdot \nu_n \twoheadrightarrow  \bigoplus_{k=1}^{h} \bF[t]\cdot \theta^{-k}.
\]
The diagram 
\begin{equation}
\begin{tikzcd}
\displaystyle\frac{\KI\langle t \rangle}{\bF[\theta,t]} \arrow[r,"\Delta"] \arrow[d,"\pi"] & \displaystyle\frac{\KI\langle t \rangle}{\bF[\theta,t]} \arrow[d,"\pi"] \\
\displaystyle\bigoplus_{k=1}^{h} \bF[t]\cdot \theta^{-k}  \arrow[r,"\gamma"] & \displaystyle\bigoplus_{k=1}^{h} \bF[t]\cdot \theta^{-k}
\end{tikzcd} \nonumber
\end{equation}
is commutative, and as $\Delta$ is an isomorphism on the summand $\fm_\infty\langle t\rangle \cdot \nu_n$ (from Lemma \ref{lem:beta}), it induces the sought quasi-isomorphism. 
\end{proof}

\begin{proof}[Proof of Theorem \ref{thm:class-module-n-neg}]
By Proposition \ref{prop:description-rB}, $\operatorname{Cl}(\underline{A}(-n))$ is isomorphic to $\operatorname{H}^1(G_{\underline{A}(-n)})$. By Lemma \ref{lem:quasi-isomorphism}, $G_{\underline{A}(-n)}$ is a perfect complex of Euler characteristic zero. Hence, Lemma \ref{lem:kernel-beta} concludes the proof. 
\end{proof}

\subsection{Relation with Carlitz zeta values}
In this paragraph, we prove Theorem \ref{thm:zeta?}. The Carlitz zeta value at $-n$, denoted $\zeta_C(-n)$, is defined as the sum 
\[
\zeta_C(-n)=\sum_{d\geq 0}{S_d(n)}, \quad \text{where} \quad S_d(n):=\sum_{\substack{a~\text{monic}\\ \deg(a)=d}}{a(t)^{n}}.
\]
For $d$ large enough (depending on $n$) the summand $S_d(-n)$ vanishes, from which we deduce $\zeta_C(-n)\in \bF[t]$ \cite[\S 4]{goss-zeta}. The combinatorics of the sums $S_d(n)$ was studied in details by Thakur in \cite{thakur-powersums}. Consider rather the sum:
\[
Z(x,-n):=\sum_{d\geq 0}{x^d S_d(n)}
\]
which is a polynomial in both $x$ and $t$. We have $Z(1,-n)=\zeta_C(-n)$ by design but, more generally, let $\zeta_C^*(-n)\in \bF[t]$ be the first non zero coefficient in the expansion of $Z(x,-n)$ at $x=1$. Using the trace formula as expressed in Taelman's \cite{taelman-woodshole}, we prove:

\begin{Theorem}\label{thm:zeta}
We have $\operatorname{Fitt} \operatorname{Cl}(\underline{A}(-n))=(\zeta_C^*(-n))$.
\end{Theorem}
\begin{proof}
Consider the scheme $Y:=\Spec \bF[t]\times \bP^1_{\bF}$ with the projection map $\operatorname{pr}:Y\to \bP^1_{\bF}$ where $\bP^1_{\bF}$ is seen as the curve associated to the function field $\bF(\theta)$. Given an integer $d$, we define the coherent $\cO_Y$-module $\cO(-d\infty)$ as $\operatorname{pr}^*\cO_{\bP^1}(-d\infty)$. If $d$ satisfies $d>\frac{n}{q-1}$, there is a morphism of coherent $\cO_Y$-modules:
\[
\delta_n:\tau^* \cO(-d\infty) \longrightarrow \cO(-(d+1)\infty)
\]
given on sections by mapping $\tau^* f$ to $(t-\theta)^n \tau(f)$, where $\tau$ denotes the endomorphism $\id\times \Frob_q$ of $Y$. By \cite[Prop. 8.5]{taelman-woodshole}, we have 
\begin{equation}\label{eq:woodshole}
Z(x,-n)=\prod_{i\geq 0}{\operatorname{det}_{\bF[t]}\left(\id-x\delta_n|\operatorname{H}^i(Y,\cO(-d\infty))\right)^{(-1)^{i+1}}}
\end{equation}
where we denoted $\delta_n$ the induced morphism on cohomology modules. Now, we have $\operatorname{H}^i(Y,\cO(-d\infty))\cong \bF[t]\otimes \operatorname{H}^i(\bP^1_\bF,\cO_{\bP^1}(-d\infty))$ by base-change. Using the classical cover of $\bP^1_{\bF}$ by the two affine subschemes $\Spec \bF[\theta]$, $\Spec \bF[\theta^{-1}]$ glued along $\Spec \bF[\theta, \theta^{-1}]$, we compute the cohomology of $\cO_{\bP^1}(-d\infty)$ by the complex in degrees in $\{0,1\}$:
\[
\left[\bF[\theta]\oplus \theta^{-d}\bF[\theta^{-1}]\longrightarrow \bF[\theta,\theta^{-1}]\right]
\]
where the boundary map is the subtraction. That is, $\operatorname{H}^i(\bP^1_\bF,\cO_{\bP^1}(-d\infty))=(0)$ unless $i=1$, where:
\[
\operatorname{H}^1(\bP^1_\bF,\cO_{\bP^1}(-d\infty))=\frac{\bF[\theta,\theta^{-1}]}{\bF[\theta]+\theta^{-d}\bF[\theta^{-1}]}.
\]
From this well-known computation, we deduce with \eqref{eq:woodshole} that 
\begin{align*}
Z(x,-n)&=\operatorname{det}_{\bF[t]}\left(\id-x(\id-\Delta) \bigg| \frac{\bF[\theta,\theta^{-1},t]}{\bF[\theta,t]+\theta^{-d}\bF[\theta^{-1},t]}\right) \\
&=\operatorname{det}_{\bF[t]}\left(\id-x(\id-\gamma) \bigg| \bigoplus_{k=1}^{d-1}{\bF[t]\cdot \theta^{-k}}\right).
\end{align*}
Now, as $(q-1)(h+1)>n$ by choice of $h$, we can take $d=h+1$. The result is then deduced from Lemma \ref{lem:quasi-isomorphism} and generalities on Fitting ideals.
\end{proof}

\begin{Remark}
We also deduce from \eqref{eq:woodshole} that $\operatorname{rk}\operatorname{Cl}(\underline{A}(-n))=\operatorname{ord}_{x=1}Z(x,-n)$. Combined with Theorem \ref{thm:rk-Cl}, we recover a result of Goss stating that $\zeta_C(-n)=0$ if, and only if, $q-1|n$. This is reminiscent of the classical Riemann zeta values vanishing at even negative integers.
\end{Remark}

\begin{Remark}\label{rmj:elementary-strategy}
We believe that Theorem \ref{thm:zeta} can be proven by elementary means (i.e. without the use of Taelman's trace formula). Considering the action of $\gamma$ on the basis $((-\theta)^{-1},...,(-\theta)^{-h})$ seen in $\KI\langle t \rangle/\fm_{\infty}\langle t \rangle\cdot \nu_n$, a more combinatorial formula for the polynomial $P_n(x):=\operatorname{det}_{\bF[t]}(\id-x(\id-\gamma))$ is revealed:
\[
P_n(x)=\operatorname{det}_{\bF[t]}(\id_h-xM_n),~\text{where}~M_n=\left( \binom{n}{iq-j+\delta}\cdot t^{iq-j+\delta} \right)_{0\leq i,j< h}\in \operatorname{Mat}_h(\bF_{\!p}[t]).
\]
On the other-hand, following Goss' recursion formula \cite[Thm. 5.6]{goss-zeta}, one shows:
\begin{equation}\label{eq:recursion}
Z(x,-n)=1-x\displaystyle \!\sum_{0\leq i< h }{\!\binom{n}{i(q-1)+\delta} t^{i(q-1)+\delta} Z(x,-i(q-1)-\delta)}.
\end{equation}
The formula \eqref{eq:recursion} is strongly suggestive regarding the form of $M_n$, and Theorem \ref{thm:zeta} would follow if one was able to show that $P_n(x)$ satisfies the recursion formula \eqref{eq:recursion}. Surprisingly, all our efforts in proving this elementary-looking problem were vain; classical determinantal identities seem not enough to answer it, and sharp congruences for binomial coefficients mod $p$ -- which still elude us -- might be at play behind this result.
\end{Remark}

The following consequence was already stated in the introduction.
\begin{Corollary}\label{cor:thakur}
Assume that $q=p$ is prime and let $n\geq 0$. Then, the map 
\begin{equation}\label{eq:rBzeta}
r_B:\Ext^{1,\operatorname{reg}}_A(\mathbbm{1},\underline{A}(-n))\longrightarrow \operatorname{H}^1(G_\infty,\underline{A}(-n)_B),
\end{equation}
is an isomorphism if, and only if, the sum of the digits of $n$ written in base $p$ is $<p-1$. 
\end{Corollary}
\begin{proof}
From Theorem \ref{thm:revelo}, the morphism \eqref{eq:rBzeta} is injective. From Theorems \ref{thm:class-module-n-neg} and \ref{thm:zeta}, it is an isomorphism if, and only if, $q-1\nmid n$ and $\zeta_C(-n)\in \bF^\times$. Now, for $q-1\nmid n$, we claim that the following points are equivalent:
\begin{enumerate}[label=(\Roman*)]
    \item\label{item:sumdigits} The sum of digit of $n$ in base $p$ is $<p-1$,
    \item\label{item:Sd(n)vanishes} For all $d>0$, $S_d(n)=0$,
    \item\label{item:Zconstant} $Z(x,-n)\in \bF$,
    \item\label{item:ZetaSmall} $\zeta(-n)\in \bF$.
    \item\label{item:Zeta1} $\zeta(-n)=1$.
\end{enumerate}
The equivalence between \ref{item:sumdigits} and \ref{item:Sd(n)vanishes} is due to Lee \cite[Thms.\,2 \& 3]{lee}. The equivalence between \ref{item:Sd(n)vanishes} and \ref{item:Zconstant} follows from the definition of $Z(x,-n)$. Now, clearly $Z(x,-n)\in \bF$ implies $\zeta(-n)\in \bF$. The converse is deduced from Lemma \ref{lem:thakur} below, proving the equivalence between \ref{item:Zconstant} and \ref{item:ZetaSmall}. That \ref{item:ZetaSmall} is equivalent to \ref{item:Zeta1} follows from $S_0(n)=1$ using the equivalence between \ref{item:Zconstant} and \ref{item:ZetaSmall}. 
\end{proof}

We thank Dinesh Thakur for sharing with us the following result.
\begin{Lemma}\label{lem:thakur}
Assume $q-1\nmid n$ and let $\ell_n$ be the highest integer $\ell$ such that $S_{\ell}(n)\neq 0$; i.e. the degree in $x$ of $Z(x,-n)$. Then, the degrees of the family $(S_d(n))_{d=1,...,\ell_n}$ are strictly increasing. 
\end{Lemma}
\begin{proof}
Let $s_d(n)$ be\footnote{That is, $-s_d(-n)$ according to the notations of \cite{thakur-powersums}.} the degree of the polynomial $S_d(n)$. By \cite[(2)]{thakur-powersums}, when $0<d<\ell_n$ we have 
\[
s_d(n)=s_1^{(d-1)}(n)+\ldots+s_1^{(2)}(n)+s_1(n)
\]
where $s_1^{(i)}(n):=s_1(s_1(\cdots (s_1(n))\cdots ))$ $i$-times. To prove that $s_d(n)$ is strictly increasing, it suffices to show that $s_1^{(i)}(n)$ is non zero. Yet, from Thm.\,14 in \emph{loc.\,cit.}, we have $s_1(k)\equiv k\pmod{q-1}$ from which we deduce that $s_1^{(i)}(n)=0$ can only happen if $n\equiv 0\pmod{q-1}$.
\end{proof}

\section{Positive twists}\label{sec:positive-twists}
For this section, we are concerned with the $t$-motive $\underline{A}(n)$ for a positive number $n>0$. We begin this time with the simpler case of the class module. Surprisingly, the methods quite differ from the $\underline{A}(-n)$-case.
In the determination of the class module, we also allow $n=0$ to cover the case of twist zero.

\subsection{The $\xi$ functions}
Recall that $K$ is short for the field $\bF(\theta)$. We denote by $\overline{K}$ the algebraic closure of $K$ inside $\CI$ and we let $\cO_{\overline{K}}$ be the integral closure of $\bF[\theta]$ in $\overline{K}$. In our study of positive twists, a crucial role is played by solutions $\xi$ in the Tate algebra of the equation 
\[(t-\theta)^n\Delta \xi=(t-\theta)^n\xi - \xi^{(1)} = e\] for various $e$ in $\CI\langle t\rangle$. Whenever $e$ satisfies $\|e\|<q^{\frac{nq}{q-1}}$, a particular solution of this equation is given by the convergent series in $\CI\langle t \rangle$:
\begin{equation}\label{eq:xi}
\xi_{e}(t):= \frac{e}{(t-\theta)^n}+\sum_{k=1}^\infty \frac{e^{(k)}}{(t-\theta)^n(t-\theta^q)^n\cdots (t-\theta^{q^k})^n},
\end{equation}
and all the solutions are provided by the affine set $\xi_{e}(t)+\bF[t] \omega(t)^n$. 

A key property of the series $\xi_\alpha$, for some $\alpha\in \CI$ of norm $\|\alpha\|<q^{\frac{nq}{q-1}}$, is its relation with the Carlitz $n$th polylogarithm:
\begin{equation}\label{eq:xi-and-polylog}
(t-\theta)^n\xi_{\alpha}(t)|_{t=\theta}=\Li_{n}(\alpha).
\end{equation}
We will also frequently use that the mapping $e\mapsto \xi_e$ is $\bF[t]$-linear.

\subsubsection*{Linear relations}
As we shall see below, linear relations among the elements $\xi_e(t)$, $\omega(t)^n$ and polynomials in $\overline{K}[t]$, for various polynomials $e$, dictate the structure of the module of extensions of Carlitz twists. In this subsection, we establish the linear relations among those functions and will draw consequences on extensions in the following sections. 

\begin{Proposition}\label{prop:linear-relation-among-xi}
Let $\cR\subseteq\cO_{\overline{K}}$ be the sub-algebra $\overline{\bF}[\theta^r\mid r\in \bQ_+]$.
\begin{enumerate}
\item\label{item:xi_e-not-rational} For $0\ne e\in \cO_{\overline{K}}[t]$ with norm $\norm{e}<q^n$, one has $\xi_e\not\in \overline{K}(t)$.
\item\label{item:reduce-norm} For each $e'\in \cR[t]$ with norm $\norm{e'}<q^{\frac{nq}{q-1}}$, there exists a unique $e\in \cR[t]$ with norm $\norm{e}<q^n$ such that
$\xi_{e'}-\xi_{e}\in \overline{K}(t)$.
For that $e$, one even has $\xi_{e'}-\xi_{e}\in \cR[t]$.
\item\label{item:e} There exists a unique $e_n\in \cO_{\overline{K}}[t]$ with norm $\|e_n\|<q^n$, such that $\xi_{e_n}+\omega^n\in \overline{K}(t)$. One even has $\xi_{e_n}+\omega^n\in \cO_{\overline{K}}[t]$.
\item\label{item:e-with-b} For each $b\in \bF[t]$, and $e_n$ as in the previous statement, the element $e'=b\cdot e_n$ is the unique element in $\cO_{\overline{K}}[t]$ with norm $<q^n$, such that $\xi_{e'}+b\cdot \omega^n\in \overline{K}(t)$.
\end{enumerate}
\end{Proposition}

Here is the recipe to construct the element $e_n$ as in point \ref{item:e}. First, let $g_n(t)\in \cO_{\overline{K}}[t]$ be the \emph{best integral approximation to $\omega(t)^n$}, that is, the unique polynomial such that $\|g_n(t)-\omega(t)^n\|<1$. Then $e_n$ is defined as 
\begin{equation}\label{eq:construction-e}
e_n(t):=(t-\theta)^n g_n(t)-g_n(t)^{(1)}\in \cO_{\overline{K}}[t]\setminus\{0\}.
\end{equation}
\begin{Remark}\label{rem:p-power}
From the uniqueness of $e_n$, we automatically get $e_{np}=e_n^p$, which can also be deduced from formula \eqref{eq:construction-e}.
\end{Remark}

\begin{proof}[Proof of Proposition \ref{prop:linear-relation-among-xi}]
We prove \ref{item:xi_e-not-rational}. Assume that $\xi_e\in \overline{K}(t)$. As $\xi_e$ lies in the Tate algebra, its evaluation at $t=a$ for $a\in \overline{K}$ of norm $\leq 1$ is given by a convergent series; that is, all possible poles of $\xi_e$ have norm $>1$. If it has a pole at $t=a$, $a\in \overline{K}$, then $\xi_e^{(1)}$ has a pole at $a^q$. But as $(t-\theta)^n \xi_e = \xi_e^{(1)} - e$, $\xi_e$ has a pole at $t=a^q$. Inductively, we obtain that it has infinitely many poles, namely at $a$, $a^q$, $a^{q^2}$ etc, of increasing norm, hence distinct. This contradicts the assumption of being a rational function, so $\xi_e$ does not have poles, i.e.~$\xi_e\in \overline{K}[t]$. \\
Therefore, we can write $\xi_e =a\cdot \prod_{i=1}^l (t-a_i)$ for some $a,a_i\in \overline{K}$, and we have $\|a\|<1$ as $\|\xi_e\|=\|e\|\cdot q^{-n}<1$. In particular, $a\not\in \cO_{\overline{K}}$. But the equation $(t-\theta)^n \xi_e -\xi^{(1)}_e=e$ implies that the leading coefficient of $e$ as a polynomial in $t$ is $a$, contradicting $e \in \cO_{\overline{K}}[t]$. 

Next we prove \ref{item:reduce-norm}. Uniqueness is clear from point \ref{item:xi_e-not-rational}, since the difference $e-\tilde{e}$ of two candidates would satisfy $\xi_{e-\tilde{e}}=\xi_e-\xi_{\tilde{e}}\in \overline{K}(t)$. We show existence by giving an algorithm to find $e$. The key ingredient is the relation
\begin{equation} \label{eq:polynomial-xi}
\xi_{(t-\theta)^n d-d^{(1)}}=\xi_{(t-\theta)^nd}-\xi_{d^{(1)}}=d
\end{equation}
for all $d\in \cO_{\overline{K}}[t]$ with norm $\norm{d}<q^{\frac{n}{q-1}}$. We leave the verification of this relation to the reader.

If $\norm{e'}<q^n$, there is nothing to prove. So we assume $\norm{e'}\geq q^n$. As $e\in \cR$, there exists an integer $\delta>0$ for which $e'\in \bar{\bF}[\theta^{1/\delta},t]$. The polynomial division algorithm of $e'$ by $(t-\theta)^n$ as polynomials in the variable $\theta^{1/\delta}$ provides $d_1,c_1\in \bar{\bF}[t][\theta^{1/\delta}]\subset \cR[t]$ such that $e'=(t-\theta)^n d_1+c_1$ with $\norm{c_1}<q^n$.
By equation \eqref{eq:polynomial-xi}, the element $e_1=d_1^{(1)}+c_1$ satisfies
\[ \xi_{e'}-\xi_{e_1}=\xi_{(t-\theta)^nd_1-d_1^{(1)}}=d_1\in \cR[t], \]
since $\norm{d_1}=\norm{e'}\cdot q^{-n}<q^{\frac{n}{q-1}}$.
If $\norm{e_1}<q^n$, this is the desired $e$. Otherwise, we repeat this process to $e_1$, to obtain inductively a family of elements $(e_k)_{k\geq 1}$ in  $\cR[t]$. We claim that there is some $m\geq 1$ for which $\norm{e_m}<q^n$; this implies the result, as then
\[ \xi_{e'}-\xi_{e_m}=d_1+d_2+\ldots+d_m \in \cR[t],\]
where the $d_i$ are the corresponding elements in the process. It remains to prove the claim. If at a step, one has $e_j=d_j^{(1)}+c_j$ with $\norm{e_j}\geq q^n$, then 
\[ \norm{e_j}=\norm{d_j}^q=\left(\norm{e_{j-1}}\cdot q^{-n}\right)^q.\]
Hence,
\[
\frac{\,\,\norm{e_j}\,\,}{q^{\frac{nq}{q-1}}} = \frac{\left(\norm{e_{j-1}}\cdot q^{-n}\right)^q}{q^{\frac{nq}{q-1}}} 
=\frac{\norm{e_{j-1}}^q}{q^{nq}\cdot q^{\frac{nq}{q-1}}} =\left( \frac{\,\,\norm{e_{j-1}}\,\,}{q^{\frac{nq}{q-1}}}\right)^q.
\]
Therefore, since $\norm{e'}<q^{\frac{nq}{q-1}}$, the sequence of quotients  $\frac{\,\,\norm{e_j}\,\,}{q^{\frac{nq}{q-1}}}$ will go below $\frac{q^n}{q^{\frac{nq}{q-1}}}$, hence $(e_k)_{k}$ reaches an element $e_m$ with $\norm{e_m}<q^n$.

We turn to \ref{item:e}. Uniqueness is again clear from point \ref{item:xi_e-not-rational}. So we have to show existence; more precisely, we show that the polynomial $e=e_n$ constructed in \eqref{eq:construction-e} is as required. Using the functional equation of $\omega^n$, we have
\[ e=(t-\theta)^n(g_n-\omega^n)-(g_n-\omega^n)^{(1)}, \]
and hence
\[ \|e\|\leq \max\{ \norm{(t-\theta)^n(g_n-\omega^n)}, \norm{(g_n-\omega^n)^{(1)}} \} < q^n. \]
But since $(t-\theta)^n\xi_e-\xi_e^{(1)}=e$, we get
\[ (t-\theta)^n(\xi_e-g_n+\omega^n)=(\xi_e-g_n+\omega^n)^{(1)} \]
and hence $\xi_e-g_n+\omega^n\in \bF[t]\cdot \omega^n$. Yet $\norm{\xi_e-g_n+\omega^n}<1<\norm{\omega^n}$ which is only possible if $\xi_e-g_n+\omega^n=0$, and which amounts to $\xi_e+\omega^n=g_n\in \cO_{\overline{K}}[t]$. 

It remains to prove \ref{item:e-with-b}. Again, uniqueness is given by point \ref{item:xi_e-not-rational}. Similarly, it is easily verified that $e'=b\cdot e_n$ satisfies the given conditions.
\end{proof}

In order to state at best the different linear relations, we give a name to the module measuring those: for a family of elements $\underline{\alpha}=(\alpha_1,\ldots,\alpha_s)$ in $\cO_{\overline{K}}$ having norm $\|\alpha_i\|<q^{\frac{nq}{q-1}}$, we consider the $\overline{K}[t]$-module
\begin{equation}\label{eq:Z}
\cZ(\underline{\alpha}):=\{(z_1,\ldots,z_s,y)\in \overline{K}[t]^{s+1}|z_1(t) \xi_{\alpha_1}(t)+\ldots+z_s(t)\xi_{\alpha_s}(t)+y(t)\omega(t)^n\in \overline{K}(t)\}.
\end{equation}

Some facts about its structure is given by the next result:
\begin{Proposition}\label{prop:invariant-generators}
The $\overline{K}[t]$-module $\cZ(\underline{\alpha})$ is finite free and $\cZ_{\overline{K}(t)}:=\cZ(\underline{\alpha})\otimes_{\overline{K}[t]}\overline{K}(t)$ is generated by elements in $\bF[t]^{s+1}$.
\end{Proposition}
\begin{proof}
That $\cZ(\underline{\alpha})$ is finite free follows from the fact that it is a submodule of a finite free module over $\overline{K}[t]$. For the second part of the statement, observe that from the $\tau$-difference equations satisfied by $\omega^n$ and $\xi_{\alpha_i}$, one deduces that the vector space $\cZ_{\overline{K}(t)}$ is stable under taking twists, hence is a difference submodule of $\overline{K}(t)^{s+1}$. As $\overline{K}(t)^{s+1}$ is generated by $\tau$-invariant elements, so is the difference submodule $\cZ_{\overline{K}(t)}$ (see \cite[Thm. 1.32]{vdps-difference}, or \cite[Prop.~3.5]{maurischat-pv-theory} for a detailed proof). That is, $\cZ_{\overline{K}(t)}$ is generated by elements in 
$\bF(t)^{s+1}\cap \cZ_{\overline{K}(t)}$. By clearing denominators, we also get a generating set of elements in $\bF[t]^{s+1}$.
\end{proof}

In order to explicit generators of $\cZ(\underline{\alpha})$, we have the next:
\begin{Proposition}\label{prop:Z-alpha}
Suppose that the family $\underline{\alpha}=(\alpha_1,\ldots,\alpha_s)$ in $\cO_{\overline{K}}$ is linearly independent over $\bF$ with norms $\|\alpha_i\|<q^n$. If $e_n$ of Prop.~\ref{prop:linear-relation-among-xi} can be written as $\sum_{i=1}^s c_i\alpha_i$ for some coefficients $c_i\in \bF[t]$, then $\cZ(\underline{\alpha})=\overline{K}[t]\cdot (c_1,\ldots,c_s,1)$. Otherwise, $\cZ(\underline{\alpha})=\{0\}$.
\end{Proposition}

\begin{proof}
Consider a vector $(z_1,\ldots, z_s,y)\in \cZ(\underline{\alpha})_{\overline{K}(t)}$ invariant under twist; i.e. with coordinates in $\bF(t)$. After multiplying with the least common multiple of their denominators, we can assume $z_i\in \bF[t]$ for all $i\in \{1,\ldots,s\}$, as well as $y\in \bF[t]$. Let $e':=\sum_{i=1}^s z_i\alpha_i$. By $\bF[t]$-linearity of the assignment $e\mapsto \xi_e$, we have
$\xi_{e'}=\sum_{i=1}^s z_i\xi_{\alpha_i}$, hence
\[  \xi_{e'} +y\omega^n \in \overline{K}(t). \]
By Proposition \ref{prop:linear-relation-among-xi}, we obtain $e'=\tilde{b}e_n$ for some $\tilde{b}\in \bF[t]$. 

If $e_n=\sum_{i=1}^s c_i\alpha_i$, we have $z_i=y c_i$ for all $i$, as the family $(\alpha_1,\ldots,\alpha_s)$ is linearly independent. Hence by Proposition \ref{prop:invariant-generators}, 
\[ \cZ(\underline{\alpha})_{\overline{K}(t)}=\overline{K}(t)\cdot (c_1,\ldots,c_s,1),\quad \text{and} \quad \cZ(\underline{\alpha})= \cZ(\underline{\alpha})_{\overline{K}(t)}\cap \overline{K}[t]^{s+1}=\overline{K}[t]\cdot (c_1,\ldots,c_s,1).\]

On the opposite, if $e_n$ is not in the $\bF[t]$-span of the $\alpha_i$, one has $\tilde{b}=0=\tilde{a}_i$; that is $\cZ(\underline{\alpha})_{\overline{K}(t)}=\cZ(\underline{\alpha})=0$.
\end{proof}

We record the following Lemma in the case the elements $(\alpha_1,\cdots,\alpha_s)$ are in $\bF[\theta]$.
\begin{Lemma}\label{lem:equiv-cond-for-tuple}
Let $\alpha_1,\ldots,\alpha_s$ be elements in $\bF[\theta]$ of degree $<\frac{qn}{q-1}$. Given a tuple $(c_1,\ldots, c_s,d)\in \bF[t]^{s+1}$, one has
\[ (c_1,\ldots, c_s,d)\in \cZ(\underline{\alpha}) \Longleftrightarrow \sum_{i=1}^s c_i\xi_{\alpha_i}+d\omega^n\in \bF[\theta,t]. \]
\end{Lemma}

\begin{proof}
The implication ``$\Longleftarrow$'' follows by definition of $\cZ(\underline{\alpha})$. We show the converse one: suppose $(c_1,\ldots, c_s,d)\in \cZ(\underline{\alpha})$ and set
$e':=\sum_{i=1}^s c_i\alpha_i\in \bF[\theta,t]$. If $\norm{e'}<q^n$, the implication holds by Proposition \ref{prop:linear-independence}. So we assume $\norm{e'}\geq q^n$. Let $e\in \bF[\theta,t]$ be the unique element with $\norm{e}<q^n$ and $\xi_{e'}-\xi_e\in \bF[\theta,t]$ given by Proposition \ref{prop:linear-independence}\eqref{item:reduce-norm}; we have $\xi_e+d\omega^n\in \overline{K}(t)$ and thus $e=d\cdot e_n$ using Proposition \ref{prop:linear-independence} again. So, either $d=0$, in which case $e=0$ and
\[ \sum_{i=1}^s c_i \xi_{\alpha_i}+d\omega^n=\xi_{e'-e}\in \bF[\theta,t], \]
or $d\ne 0$ and $e_n=\frac{e}{d}\in \bF[\theta](t)\cap \cO_{\overline{K}}[t]=\bF[\theta,t]$.
By Lemma \ref{lem:en-and-gn} below, we also have $g_n\in \bF[\theta,t]$. This amounts to
\[ \sum_{i=1}^s c_i \xi_{\alpha_i}+d\omega^n=\xi_{e'-e}+\xi_e+d\omega^n=\xi_{e'-e}+d g_n \in \bF[\theta,t] \]
as desired.
\end{proof}

\subsubsection*{Properties of the polynomial $e_n(t)$}
We pursue with some key properties of the polynomials $e_n(t)$, constructed in \eqref{eq:construction-e}, which will be used later on to compute the torsion in the integral $t$-motivic cohomology.
\begin{Lemma}\label{lem:en-and-gn}
For the element $e_n$ given in \eqref{eq:construction-e}, we have
\[ g_n\in \bF[\theta,t] \Longleftrightarrow e_n\in \bF[\theta,t] \Longleftrightarrow (q-1)\mid n.\]
\end{Lemma}

\begin{proof}
This is immediate once the following observation is made: the coefficients of $\omega^n$ consist solely of powers $(-\theta)^m$ where $m\in \frac{n}{q-1}+\bZ$. So the same holds for $g_n$ and $e_n$. 
\end{proof}

\begin{Lemma}\label{lem:equivalence-of-vanishing}
Suppose that $(q-1)|n$ and let $\zeta\in \bar{\bF}$ be algebraic of degree $r\geq 1$ over $\bF$. Then, the following are equivalent:
\begin{enumerate}[label=$(\roman*)$]
    \item\label{i:zero-of-e} $e_n(\zeta)=0$,
    \item\label{ii:zero-of-rho} $\xi_{e_n}(\zeta)=0$,
    \item\label{iii:omegan-polynomial} $\omega(\zeta)^n\in \bar{\bF}[\theta]$,
    \item\label{item:iv-qr-1dividesn} $q^r-1|n$.
\end{enumerate}
\end{Lemma}
\begin{proof}
We first prove the equivalence between assertions \ref{i:zero-of-e} and \ref{ii:zero-of-rho}. We have:
\[
e_n(\zeta)=(\zeta-\theta)^n\xi_{e_n}(\zeta)-\xi_{e_n}^{(1)}(\zeta)
\]
so that $e_n(\zeta)=0$ if and only if $\xi_{e_n}^{(1)}(\zeta)=(\zeta-\theta)^n \xi_{e_n}(\zeta)$. As $\xi_{e_n}(t)=g_n(t)-\omega(t)^n$ (Proposition \ref{prop:linear-relation-among-xi}), the norm of $\xi_{e_n}(\zeta)$ is $<1$ by definition of $g_n$. Therefore, the inequalities $|\xi_{e_n}^{(1)}(\zeta)|\leq |\xi_{e_n}(\zeta)|^q\leq |(\zeta-\theta)\xi_{e_n}(\zeta)|$ are equalities if and only if $\xi_{e_n}(\zeta)=0$.

We turn to the equivalence between \ref{ii:zero-of-rho} and \ref{iii:omegan-polynomial}. If $\xi_{e_n}(\zeta)=0$, then ${\omega(\zeta)^n=g_n(\zeta)\in \bar{\bF}[\theta]}$. Conversely, if $\omega(\zeta)^n\in \bar{\bF}[\theta]$, we both have $\xi_{e_n}(\zeta)=g_n(\zeta)-\omega(\zeta)^n\in \bar{\bF}[\theta]$ and $|\xi_{e_n}(\zeta)|<1$. Hence, $\xi_{e_n}(\zeta)=0$. 

It remains to prove the equivalence between \ref{iii:omegan-polynomial} and \ref{item:iv-qr-1dividesn}. As $\zeta$ is algebraic of degree $r$ over $\bF$, the relation $\omega^{(1)}=(t-\theta)\omega$ yields:
\[
\omega(\zeta)^{q^r-1}=\prod_{i=0}^{r-1}{\left(\zeta-\theta^{q^i}\right)}.
\]
In particular, if $(q^r-1)|n$, then $\omega(\zeta)^n\in \bar{\bF}[\theta]$. Conversely, consider the polynomial $f(\theta)\in \bar{\bF}[\theta]$ defined by
\[
f(\theta):=\prod_{i=0}^{r-1}{\left(\zeta-\theta^{q^i}\right)^m}=\omega(\zeta)^{n\frac{q^r-1}{q-1}}.
\]
Observe that $f$ has a zero of order $m$ at $\theta=\zeta$. But if $\omega(\zeta)^n$ is a polynomial as well, the order of this zero will be a multiple of $\displaystyle\frac{q^r-1}{q-1}$. This implies $q^r-1|n$, as desired.
\end{proof}

The next proposition is a key step towards determining the torsion of the extension modules, as in Theorem \ref{thm:mot-coh-car}.
\begin{Proposition}\label{prop:gcd-explicit}
Write $n=p^c n_0$ for $c\geq 0$ and $n_0>0$ prime to $p$. Let also $\ell$ be the least common multiple of the integers $r$ such that $q^r-1|n$. Then, the maximal polynomial $\varepsilon_n(t)$ in $\bF[t]$ dividing $e_n$ is
\[
\varepsilon_n(t)=\left(t^{q^\ell}-t \right)^{p^c}.
\]
\end{Proposition}
\begin{proof}[Proof of Proposition \ref{prop:gcd-explicit}]
From Remark \eqref{rem:p-power}, we note that $\varepsilon_{pn}=\varepsilon_n^p$. Accordingly, we can assume without loss of generality that $n$ is prime to $p$. 

Let $r\geq 1$. By definition of $\varepsilon_n(t)$, we have that an element $\zeta\in \bF_{q^r}$ is a zero of $\varepsilon_n(t)$ if and only if it is a zero of $e_n(t)$. From the equivalence between \ref{i:zero-of-e} and \ref{item:iv-qr-1dividesn} in Lemma \ref{lem:equivalence-of-vanishing}, this is the case if and only if $q^r-1$ divides $n$. As $\prod_{\zeta\in \bF_{q^r}} (t-\zeta)=t^{q^r}-t$, and 
$\operatorname{lcm}\left\{t^{q^r}-t|r:~q^r-1|n\right\}=t^{q^{\ell}}-t$, we find $t^{q^\ell}-t|\varepsilon_n(t)$. To claim equality, it suffices to prove that the roots of $e_n(t)$ in $\bar{\bF}$ are simple. 

Suppose that $e_n$ has a double root $\zeta\in \bar{\bF}$. Then, $e(\zeta)=0$ and $(\partial_t e_n)(\zeta)=0$ as well, where $\partial_t$ denotes the partial derivative with respect to $t$. Since $e_n=(t-\theta)^n\xi_{e_n}-\xi_{e_n}^{(1)}$, we have 
\[
\partial_t e_n=(t-\theta)^n(\partial_t \xi_{e_n}) + n(t-\theta)^{n-1}\xi_{e_n}-(\partial_t \xi_{e_n})^{(1)}
\]
so that $0=(\partial_t e_n)(\zeta)=(\zeta-\theta)^n(\partial_t\xi_{e_n})(\zeta)-(\partial_t \xi_{e_n})^{(1)}(\zeta)$ by \ref{ii:zero-of-rho} of Lemma \ref{lem:equivalence-of-vanishing}. Yet, as $\|\xi_{e_n}\|<1$, the same argument as in the proof of \ref{i:zero-of-e} $\Longleftrightarrow$ \ref{ii:zero-of-rho} in Lemma \ref{lem:equivalence-of-vanishing} applies to show that $(\partial_t\xi_{e_n})(\zeta)=0$. We obtain:
\[
(\partial_t \omega^n)(\zeta)=(\partial_t g_n)(\zeta)\in \bar{\bF}[\theta].
\]
By \ref{iii:omegan-polynomial} of Lemma \ref{lem:equivalence-of-vanishing}, we have $\omega(\zeta)^n\in\bar{\bF}[\theta]$ as well. Therefore, $\left(\frac{\partial_t \omega^n}{\omega^n}\right)(\zeta)\in \bar{\bF}(\theta)$. But as $n$ is prime to $p$,
\[
\left(\frac{\partial_t\omega^n}{\omega^n}\right)(\zeta)=n\left(\frac{\partial_t \omega}{\omega}\right)(\zeta)=n\sum_{k=0}^{\infty}{\frac{\theta^{q^k}}{\zeta-\theta^{q^k}}}
\]
is not a rational function over $\bar{\bF}$. This is a contradiction. 
\end{proof}

\subsection{Determination of the class module}
As a first application of the functions $\xi$, we record:
\begin{Proposition}\label{prop:class-module-n-pos}
For $n\geq 0$, the class module $\operatorname{Cl}(\underline{A}(n))$ is trivial.
\end{Proposition}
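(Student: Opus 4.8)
By Corollary~\ref{cor:formula-for-carlitz}, for $n\geq 0$ the class module is
\[
\operatorname{Cl}(\underline{A}(n))\cong \frac{\KI\langle t \rangle}{\bF[\theta,t]+\left\{f-(t-\theta)^{-n}f^{(1)}\mid f\in \KI\langle t \rangle\right\}},
\]
since $\min\{0,-n\}=0$ when we write it for $+n$... wait, for $\underline{A}(n)$ the relevant term is $(t-\theta)^{\min\{0,-n\}}\bF[\theta,t]=(t-\theta)^{-n}\bF[\theta,t]$; but in fact $\bF[\theta,t]\subseteq (t-\theta)^{-n}\bF[\theta,t]$, so the submodule generated is $(t-\theta)^{-n}\bF[\theta,t]$ plus the image of $\id-(t-\theta)^{-n}\tau$. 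The plan is to show this quotient vanishes, i.e. every element of $\KI\langle t\rangle$ lies in $(t-\theta)^{-n}\bF[\theta,t]+(\id-(t-\theta)^{-n}\tau)(\KI\langle t\rangle)$. First I would rescale: multiplying through by $(t-\theta)^n$ (a unit in $\TT$) and using $\omega^n$ or rather the relation $(t-\theta)^n\omega^n=(\omega^n)^{(1)}$, one can convert the operator $\id-(t-\theta)^{-n}\tau$ into a cleaner $\tau$-difference operator; alternatively write any $f$ and analyze $f-(t-\theta)^{-n}f^{(1)}$ directly.

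The heart of the argument should be a \emph{convergence/contraction} estimate on the Gauss norm. On $\CI\langle t\rangle$ the twist satisfies $\norm{f^{(1)}}=\norm{f}^q$, while multiplication by $(t-\theta)^{-n}$ scales the Gauss norm by $q^{-n}$ (since $\norm{(t-\theta)}=q$, as $\deg_\theta$-term dominates). So for $f$ of small Gauss norm, $(t-\theta)^{-n}f^{(1)}$ has norm $q^{-n}\norm{f}^q$, strictly smaller than $\norm{f}$ once $\norm{f}<q^{n/(q-1)}$. Hence on the ball $\fm_\infty\langle t\rangle$ (or a slightly larger ball), the operator $g\mapsto (t-\theta)^{-n}g^{(1)}$ is a contraction, so $\id-(t-\theta)^{-n}\tau$ is invertible there by a geometric-series (Picard iteration) argument: given $\eta$ in that ball, $f=\sum_{i\geq 0}(t-\theta)^{-n i}\eta^{(i)}$ converges in $\TT$ and solves $f-(t-\theta)^{-n}f^{(1)}=\eta$. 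This shows $\fm_\infty\langle t\rangle$ (and in fact everything of Gauss norm $<q^{n/(q-1)}$) is contained in the image of $\id-(t-\theta)^{-n}\tau$ on $\KI\langle t\rangle$. It then remains to handle the complementary piece $\bF[\theta,t]$: but $\bF[\theta,t]$ is literally one of the two summands generating the denominator, so it maps to $0$. Combining, $\KI\langle t\rangle=\bF[\theta,t]+\fm_\infty\langle t\rangle\subseteq \bF[\theta,t]+(\id-(t-\theta)^{-n}\tau)(\KI\langle t\rangle)$, and the quotient is $0$.

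There is a subtlety I would be careful about: one must solve the equation $f-(t-\theta)^{-n}f^{(1)}=\eta$ \emph{within} $\KI\langle t\rangle$, not merely in $\CI\langle t\rangle$, so I need the Picard iterate $\sum_i (t-\theta)^{-ni}\eta^{(i)}$ to have coefficients in $\KI$ — which holds because $(t-\theta)^{-n}\in \KI\langle t\rangle$ and $\eta\in\KI\langle t\rangle$ and $\KI\langle t\rangle$ is a $\tau$-stable complete subring, so each partial sum lies in $\KI\langle t\rangle$ and the limit does too by completeness. I should also double-check the norm bookkeeping: $\norm{(t-\theta)^{-1}}=q^{-1}$ needs $|\theta|=q$, which is our normalization $|\cdot|=q^{\deg}$, and $(t-\theta)$ has Gauss norm $\max\{1,|\theta|\}=q$, with inverse in $\KI\langle t\rangle$ of Gauss norm $q^{-1}$; this is standard.

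\textbf{Main obstacle.} The only real work is the contraction estimate and verifying convergence of the Picard series in the correct ring; everything else is bookkeeping with the decomposition $\KI\langle t\rangle=\bF[\theta,t]\oplus\fm_\infty\langle t\rangle$. I expect the cleanest writeup uses the flattening by $\nu$ (analogous to $\nu_n$ in the negative-twist case) to reduce the operator $\id-(t-\theta)^{-n}\tau$ to $\id-(-\theta)^{-\delta'}\tau$-type form with a genuinely norm-decreasing twist, making the geometric series manifestly convergent; I would present it that way rather than estimating raw.
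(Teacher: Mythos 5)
Your approach is essentially the paper's: decompose $\KI\langle t\rangle=\bF[\theta,t]\oplus\fm_\infty\langle t\rangle$, observe that the polynomial summand is already in the denominator, and solve the $\tau$-difference equation on $\fm_\infty\langle t\rangle$ by a convergent geometric series for the contraction $T=(t-\theta)^{-n}\tau$. One slip worth fixing: the explicit Picard iterate is not $\sum_{i\geq 0}(t-\theta)^{-ni}\eta^{(i)}$, because the Frobenius twist also acts on $\theta$; the correct $i$th term is
\[
T^i(\eta)=\frac{\eta^{(i)}}{(t-\theta)^n(t-\theta^q)^n\cdots(t-\theta^{q^{i-1}})^n},
\]
which is exactly the series $s_h$ appearing in the paper. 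With this correction the telescoping identity $s_\eta-(t-\theta)^{-n}s_\eta^{(1)}=\eta$ holds, and your norm estimate still gives convergence in $\KI\langle t\rangle$ (the Gauss norm of $T^i(\eta)$ is $q^{-n(q^i-1)/(q-1)}\norm{\eta}^{q^i}\to 0$ for $\norm{\eta}<1$). The rest of your bookkeeping — $\norm{(t-\theta)^{-1}}=q^{-1}$, $\bF[\theta,t]\subseteq(t-\theta)^{-n}\bF[\theta,t]$, and completeness of $\KI\langle t\rangle$ ensuring the limit stays in the right ring — is sound. Flattening by $\nu$ is not needed here and the paper does not use it for this direction.
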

\begin{proof}
From Corollary \ref{cor:formula-for-carlitz}, it suffices to show that any $f\in \KI\langle t \rangle$ can be written as a sum $f=p+h$ where $p\in (t-\theta)^{-n}\bF[\theta,t]$, and $h\in \KI\langle t \rangle$ is in the image of $\Delta$.

However, for every $h\in \fm_\infty \langle t \rangle$, the series
\begin{equation}\label{eq:s_h}
\xi_{h(t-\theta)^n}=h+\sum_{k=1}^{\infty}{\frac{h^{(k)}}{(t-\theta)^n(t-\theta^q)^n \cdots (t-\theta^{q^{k-1}})^n}}
\end{equation}
converges in $\KI\langle t \rangle$ to an element satisfying $\Delta \xi_{h(t-\theta)^n}=h$. Hence, we can choose the unique elements $p\in \bF[\theta,t]$ and $h\in \fm_\infty \langle t \rangle$ according to the decomposition 
\[
\KI\langle t \rangle = \bF[\theta,t]\oplus  \fm_\infty \langle t \rangle,
\]
and obtain the desired decomposition for $f$.
\end{proof}

\subsection{Polylog classes}
In this subsection, we prove the remaining part of Theorems \ref{thm:polylogclass} and \ref{thm:mot-coh-car} that are concerned with positive twists. We start by constructing a generating set for the integral $t$-motivic cohomology, namely the \emph{polylog classes}. Recall that by Proposition \ref{prop:extensions} we have an $\bF[t]$-linear morphism
\begin{equation}\label{eq:iota-for-context}
\iota:(t-\theta)^{-n}\bF[\theta,t]\longrightarrow \Ext^{1,\operatorname{reg}}_A(\mathbbm{1},\underline{A}(n))
\end{equation}
whose kernel is $\{\Delta p(t)~|~p(t)\in \bF[\theta,t]\}$. We slightly generalize the definition of \emph{polylog classes} given in the introduction:
\begin{Definition}\label{def:polylog}
Let $\bF[\theta,t]_{\deg_\theta<\frac{nq}{q-1}}$ be the set of polynomials whose degree with respect to $\theta$ is less than $\frac{nq}{q-1}$. For $e\in \bF[\theta,t]_{\deg_\theta<\frac{nq}{q-1}}$, we call  \emph{the polylog class of $e$}, and denote by $[L_n(e)]$, the class of the extension obtained as the image of $(t-\theta)^{-n}e$ through $\iota$. 
\end{Definition}

In virtue of Propositions \ref{prop:regulation} and \ref{prop:description-rB}, the extension $[L_n(e)]$ is  regulated and has analytic reduction at $\infty$; indeed, the assumption on $e$ ensures the convergence of the series $\xi_e\in \KI\langle t \rangle$ defined in \eqref{eq:xi}, which satisfies: $\displaystyle \Delta \xi_e=\frac{e}{(t-\theta)^n}$.

\subsubsection*{Proof of Theorem \ref{thm:polylogclass}}
We next show that polylog classes generate the integral $t$-motivic cohomology of~$\underline{A}(n)$, proving the first half of Theorem \ref{thm:polylogclass}.\\

The map $\xi:\bF[\theta,t]_{\deg_\theta<\frac{nq}{q-1}}\to \KI\langle t \rangle$, which assigns $\xi_e$ to $e$, is $\bF[t]$-linear and gives rise -- in conjunction with Corollary \ref{cor:formula-for-carlitz} -- to a commutative diagram:
\begin{equation}\label{eq:diagram-for-context}
\begin{tikzcd}
\bF[\theta,t]_{\deg_\theta<\frac{nq}{q-1}} \arrow[r,"\xi"]\arrow[d,"\times (t-\theta)^{-n}"'] & \displaystyle\frac{\{\xi\in \KI\langle t \rangle ~|~\Delta \xi \in (t-\theta)^{-n}\bF[\theta,t]\}}{\bF[\theta,t]+\mathbf{1}_{q-1|n}\bF[t]\omega(t)^n} \arrow[d,"\wr"] \\
(t-\theta)^{-n}\bF[\theta,t]_{\deg_\theta<\frac{nq}{q-1}} \arrow[r,"\iota"] & \Ext^{1,\operatorname{reg}}_{A,\infty}(\mathbbm{1},\underline{A}(n))
\end{tikzcd}
\end{equation}
where $\iota$ was given in \eqref{eq:iota-for-context}.

\begin{Proposition}\label{prop:generators-of-Hmot}
The upper horizontal map in diagram \eqref{eq:diagram-for-context}, when restricted to the submodule $\bF[\theta,t]_{\deg_\theta<n}$, is surjective. In particular,  if the family $(\alpha_1,\ldots,\alpha_n)$ is a basis of $\bF[\theta]_{\deg_\theta<n}$ over $\bF$, then the family of classes $([L_n(\alpha_1)],\ldots,[L_n(\alpha_n)])$ generates $\Ext^{1,\operatorname{reg}}_{A,\infty}(\bbone,\underline{A}(n))$ over $\bF[t]$.
\end{Proposition}
\begin{proof}
To prove the surjectivity, we consider $\xi\in \KI\langle t \rangle$ and $e\in \bF[\theta,t]$ such that $(t-\theta)^n\xi-\xi^{(1)}=e$. Because $\KI\cs{t}$ decomposes as $\bF[\theta,t]\oplus \fm_{\infty}\cs{t}$, up to subtracting from $\xi$ an element of $\bF[\theta,t]$ one can assume that the coefficients of $\xi$ are all in $\fm_{\infty}$, that is $\|\xi\|<1$. The relation among $\xi$ and $e$ then implies $\|e\|<q^n$; i.e. $\deg_\theta(e)<n$. Hence $\xi$ is equivalent to $\xi_e$ modulo an element of $\underline{A}(n)^+_{B}=\mathbf{1}_{q-1|n}\bF[t]\omega(t)^n$. 
\end{proof}

\begin{Remark}
Let $e\in \bF[\theta,t]_{\deg_\theta<\frac{nq}{q-1}}$. As a corollary of Proposition \ref{prop:generators-of-Hmot}, we get that any $\xi_e$ can be written as an $\bF[t]$-linear combination of $\xi_{e'}$, $\omega(t)^n$ and a polynomial in $\bF[\theta,t]$ for some $e'$ in $\bF[\theta,t]_{\deg_\theta<n}$. This could also be deduced from the relation \eqref{eq:polynomial-xi}.
\end{Remark}

The second half of Theorem \ref{thm:polylogclass} is achieved by the next proposition.

\begin{Proposition}\label{prop:polylogclass-linear-dep}
Let $\alpha_1,\ldots,\alpha_s$ be elements in $\bF[\theta]$ of degree $<\frac{qn}{q-1}$. For all tuples $(a_1(t),\ldots, a_s(t))\in \bF[t]^s$ the following are equivalent:
\begin{enumerate}[label=$(\arabic*)$]
    \item \label{item:relation-extension} $a_1(t)\cdot [\underline{L}(\alpha_1)]+\ldots+a_s(t)\cdot[\underline{L}(\alpha_s)]=0$ in $\Ext^{1,\operatorname{reg}}_{A,\infty}(\bbone,\underline{A}(n))$,
    \item\label{item:relation-xi-function} $a_1(t)\xi_{\alpha_1}(t)+\ldots+a_s(t)\xi_{\alpha_s}(t)\in \bF[\theta,t]+\bF[t]\cdot \omega(t)^n$,
    \item\label{item:relation-polylog} $a_1(\theta)\Li_n(\alpha_1)+\ldots+a_s(\theta)\Li_n(\alpha_s)\in \bF[\theta]\cdot \tilde{\pi}^n$ in $\KI^s$.
\end{enumerate}
\end{Proposition}

The proof of Proposition \ref{prop:polylogclass-linear-dep} uses the Anderson-Brownawell-Papanikolas (ABP) criterion in a crucial way. We recall its statement in the form we use (\emph{cf.} \cite[Thm. 3.1.1 + Prop. 3.1.3]{abp}).

\begin{Theorem*}[ABP criterion]
Fix a matrix $\Phi\in \Mat_\ell(\overline{K}[t])$ such that $\det \Phi$ is a polynomial in $t$ vanishing (if at all) only at $t=\theta$. Fix a (column) vector $\psi\in \Mat_\ell(\CI\langle t \rangle)$ satisfying the functional equation $\psi^{(-1)}=\Phi \psi$. Then, coefficients of $\psi$ are in $\CI\langle\langle t \rangle\rangle$ (\emph{cf.} \eqref{eq:entire-series}) and we can evaluate $\psi$ at $t=\theta$, thus obtaining a column vector $\psi(\theta) \in \Mat_{\ell\times 1}(\KI)$. For every (row) vector $\rho\in \Mat_{1\times \ell}(\overline{K})$ such that $\rho\psi(\theta) = 0$ there exists a (row) vector $P\in \Mat_{1\times \ell}(\overline{K}[t])$ such that $P(\theta) = \rho$, $P\psi=0$.
\end{Theorem*}

We are going to apply the ABP criterion to the system
\[
\psi^{(-1)}=\Phi \psi,
\]
where $\Phi$ and $\psi$ are the matrices
\begin{equation}\label{eq:ABP-system}
\Phi:=\begin{pmatrix}
(t-\theta)^n & 0 & \cdots & \cdots & 0 \\
-\alpha_1 & 1 & \ddots & & \vdots  \\
\vdots & \vdots & \ddots & \ddots & \vdots \\
-\alpha_s & 0 & \cdots & 1 & 0 \\
0 & 0 & \cdots & 0 & 1 
\end{pmatrix}, \quad \psi^{(-1)}:=\omega(t)^{-n}\begin{pmatrix} 1 \\
-\xi_{\alpha_1} \\
\vdots \\ -\xi_{\alpha_s} \\ \omega(t)^n
\end{pmatrix}.
\end{equation}
From the relation \eqref{eq:xi-and-polylog}, we have $\psi|_{t=\theta}=\tilde{\pi}^{-n}\cdot (1,
\alpha_1- \Li_n(\alpha_1),
\ldots, \alpha_s- \Li_n(\alpha_s),\tilde{\pi}^{n})$.

\begin{proof}[Proof of Proposition \ref{prop:polylogclass-linear-dep}]
Equivalence between \ref{item:relation-extension} and \ref{item:relation-xi-function} follows from the commutativity of diagram \eqref{eq:diagram-for-context}, and we are left to show the equivalence between \ref{item:relation-xi-function} and \ref{item:relation-polylog}.\\
We begin to show \ref{item:relation-xi-function}$\Longrightarrow$\ref{item:relation-polylog}. Assume:
\begin{equation}\label{eq:a-relation-among-xi-and-omega}
a_1(t)\xi_{\alpha_1}(t)+\ldots+a_s(t)\xi_{\alpha_s}(t)\in \bF[\theta,t]+\bF[t]\cdot \omega(t)^n.
\end{equation}
Multiplying the above by $(t-\theta)^n$ and evaluating at $t=\theta$ yields $a_1(\theta)\Li_n(\alpha_1)+\ldots+a_s(\theta)\Li_n(\alpha_s)\in \bF[\theta]\cdot \tilde{\pi}^n$, proving \ref{item:relation-polylog}. \\
We will prove \ref{item:relation-polylog}$\Longrightarrow$\ref{item:relation-xi-function} using induction on $s$. The case $s=0$ is trivial, so we assume $s>0$ and that the proposition holds for the $s-1$ elements $\alpha_1,\ldots, \alpha_{s-1}$. Suppose $(a_1,\ldots, a_s)\in \bF[t]^s$ satisfies
\ref{item:relation-polylog}: that is, we have 
\begin{equation}\label{eq:a-relation-among-Li-and-pi}
 a_1(\theta)\Li_n(\alpha_1)+\ldots+a_s(\theta)\Li_n(\alpha_s)+b(\theta)\tilde{\pi}^n=0
\end{equation}
for some $b(\theta)\in \bF[\theta]$. If $a_s(\theta)=0$ and therefore $a_s(t)=0$, the induction hypothesis implies that $(a_1,\ldots, a_s)$ also satisfies \ref{item:relation-extension}. So we may assume that $a_s(\theta)\ne 0$.

In order to lift the relation \eqref{eq:a-relation-among-Li-and-pi} to functions $\xi$, we rewrite it as:
\begin{align*}
0&= \tilde{\pi}^{-n} \left( 
\sum_i a_i(\theta)\alpha_i - a_1(\theta) (\alpha_1-\Li_n(\alpha_1))-\cdots - a_s(\theta)(\alpha_s-\Li_n(\alpha_s))
+ b(\theta)\tilde{\pi}^n \right) \\
&= \rho \cdot \psi(\theta)
\end{align*}
where $\rho$ denotes the row vector $(\sum_i a_i(\theta)\alpha_i, -a_1(\theta),\ldots, -a_s(\theta), b(\theta))$. We are in the range of application of the ABP criterion to the system \eqref{eq:ABP-system} described above. It implies that there exists a row vector $P(t)=(\tilde{a}_0,-\tilde{a}_1,\ldots,-\tilde{a}_s,\tilde{b})$ with coordinates in $\overline{K}[t]^{s+1}$ such that $P(\theta)=\rho$ and $P\cdot \psi=0$. We get:
\begin{equation}\label{eq:P*psiext}
\tilde{a}_1(t) \xi_{\alpha_1}(t)+\cdots +\tilde{a}_s(t) \xi_{\alpha_s}(t)
+\tilde{b}\omega^n(t)= \frac{1}{(t-\theta)^n}\left(\sum_{i}{\tilde{a}_i\alpha_i}-\tilde{a}_0\right). 
\end{equation}
In particular, this implies $(\tilde{a}_1,\ldots,\tilde{a}_s,\tilde{b})\in \cZ(\underline{\alpha})$, and we deduce that there exists a tuple in $\cZ(\underline{\alpha})$ whose $s$-th coordinate is non-zero.

Since $\cZ(\underline{\alpha})$ is generated by elements with coordinates in $\bF[t]^{s+1}$ (Proposition \ref{prop:invariant-generators}), there is some tuple $(c_1,\ldots, c_s,d)$ in $\bF[t]^{s+1}\cap \cZ(\underline{\alpha})$ with $c_s\ne 0$. By Lemma \ref{lem:equiv-cond-for-tuple}, the tuple $(c_1,\ldots, c_s)$ satisfies \ref{item:relation-xi-function}, and thus also \ref{item:relation-polylog} by the first part of the proof. Therefore, the linear combination
\[ v=c_s\cdot (a_1,\ldots, a_s)-a_s\cdot (c_1,\ldots,c_s)\]
satisfies \ref{item:relation-polylog}. As the last component of $v$ is $0$, the induction hypothesis implies that $v$ also satisfies \ref{item:relation-xi-function}.
We deduce that
\[ c_s\cdot (a_1,\ldots, a_s)=v+a_s\cdot (c_1,\ldots,c_s)\]
satisfies \ref{item:relation-xi-function}, and therefore
$(c_sa_1,\ldots, c_sa_s,d)\in \cZ(\alpha)$ for some $d\in \bF[t]$, by Lemma \ref{lem:equiv-cond-for-tuple}. The implication \ref{item:relation-xi-function} $\Rightarrow$ \ref{item:relation-polylog} yields
\[ c_s(\theta) a_1(\theta)\Li_n(\alpha_1)+\ldots+c_s(\theta)a_s(\theta)\Li_n(\alpha_s)+d(\theta)\tilde{\pi}^n=0. \]
On the other-hand, by multiplying \eqref{eq:a-relation-among-Li-and-pi} by $c_s(\theta)$, we get
\[ c_s(\theta) a_1(\theta)\Li_n(\alpha_1)+\ldots+c_s(\theta)a_s(\theta)\Li_n(\alpha_s)+c_s(\theta)b(\theta)\tilde{\pi}^n=0. \]
Subtracting the two relation gives $d(\theta)=c_s(\theta)b(\theta)$.
This implies
\[
(a_1,\ldots, a_s,b)=\frac{1}{c_s}\cdot (c_sa_1,\ldots, c_sa_s,d)
\in \cZ(\alpha)_{\overline{K}(t)}\cap \bF[t]^{s+1}= \cZ(\alpha)\cap \bF[t]^{s+1}.
\]
To conclude, we apply Lemma \ref{lem:equiv-cond-for-tuple} again to deduce that $(a_1,\ldots, a_s)$ satisfies \ref{item:relation-xi-function}.
\end{proof}

\subsubsection*{Proof of Theorem \ref{thm:mot-coh-car}}
We now turn to the proof of Theorem \ref{thm:mot-coh-car}, namely:
\begin{Theorem}\label{thm:positive-twists}
For all positive integers $n$, we have isomorphisms of $\bF[t]$-modules:
\[
\Ext^{1,\operatorname{reg}}_{A,\infty}(\mathbbm{1},\underline{A}(n)) \cong \begin{cases} \bF[t]^{n} & \text{if~} q-1\nmid n, \\ \bF[t]^{n-1}\oplus \bF[t]/(\varepsilon_n(t)) & \text{if~} q-1\mid n,  \end{cases}
\]
where, for $n>0$ and $q-1|n$, the polynomial $\varepsilon_n$ is determined as follows. Let $m=\frac{n}{q-1}$, and write $m=p^c m_0$ for $c\geq 0$ and $m_0>0$ prime to $p$. Let also $\ell$ be the least common multiple of the integers $r$ such that $q^r-1|n$. Then, we have
\[
\varepsilon_n(t)=\left(t^{q^\ell}-t \right)^{p^c}.
\]
\end{Theorem}

\begin{proof}
Let $\varphi:\bF[\theta,t]_{\deg_\theta<n}\to \Ext^{1,\operatorname{reg}}_{A,\infty}(\bbone,\underline{A}(n))$ be the $\bF[t]$-linear map given by either the upper or lower composition in diagram \eqref{eq:diagram-for-context}. By Proposition \ref{prop:generators-of-Hmot}, $\varphi$ is surjective. \\
If $q-1\nmid n$, Corollary \ref{cor:formula-for-carlitz} states that $\Ext^{1,\operatorname{reg}}_{A,\infty}(\bbone,\underline{A}(n))$ has rank $n$, hence the kernel of $\varphi$ is trivial, and therefore $\varphi$ is an isomorphism, which implies the result. \\
If $q-1|n$, Corollary \ref{cor:formula-for-carlitz} states that $\Ext^{1,\operatorname{reg}}_{A,\infty}(\bbone,\underline{A}(n))$ has rank $n-1$, hence the kernel of $\varphi$ is an $\bF[t]$-submodule of rank $1$. Therefore, the image of $\varphi$, namely $\Ext^{1,\operatorname{reg}}_{A,\infty}(\bbone,\underline{A}(n))$, has the form $\bF[t]^{n-1}\oplus \bF[t]/(d(t))$ for some monic polynomial $d(t)\in \bF[t]$. The relation among a generator of $\ker \varphi$ and $d(t)$ is given as follows:
\begin{Fact}
Let $e\in \bF[\theta,t]_{\deg_\theta<n}$ be a generator of $\ker \varphi$.
Then the polynomial ${d(t)\in\bF[t]}$ is the maximal polynomial in $\bF[t]$ which divides $e$.
\end{Fact}

We claim that the polynomial $e_n(t)$ constructed in \eqref{eq:construction-e} is a generator of $\ker \varphi$; this would finish the proof by Proposition \ref{prop:gcd-explicit}.\\
From Lemma \ref{lem:en-and-gn}, and as $(q-1)|n$, $e_n(t)\in \bF[\theta,t]$. In particular, we deduce from Proposition \ref{prop:Z-alpha} that any $\overline{K}[t]$-linear relation of the form
\[
z_1(t)\xi_{\alpha_1}(t)+\ldots + z_s(t)\xi_{\alpha_s}(t)+y(t)\omega(t)^n\in \overline{K}(t),
\]
where $(\alpha_1,\ldots, \alpha_s)$ is any basis of $\bF[\theta]_{\deg_\theta<n}$ over $\bF$, is a $\overline{K}[t]$-multiple of the relation
\[
\xi_e(t)+\omega(t)^n=g_n(t)\in \bF[\theta,t]
\]
(we used Lemma \ref{lem:en-and-gn} again). Hence, by Corollary \ref{cor:formula-for-carlitz}, $e_n(t)$ is a generator of $\ker \varphi$.
\end{proof}

\section{Algebraic independence of generalized Carlitz polylogarithms}\label{sec:alg-ind}
In this section, we achieve the proof of Theorem \ref{thm:algebraic-independence-of-higher-Carlitz-polylog} and its consequences stated in the introduction. Recall that we denoted by $K$ the field $\bF(\theta)$ and by $\overline{K}$ the algebraic closure of $K$ inside $\CI$. Let $\alpha_1,\ldots,\alpha_s$ be elements in $\overline{K}$ of norm $<q^n$ and let $L$ be the subfield of $\overline{K}$ generated by $\alpha_1,\ldots,\alpha_s$ over $\bF(\theta)$. We denote by $\cO_L$ be the integral closure of $\bF[\theta]$ in $L$. Let also $n$ be a positive integer. 

In order to prove Theorem \ref{thm:algebraic-independence-of-higher-Carlitz-polylog}, we shall study the algebro-difference properties of the series $\xi_{\alpha_i}=\xi_{\alpha_i,n}(t)$ introduced in \eqref{eq:xi}. We shall consider the following assumptions:
\begin{enumerate}[label=$(\Alph*)$]
\item\label{assumption:pnotdividen} $n$ is prime to the characteristic $p$ of $\bF$,
\item\label{assumption:linearly-independent} no $\bF[t]$-linear relation among $(\omega^n,\xi_{\alpha_1},\ldots,\xi_{\alpha_s})$ belongs to the subring $\cO_L[t]$ of $\CI(\!( t )\!)$.
\end{enumerate}
\begin{enumerate}[label=$(B')$]
\item\label{assumption:linearly-independent-prime} no $\bF[t]$-linear relation among $(\omega^n,\xi_{\alpha_1},\ldots,\xi_{\alpha_s})$ belongs to the subfield $L(t)$ of $\CI(\!( t )\!)$.
\end{enumerate}
\begin{Remark}
Assumptions \ref{assumption:linearly-independent} and \ref{assumption:linearly-independent-prime} are in fact equivalent, by Proposition \ref{prop:linear-relation-among-xi}.
\end{Remark}

We denote by $\partial_t^{(k)}$ the $k$th \emph{hyperdifferential operator}, that is, the unique $t$-adically continuous $\CI$-linear map $\CI(\!(t)\!)\to \CI(\!(t)\!)$ satisfying 
\[
\hdt{k}{t^m}=\binom{m}{k}t^{m-k}.
\]
on monomials. Opposed to the classical derivation, one has $\partial_t^{(a)}\circ \partial_t^{(b)}=\binom{a+b}{a}\partial_t^{(a+b)}$. Accordingly, the Leibniz rule becomes:
\begin{equation}\label{eq:leibniz}
\hdt{k}{fg}=\sum_{\ell=0}^k{\hdt{\ell}{f}\hdt{k-\ell}{g}}.
\end{equation}
We call a family of functions $(f_1,...,f_s)$ of $\CI(\!(t)\!)$ \emph{differentially algebraic independent}\footnote{One also encounters the terminology \emph{hypertranscendental}, see \cite{maurischat}} if the family $(\hdt{j_1}{f_1},...,\hdt{j_s}{f_s}|j_1,...,j_s\geq 0)$ is algebraically independent over $\bF(\theta,t)$.\\

In \cite[Thm.~1.3]{maurischat} the second author showed that $\omega$ is differentially algebraic independent. As a first step towards Theorem \ref{thm:algebraic-independence-of-higher-Carlitz-polylog}, we extend this result further to:
\begin{Theorem}\label{thm:main-theorem}
Assume both \ref{assumption:pnotdividen} and \ref{assumption:linearly-independent}. Then, the family $(\omega^n,\xi_{\alpha_1},\ldots,\xi_{\alpha_s})$ is differentially algebraic independent over $\overline{K}(t)$.
\end{Theorem}

The proof of this result will require to show first that they are differentially linearly independent; this is done "by hand" in Subsection \ref{sec:linear-indep} using the difference equations they satisfy. The algebraic independence will then follow from a careful analysis of the difference Galois group in Subsection \ref{sec:algebraic-indep}. There, we will use freely results from Picard-Vessiot theory, recalled in the appendix \ref{app:picard-vessiot}. In Subsection \ref{subsec:algebraic-indep-polylog}, we apply the criterion of Anderson-Brownall-Papanikolas \cite{abp} in the form used by Papanikolas in \cite{papanikolas} to derive Theorem \ref{thm:algebraic-independence-of-higher-Carlitz-polylog}. In Section \ref{sec:regulator-A(n)}, we explain how to derive Proposition \ref{prop:equivalence-beilinson-for-carlitz} from the reference \cite{gazda2}. \\

In the next subsections, unless stated otherwise, we assume both \ref{assumption:pnotdividen} and \ref{assumption:linearly-independent}.

\subsection{Linear independence}\label{sec:linear-indep}
This subsection is devoted to the first step of the main proof, that is:
\begin{Proposition}\label{prop:linear-independence}
The family $(1,\hdt{k}{\frac{\xi_{\alpha_i}}{\omega^n}}|k\geq 0)$ is linearly independent over the field $L(t)\left( \hdt{j}{\omega^n}\mid j\geq 0\right)$.
\end{Proposition}

The proof of Proposition \ref{prop:linear-independence} will require the knowledge of the main properties of the following difference equation:
\[ x^{(1)}-(t-\theta)^n x= g \]
for various elements $g$. In this respect, we start by studying the structure of the field $L(t)\left( \hdt{j}{\omega^n}\mid j\geq 0\right)$ where $g$ will belong to, and its behavior under difference operators. To preserve integral elements, we recast the definition of the operator $\Delta$ as:
\begin{equation}\label{eq:Delta(f)-def}
\Delta: f\mapsto f^{(1)}-(t-\theta)^nf.
\end{equation}

\subsubsection*{Difference equation and prolongation}
For $k$ a non negative integer, we set\footnote{Observe that some $y_k$ might be zero if $n$ is a multiple of $p$; more precisely, $y_k$ is zero whenever $p$ divides $\frac{n}{k}$. This is not the case here thanks to our assumption \ref{assumption:pnotdividen}.}
\[
y_k:=\omega^n\hdt{k}{\frac{1}{\omega^n}}.
\]
In particular, $y_0=1$. These elements are better understood using \emph{prolongation matrices} as in \cite{maurischat}. Recall that they are defined by assigning to square matrices $M$ of size $s\geq 1$ square matrices $\rho_{[k]}(M)$ of size $s(k+1)$ via the formula
\[
\rho_{[k]}(M)=\begin{pmatrix}M & \hdt{1}{M} & \cdots & \hdt{k}{M} \\ 0 & M & \ddots & \vdots \\ \vdots & \ddots & \ddots & \hdt{1}{M} \\ 0 & \cdots & 0 & M
\end{pmatrix}
\]
where the notation $\hdt{k}{M}$ means applying $\partial_t^{(k)}$ to each entry of $M$. From \eqref{eq:leibniz}, one deduces that the resulting map $\rho_{[k]}:\Mat_{s}(\CI[\![t]\!])\to \Mat_{s(k+1)}(\CI[\![t]\!])$ is a $\CI$-algebra homomorphism. As a consequence, we obtain a difference equation satisfied by the $y_j$, recognizing that
\begin{equation}\label{eq:matrix-for-yk}
\omega^{n}\rho_{[k]}(\omega^{-n})
= \begin{pmatrix}1 & y_1 & \cdots & y_k \\ 0 & 1 & \ddots & \vdots \\ \vdots & \ddots & \ddots & y_1 \\ 0 & \cdots & 0 & 1 
\end{pmatrix}.
\end{equation}
Namely, from the difference equation $(\omega^n)^{(1)}=(t-\theta)^n\omega^n$, we obtain
\begin{equation}\label{eq:relation-yk}
\left(\omega^{n}\rho_{[k]}(\omega^{-n})\right)^{(1)}=\left((t-\theta)^{n}\rho_{[k]}((t-\theta)^{-n})\right)\cdot \left(\omega^{n}\rho_{[k]}(\omega^{-n})\right).
\end{equation}

We introduce an increasing family of subrings of $\CI\ls{t}$:
\[
k\geq 0:\quad R_k:= L(t)[y_1,...,y_k]
\]
and we let $R$ be their union. We let $F_k$ be the quotient field of $R_k$ and $F =\bigcup_{k\geq 0} F_k$ their union (that is, the quotient field of $R$). From \eqref{eq:relation-yk}, the twist $f\mapsto f^{(1)}$ preserves $R_k$, hence $R$, $F_k$ and $F$. \\

\begin{Remark}\label{rem:discription-F-omega-n}
From the identity
\[
\begin{pmatrix}\omega^n & \hdt{1}{\omega^n} & \cdots & \hdt{k}{\omega^n} \\ 0 & \omega^n & \ddots & \vdots \\ \vdots & \ddots & \ddots & \hdt{1}{\omega^n} \\ 0 & \cdots & 0 & \omega^n
\end{pmatrix} =\omega^n\cdot \left( 
\omega^{n}\rho_{[k]}(\omega^{-n})\right)^{-1}
=\omega^n\cdot \begin{pmatrix}1 & y_1 & \cdots & y_k \\ 0 & 1 & \ddots & \vdots \\ \vdots & \ddots & \ddots & y_1 \\ 0 & \cdots & 0 & 1 
\end{pmatrix}^{-1},
\]
valid for all $k\geq 1$, we see that the field $F(\omega^n)$ is the same as $L(t)( \hdt{j}{\omega^n}\mid j\geq 0)$.
\end{Remark}

\subsubsection*{Properties of the ring $R$}
In view of \cite[Thm.~1.3]{maurischat}, the assumption \ref{assumption:pnotdividen} ensures that $y_1$, ..., $y_k$ are algebraically independent over $L(t)$ for all $k\geq 1$. In particular, $R_k$ is a polynomial ring in $k$ variables over $L(t)$. On $R_k$, we consider the  \emph{lexicographic degree}:  
\[
\operatorname{deglex}_k:R_k=L(t)[y_1,\ldots,y_k]\longrightarrow (\bN^k\cup \{-\infty\},<),
\]
where $<$ denotes the lexicographic order and $-\infty$ a smallest element, defined on monomials by $\operatorname{deglex}_k(a\cdot y_1^{r_1}\cdots y_k^{r_k})=(r_k,\ldots,r_1)$ if $a\neq 0$ and $-\infty$ otherwise, then extended to $R_k$ by
\[
\operatorname{deglex}_k\left(f\right):=\max\left\{\operatorname{deglex}_k(a_{r_1,\ldots,r_k}\cdot y_1^{r_1}\cdots y_k^{r_k})\right\},
\]
where $f=\sum_{(r_1,\ldots,r_k)}{a_{r_1,\ldots,r_k}\cdot y_1^{r_1}\cdots y_k^{r_k}}$. In particular, $\deg(f)=-\infty$ if and only if $f=0$. In addition, $\operatorname{deglex}$ is \emph{multiplicative}, namely:
\begin{equation}\label{eq:deglex-multiplicative}
\operatorname{deglex}(fg)=\operatorname{deglex}(f)+\operatorname{deglex}(g)
\end{equation}
where addition is understood coordinate-wise in $\bN^k$. If $f$ is non zero and $\operatorname{deglex}_k(f)=(d_k,\ldots,d_1)$, then we call $a_{d_1,\ldots,d_k}\in L(t)\setminus\{0\}$ the \emph{leading coefficient of $f$}.

\begin{Lemma}\label{lem:leading-coefficient}
Let $f\in R_k\setminus\{0\}$ with leading coefficient $\beta$. Then, 
\[
\operatorname{deglex}_k(\Delta(f))=\operatorname{deglex}_k(f),
\]
in particular $\Delta(f)\neq 0$, and its leading coefficient is $\Delta(\beta)=\beta^{(1)}-(t-\theta)^n\beta$.
\end{Lemma}

\begin{proof}
We deduce from \eqref{eq:relation-yk} (or from a direct computation) that $y_j^{(1)}=y_j+r_j$ for some linear combination $r_j\in R_{j-1}$ of $1$, $y_1,\ldots,y_{j-1}$. In particular, the order of $r_j$ is smaller than that of $y_j$.
Hence, if $f=\beta\cdot y_1^{r_1}\cdots y_k^{r_k}+(\text{terms~of~smaller~degree})$, then
\begin{align*}
f^{(1)}&=(\beta\cdot y_1^{r_1}\cdots y_k^{r_k})^{(1)}+(\text{terms~of~smaller~degree}) \\
&=\beta^{(1)}\cdot y_1^{r_1}\cdots y_k^{r_k}+(\text{terms~of~smaller~degree}).
\end{align*} 
Therefore, it remains to exclude the case where the expression $\beta^{(1)}-(t-\theta)^n\beta$ equals $0$. Yet, any such solution is a $\bF(t)$-multiple of $\omega^n$, hence is different from $\beta\in L(t)\setminus \{0\}$. 
\end{proof}

\begin{Lemma}\label{lem:integral-solution}
If $r\in F$ satisfies $\Delta(r)\in R$, then $r\in R$.
\end{Lemma}

\begin{proof}
Let $k$ be the minimal number such that $r\in F_k$ and thus $\Delta(r)\in R_k$. We write $r=f/g$ with $f,g\in R_{k}$. Because $R_k$ is a unique factorization domain, we can further assume that $f/g$ is written in lowest terms. We also assume that the leading coefficient of $g$ is $1$. By assumption, 
\[
\Delta(r)=\frac{f^{(1)}}{g^{(1)}}-(t-\theta)^n \frac{f}{g}\in R_k.
\]
We shall use the following:
\begin{Fact}\label{fact:UFD}
Let $a_1,b_1,a_2,b_2$ be elements in a unique factorization domain $A$ with $b_i\neq 0$ and $a_2$ coprime to $b_2$. If the relation $a_1/b_1-a_2/b_2\in A$ holds in the quotient field of $A$, then $b_2$ divides $b_1$. 
\end{Fact}
\begin{proof}[Proof of the fact]
Let $\pi$ be an irreducible element of $A$ with associated valuation $v_\pi$, and assume that $\pi$ divides $b_2$. In particular, $v_\pi(a_2)=0$, hence $v_\pi(b_1)\geq v_\pi(b_2)$ from the relation. Being true for all irreducible elements dividing $b_2$, we conclude. 
\end{proof}
Although $f/g$ is reduced, the quotient $f^{(1)}/g^{(1)}$ might not be. Using Fact \ref{fact:UFD} and the assumption on the leading coefficient of $g$, we get that $g$ divides $g^{(1)}$; let $u\in R_k$ be such that $g^{(1)}=ug$. By \eqref{eq:deglex-multiplicative} and as $\operatorname{deglex}(g^{(1)})=\operatorname{deglex}(g)$, we obtain $u\in L(t)$. Because the leading coefficient of $g$ (and also of $g^{(1)}$) is $1$, we get $u=1$, thus $g^{(1)}=g$, i.e.~$g\in \bF(t)$ and then $g=1$. That is $r=f\in R_k$.
\end{proof}

\subsubsection*{Proof of the linear independence}
\begin{Lemma}\label{lem:difference-equation-hd-xi-omega}
The element $\hdt{j}{\frac{\xi_{\alpha_i}}{\omega^n}}$ satisfies the following difference equation:
\[
\left( \hdt{j}{\frac{\xi_{\alpha_i}}{\omega^n}}\right)^{(1)} = 
\hdt{j}{\frac{\xi_{\alpha_i}}{\omega^n}} - \alpha_i \cdot \left(\hdt{j}{\frac{1}{\omega^n}}\right)^{(1)},
\]
\end{Lemma}

\begin{proof}
This follows from the equations $\Delta \xi_{\alpha_i}=-\alpha_i$, $\Delta \omega^n=0$ -- where $\Delta$ is as in \eqref{eq:Delta(f)-def} --  together with the fact that hyperdifferentiation and twists commute.
\end{proof}

\begin{proof}[Proof of Proposition \ref{prop:linear-independence}] 
Assume that for some $k\geq 0$, there is a non trivial linear relation
\begin{equation}\label{eq:linear-relation}
\sum_{j=0}^k \sum_{i=1}^s  c_{i,j}\hdt{j}{\frac{\xi_{\alpha_i}}{\omega^n}} +d=0,
\end{equation}
for coefficients $d$, $c_{i,j}\in F(\omega^n)$, $i\in \{1,...,s\}$, $0\leq j\leq k$, and some $c_{i,k}$ non-zero. Among all such relations, we assume that \eqref{eq:linear-relation} was taken to be minimal; i.e. $k$ minimal and for such $k$ the number $\#\{ i \mid c_{i,k}\ne 0\}$ minimal. After multiplying \eqref{eq:linear-relation} by some non zero element in $F(\omega^n)$, we can further assume that some coefficient $c_{i,k}$ equals $1$. In particular, subtracting \eqref{eq:linear-relation} from its twist, and using Lemma \ref{lem:difference-equation-hd-xi-omega} gives the shorter relation
\begin{eqnarray*} 0&=&\sum_{j=0}^k \sum_{i=1}^s  c_{i,j}^{(1)}\left( \hdt{j}{\frac{\xi_{\alpha_i}}{\omega^n}}\right)^{(1)}  +d^{(1)} - \sum_{j=0}^k\sum_{i=1}^s c_{i,j}\hdt{j}{\frac{\xi_{\alpha_i}}{\omega^n}} +d\\
&=& \sum_{j=0}^k \sum_{i=1}^s 
\left( c_{i,j}^{(1)}-c_{i,j}\right)\cdot \hdt{j}{\frac{\xi_{\alpha_i}}{\omega^n}} + d^{(1)} -d -
\sum_{j=0}^k \sum_{i=1}^s  c_{i,j}^{(1)} \alpha_i \cdot \left(\hdt{j}{\frac{1}{\omega^n}}\right)^{(1)}.
\end{eqnarray*}
Since, by assumption on minimality there is no shorter relation, all the coefficients in the relation have to vanish. Hence, we deduce $c_{i,j}=c_{i,j}^{(1)}$ and then $c_{i,j}\in \bF(t)$. The vanishing of the remaining term yields a difference equation satisfied by $d\in F(\omega^n)$:
\begin{equation}\label{eq:dtwist-d}
d^{(1)}-d=(\omega^{-n})^{(1)}\cdot  \sum_{j=0}^{k} e_j \left(  \omega^n\hdt{j}{\frac{1}{\omega^n}}\right)^{(1)}
=(\omega^{-n})^{(1)}\cdot  \sum_{j=0}^{k} e_j y_j^{(1)},
\end{equation}
where we set $e_j:=\sum_{i} c_{i,j}\alpha_i\in \cO_L[t]\otimes_{\bF[t]}\bF(t)$. \\

We claim that \eqref{eq:dtwist-d} has no solution in $F(\omega^n)$, adducing the contradiction. As $\omega$ is transcendental over $F$, $F(\omega^n)$ is isomorphic to the field of rational fractions over $F$. In addition, the difference operator $x\mapsto x^{(1)}-x$ on $F\ls{\omega^n}$ preserves the powers of $\omega^n$. Hence, after embedding $F(\omega^n)$ in $F(\!(\omega^n)\!)$, one recognizes that $d\in \omega^{-n}\cdot F+\bF(t)$. Subtracting an element of $\bF(t)$ to $d$, we may assume that \eqref{eq:dtwist-d} has a solution $d\in \omega^{-n}\cdot F$. 

This hints that \eqref{eq:dtwist-d} is better understood after renormalizing $\tilde{d}:=\omega^n d\in F$. Multiplying \eqref{eq:dtwist-d} by $(\omega^n)^{(1)}=(t-\theta)^n\omega^n$ yields
\[
\Delta(\tilde{d})= \tilde{d}^{(1)}-(t-\theta)^n\tilde{d} =  \sum_{j=0}^{k} e_j y_j^{(1)} \in e_k\cdot y_k+R_{k-1}.
\]
In particular, Lemma \ref{lem:integral-solution} implies $\tilde{d}\in R$.
Furthermore by Lemma \ref{lem:leading-coefficient}, the leading coefficient $\beta\in L(t)\setminus\{0\}$ of $\tilde{d}$ satisfies
\begin{equation}\label{eq:difference-ek}
\beta^{(1)}-(t-\theta)^n \beta = e_k\ne 0.
\end{equation}
However, because of \ref{assumption:linearly-independent}, this equation does not have a solution in $L(t)$; indeed, $f:=\beta-\xi_{e_k}$ satisfies $f^{(1)}=(t-\theta)^n f$, hence is a $\bF[t]$-multiple of $\omega^n$. Yet, this is impossible by assumption \ref{assumption:linearly-independent}.
\end{proof}

\subsection{Algebraic independence}\label{sec:algebraic-indep}
In this paragraph, we complete the proof of Theorem \ref{thm:main-theorem}. As we already know from \cite[Thm.~1.3]{maurischat} that the family $(\hdt{j}{\omega^n}\mid j\geq 0)$ is algebraically independent, the statement of the theorem is equivalent to

\begin{Proposition}\label{prop:algebraic-independence-xi-and-hyper}
For all $k\geq 0$, the family
$(\omega^n,\hdt{j}{\xi_{\alpha_i}}\mid i\in\{1,\ldots,s\},j\in\{0,\ldots,k \})$ is algebraically independent over $F$.
\end{Proposition}

The proof of this proposition consists in an application of difference Galois theory using the linear independence result proved in the previous paragraph. The strategy of the proof is quite similar to that of Papanikolas in \cite{papanikolas}.
The necessary background on difference Galois theory is given in appendix \ref{app:picard-vessiot}.

\begin{Remark}\label{rem:generators-of-cR}
As we considered the elements $\hdt{j}{\frac{\xi_{\alpha_i}}{\omega^n}}$ in the previous paragraph we shall use them here as well, instead of $\hdt{j}{\xi_{\alpha_i}}$. This does not affect the argument, since the field extension of $F$ generated be $\omega^n$ and $\hdt{j}{\frac{\xi_{\alpha_i}}{\omega^n}}$ is the same as the one generated by the family $(\omega^n,\hdt{j}{\xi_{\alpha_i}}\mid i\in\{1,\ldots,s\},j\in\{0,\ldots,k \})$.
\end{Remark}

\subsubsection*{Some notations for matrices}
We shall introduce shorter notations for some matrices occurring next. Let $S$ denote an arbitrary $\bF(t)$-algebra. 
\begin{enumerate}[label=$-$]
\item An underlined letter $\underline{b}$ will stand for an $s$-tuple $(b_1,\ldots,b_s)$ of elements in $S$. 
\item For $a,c\in S$ and $\underline{b}$ an $s$-tuple in $S^s$, we will use the following for short:
\[
\begin{pmatrix} a & \underline{b} \\ & c \end{pmatrix}:=\begin{pmatrix}
a & b_1 & b_2 & \cdots & b_s \\ 0 & c & 0 & \cdots & 0 \\ 0 & 0 & c & & \vdots \\ \vdots & \vdots & & \ddots & 0 \\ 0 & 0 & \cdots & 0 & c
\end{pmatrix}
\in \Mat_{s+1}(S).
\]
We consider the matrices:
\[ \Xi := \begin{pmatrix}1 & \omega^{-n}\underline{\xi} \\  & \omega^{-n} \end{pmatrix} \in \GL_{s+1}(\CI\langle t \rangle),\quad \Theta = \begin{pmatrix}1 & -(t-\theta)^{-n}\underline{\alpha} \\  & (t-\theta)^{-n} \end{pmatrix}\in \GL_{s+1}(\bF(\theta,t)),  \]
where $\underline{\xi}=(\xi_{\alpha_1},\ldots, \xi_{\alpha_s})$ and $\underline{\alpha}=(\alpha_1,\ldots,\alpha_s)$. 
\item We extend this notation to block matrices in the obvious way, e.g. for prolongation matrices:
\end{enumerate}
\begin{equation}
\rho_{[k]}(\Xi)=
\begin{pmatrix}
\begin{matrix} 1\hphantom{1} &  \omega^{-n}\underline{\xi} \\ &  \omega^{-n}  \end{matrix} & \vline & \begin{matrix} 0&  \hdt{1}{\omega^{-n}\underline{\xi}} \\ &  \omega^{-n}y_1\end{matrix} & \vline &  \cdots & \vline & \begin{matrix} 0 &  \hdt{k}{\omega^{-n}\underline{\xi}} \\ & \omega^{-n}y_k \end{matrix} \\
\hline
 & \vline & \begin{matrix} 1\hphantom{1m}  & \omega^{-n}\underline{\xi} \hphantom{11} \\ &  \omega^{-n}\hphantom{11}   \end{matrix} & \vline & \ddots & \vline & \vdots \\
\hline
 & \vline & & \vline & \ddots & \vline & \begin{matrix} 0 &  \hdt{1}{\omega^{-n}\underline{\xi}} \\ &  \omega^{-n}y_1 \end{matrix} \\
\hline
 & \vline & & \vline & & \vline &\begin{matrix} 1\hphantom{1m}  & \omega^{-n}\underline{\xi} \hphantom{11} \\ &  \omega^{-n}\hphantom{11}
 \end{matrix}
\end{pmatrix}\in \GL_{(s+1)(k+1)}(\CI\langle t \rangle). \nonumber
\end{equation}
where empty blocks are zero by convention. Observe that $\Xi$ is a fundamental solution matrix of the difference equation~$X^{(1)} = \Theta \cdot X$. Consequently, $\rho_{[k]}(\Xi)$ is a fundamental solution matrix for
\begin{equation}\label{eq:fundamental-solution-rhok}
X^{(1)}=\rho_{[k]}(\Theta)X.
\end{equation}

\subsubsection*{The difference Galois group}
A \emph{Picard-Vessiot ring} $\cR$ for the difference equation \eqref{eq:fundamental-solution-rhok}
 -- considered over the field $F$ -- is given by
\[
\cR = F[\rho_{[k]}(\Xi),\rho_{[k]}(\Xi)^{-1}] =F\left[\omega^n,\omega^{-n},\hdt{j}{\frac{\xi_{\alpha_i}}{\omega^n}} \,\middle|\, i\in\{1,\ldots,s\},j\in\{0,\ldots,k \}\right]
\]
(see Subsection \ref{subsec:Def-PicVess} of the appendix). To study the algebraic relations among the coefficients of $\rho_{[k]}(\Xi)$, we are led to study its \emph{difference Galois group} $\underline{\operatorname{Gal}}(\cR/F)$.

Given $S$ an  $\bF(t)$-algebra, $a\in S^\times, \underline{b},\underline{c},\ldots, \underline{e}\in S^s$, we write $\gamma_k(a,\underline{b},\underline{c},\ldots,\underline{e})$ for the diagonal block matrix:
\[
\gamma_k(a,\underline{b},\underline{c},\ldots,\underline{e}):=
\begin{pmatrix}
\begin{matrix} 1 & \underline{b} \\ & a \end{matrix} & \vline & \begin{matrix} 0 & \underline{c} \\ & 0 \end{matrix} & \vline &  \cdots & \vline & \begin{matrix} 0 & \underline{e} \\ & 0 \end{matrix} \\
\hline
 & \vline & \begin{matrix} 1 & \underline{b} \\ & a \end{matrix} & \vline & \ddots & \vline & \vdots \\
\hline
 & \vline & & \vline & \ddots & \vline & \begin{matrix} 0 & \underline{c} \\ & 0 \end{matrix} \\
\hline
 & \vline & & \vline & & \vline & \begin{matrix} 1 & \underline{b} \\ & a \end{matrix}
\end{pmatrix}\in \Mat_{(s+1)(k+1)}(S).
\]
Above, the letter "$e$" is suitably interpreted as the $k$th letter in an (infinite) alphabet. We further set
\[
\Gamma_k(S)=\left\{\gamma_k(a,\underline{b},\underline{c},...,\underline{e})~|~a,\in S^\times,~\underline{b},\underline{c},...,\underline{e}\in S^s\right\}.
\]

\begin{Lemma}
The functor $\Gamma_k$ defines a Zariski-closed algebraic subgroup of $\GL_{(s+1)(k+1),\bF(t)}$, and is isomorphic to the semi-direct product $\bG_m\rtimes \bG_a^{s(k+1)}$ where $\bG_m$ acts on $\bG_a^{s(k+1)}$ by scalar multiplication.
\end{Lemma}

\begin{proof}
By construction, $\Gamma_k$ defines a Zariski-closed subset of $\GL_{(s+1)(k+1),\bF(t)}$. A direct computation shows that
\[
\gamma_k(a_1,\underline{b}_1,\ldots,\underline{e}_1)\cdot \gamma_k(a_2,\underline{b}_2,\ldots,\underline{e}_2)=\gamma_k(a_1a_2,\underline{b}_2+a_2\underline{b}_1,\ldots, \underline{e}_2+a_2\underline{e}_1)
\]
and
\[ \gamma_k(a,\underline{b},\underline{c},...,\underline{e})^{-1}= \gamma_k(a^{-1},-a^{-1}\underline{b},-a^{-1}\underline{c},...,-a^{-1}\underline{e}). \]
Hence $\Gamma_k$ form a group, and its subgroup of elements $\gamma_k(1,\underline{b},\underline{c},...,\underline{e})$ is a normal subgroup and isomorphic to $\bG_a^{s(k+1)}$.
Further, the subgroup of elements $\gamma_k(a,\underline{0},\ldots,\underline{0})$ is isomorphic to $\bG_m$. Its action on $\bG_a^{s(k+1)}$ stems from the computation:
\[ \gamma(a,\underline{0},\ldots,\underline{0})^{-1}\cdot 
\gamma_k(1,\underline{b},\underline{c},...,\underline{e})
\cdot \gamma(a,\underline{0},\ldots,\underline{0})
= \gamma(1, a\cdot \underline{b},a\cdot\underline{c},...,a\cdot\underline{e}) \]
from which we deduce the isomorphism $\Gamma_k\cong \bG_m\rtimes \bG_a^{s(k+1)}$.
\end{proof}

\begin{Proposition}\label{prop:upper-bound-Galois-group}
The difference Galois group $\underline{\operatorname{Gal}}(\cR/F)$ is contained in~$\Gamma_k$.
\end{Proposition}

\begin{proof}
Part \ref{item:Z-from-Psi} of Proposition \ref{prop:Galois-theory} gives an expression for the universal element in the difference Galois group; this is how this upper bound for the Galois group is found. In this respect, we compute the matrix
$\rho_{[k]}(\Xi)^{-1}\otimes \rho_{[k]}(\Xi)\in \GL_{(s+1)(k+1)}(\cR\otimes_F \cR)$.\footnote{As explained in the appendix, for matrices $M$ and $N$, the notation $M\otimes N$ is shorthand for the matrix
$\left(\sum_{l} M_{il}\otimes N_{lj}\right)_{ij}$ which equals the product $\left( M_{ij}\otimes 1\right)_{ij}\cdot \left( 1\otimes N_{ij}\right)_{ij}$.} 

As it is usual, we extend the hyperdifferential operators $\hdte{j}$ to $\cR\otimes_F \cR$ using the Leibniz rule, i.e.~
\[ \hdt{j}{r_1\otimes r_2} = \sum_{j_1+j_2=j}  \hdt{j_1}{r_1}\otimes \hdt{j_2}{r_2}. \]
With this definition, $\rho_{[k]}$ extends to a ring homomorphism on $\cR\otimes_F \cR$, and also on $\Mat_{(s+1)(k+1)}(\cR\otimes_F \cR)$.

Since
\[
\Xi^{-1} = \begin{pmatrix}1 & -\underline{\xi} \\  & \omega^{n} \end{pmatrix},
\]
and since $\rho_{[k]}$ is a ring homomorphism, we obtain
\begin{eqnarray*}
\rho_{[k]}(\Xi)^{-1}\otimes \rho_{[k]}(\Xi) &=& \rho_{[k]}\left( \Xi^{-1}\otimes \Xi\right) 
= \rho_{[k]}\left( \begin{pmatrix}1 & -\underline{\xi} \\  & \omega^{n} \end{pmatrix} \otimes \begin{pmatrix}1 & \omega^{-n}\underline{\xi} \\  & \omega^{-n} \end{pmatrix} \right) \\
&=& \rho_{[k]} \begin{pmatrix}
1 & 1\otimes \omega^{-n}\underline{\xi} -\underline{\xi}\otimes \omega^{-n} \\ &  \omega^n\otimes \omega^{-n}
\end{pmatrix}.
\end{eqnarray*}
Therefore, all elements in the difference Galois group are of the form
\[ \begin{pmatrix}
\begin{matrix} 1 & \underline{b} \\ & a \end{matrix} & \vline & \begin{matrix}0  & \underline{c} \\ & a_1 \end{matrix} & \vline &  \cdots & \vline & \begin{matrix}0  & \underline{e} \\ &  a_k \end{matrix} \\
\hline
 & \vline & \begin{matrix} 1 & \underline{b} \\ & a \end{matrix} & \vline & \ddots & \vline & \ddots \\
\hline
 & \vline & & \vline & \ddots & \vline & \begin{matrix} 0 & \underline{c} \\ & a_1 \end{matrix} \\
\hline
 & \vline & & \vline & & \vline & \begin{matrix} 1 & \underline{b} \\ & a \end{matrix}
\end{pmatrix}
\]
It remains to conclude that the coefficients $a_j$ are zero. This follows from the computation:
\begin{eqnarray*} 
\hdt{j}{\omega^n\otimes \omega^{-n}} 
&=& \sum_{j_1+j_2=j} \hdt{j_1}{\omega^n}\otimes \hdt{j_2}{\omega^{-n}}
=  \sum_{j_1+j_2=j} \hdt{j_1}{\omega^n}\otimes \omega^{-n}y_{j_2} \\
&=& \sum_{j_1+j_2=j} \hdt{j_1}{\omega^n}y_{j_2}\otimes \omega^{-n} =\sum_{j_1+j_2=j} \hdt{j_1}{\omega^n}\hdt{j_2}{\omega^{-n}}\omega^{n}\otimes \omega^{-n} \\
&=& \hdt{j}{\omega^n\omega^{-n}}\omega^n \otimes \omega^{-n} 
=0.
\end{eqnarray*}
\end{proof}

\begin{Proposition}\label{prop:Galois-group}
The difference Galois group $\underline{\operatorname{Gal}}(\cR/F)$ equals $\Gamma_k$.
\end{Proposition}

\begin{proof}
First, we show that the image of $\underline{\Gal}(\cR/F)$ under the quotient map $\Gamma_k\to \bG_m, \gamma(a,\underline{b},\ldots, \underline{e})\mapsto a$ is $\bG_m$.

By Galois correspondence this image is isomorphic to the difference Galois group $\underline{\Gal}(F[\omega^n,\omega^{-n}]/F)$.
As $\omega^n$ is transcendental over $F$, the Galois group $\underline{\Gal}(F[\omega^n,\omega^{-n}]/F)$ has dimension $1$. As such, it equals $\bG_m$.

We claim that $\underline{\Gal}(\cR/F)$ also contains the full subgroup $\bG_a^{s(k+1)}\subseteq \Gamma_k$.
Assume that this is not the case, hence $V:=\underline{\Gal}(\cR/F)\cap \bG_a^{s(k+1)}$ is a strict subgroup. Since, $V$ is stable under the $\bG_m$-action,  \cite[Prop.~6.2.3]{papanikolas} shows that $V$ is a linear subspace, i.e. defined by linear polynomials equations. This means that there exist coefficients $c_{ij}\in \bF(t)$, $i\in \{1,\ldots ,s\}$, $j\in \{1,\ldots,k\}$, not all of them zero, such that for all 
$\gamma(1,\underline{b_1},\underline{b_2},\ldots, \underline{b_k})\in \underline{\Gal}(\cR/F)(S)$, we have:
\[  \sum_{i,j} c_{ij} (b_j)_i =0. \]
In particular, for the universal element, we obtain
\begin{eqnarray*} 
0&=& \sum_{i,j} c_{ij}\cdot \left( 1\otimes \hdt{j}{\omega^{-n}\xi_{\alpha_i}} -\hdt{j}{\xi_{\alpha_i}\otimes \omega^{-n}}\right) \\
&=& \sum_{i,j} c_{ij}\otimes \hdt{j}{\omega^{-n}\xi_{\alpha_i}}
- \sum_{i,j} c_{ij} \hdt{j}{\xi_{\alpha_i}\otimes \omega^{-n}} \in \cR\otimes_F \cR.
\end{eqnarray*}
As the sum $\sum_{i,j} c_{ij} \hdt{j}{\xi_{\alpha_i}\otimes \omega^{-n}}$ is in $\cR\otimes_F F(\omega^n)$, we obtain a linear relation for the family 
$(1\otimes 1,1\otimes \hdt{j}{\frac{\xi_{\alpha_i}}{\omega^n}}|i\in \{1,\ldots ,s\},j\in \{1,\ldots,k\})$ in $\cR\otimes_F \cR$ with coefficients in $\cR\otimes_F F[\omega^n,\omega^{-n}]$. Yet, by Proposition \ref{prop:linear-independence}, the family  $(1,\hdt{j}{\frac{\xi_{\alpha_i}}{\omega^n}}|i\in \{1,\ldots ,s\},j\in \{1,\ldots,k\})$ is linearly independent over $F(\omega^n)$ in $\CI\cs{t}$, and hence linearly independent over $\cR\otimes_F F(\omega^n)$ in $\cR\otimes_F \CI\cs{t}$. A contradiction.
\end{proof}

\begin{proof}[Proof of Proposition \ref{prop:algebraic-independence-xi-and-hyper}]
From Proposition \ref{prop:Galois-group}, we know that $\underline{\Gal}(\cR/F)=\Gamma_k$, in particular, \[\dim(\underline{\Gal}(\cR/F))=1+s(k+1).\]
By Proposition \ref{prop:Galois-theory}\eqref{item:dimension}, we deduce that
\[ \trdeg_F(\cR)=1+s(k+1). \]
As the ring $\cR$ is generated over $F$ by the $1+s(k+1)$ elements
$\omega^n$, $\hdt{j}{\omega^{-n}\xi_{\alpha_i}}$ ($i\in \{1,\ldots ,s\},j\in \{1,\ldots,k\}$), and the inverse of $\omega^n$, these $1+s(k+1)$ elements have to be algebraically independent over $F$. By Remark \ref{rem:generators-of-cR}, the same holds for the family $(\omega^n, \hdt{j}{\xi_{\alpha_i}}\mid i\in \{1,\ldots ,s\},j\in \{1,\ldots,k\})$. 
\end{proof}

\subsection{Algebraic independence of generalized Carlitz polylogs}\label{subsec:algebraic-indep-polylog}
Recall that the coefficients $\Li_n^{[j]}(\alpha_i)\in \KI$ were defined by \eqref{eq:higherpolylog}. Equivalently, they are given, for all $j\geq 0$, by
\[  \Li_n^{[j]}(\alpha_i) = \hdt{j}{(t-\theta)^n\xi_{\alpha_i}}|_{t=\theta}. \]
Similarly, we introduce coefficients $\pi_n^{[j]}=\hdt{j}{(\omega^{n})^{(1)}}|_{t=\theta}\in \CI$ corresponding to the expansion of $(\omega^n)^{(1)}$ at $t=\theta$:
\[
(\omega^n)^{(1)}=\pi_n^{[0]}+(t-\theta)\pi_n^{[1]}+(t-\theta)^2\tilde{\pi}_n^{[2]}+\ldots 
\]
In particular, $\pi_n^{[0]}=\tilde{\pi}^n$ is the $n$th power of Carlitz' period. \\

The following result supersedes what we were aiming to prove in Theorem~\ref{thm:algebraic-independence-of-higher-Carlitz-polylog}:
\begin{Corollary}\label{cor:algebraic-independence-polylog}
Under assumptions \ref{assumption:pnotdividen} and \ref{assumption:linearly-independent}, the family
\[
(\pi_n^{[j]},\Li_n^{[j]}(\alpha_i)~|~i\in \{1,...,s\},j\geq 0)
\]
of $\CI$, is algebraically independent over $\bF(\theta)$.
\end{Corollary}

In the remaining of this section, we explain how to derive Corollary \ref{cor:algebraic-independence-polylog} from the ABP criterion and Theorem \ref{thm:main-theorem}. \\

Although the ABP criterion has been recalled above, we present below the form obtained by Papanikolas in \cite[Thm. 5.2.2 and proof]{papanikolas}. Let $K$ be the field $\bF(\theta)$ and let $\overline{K}$ denote its algebraic closure in $\CI$ (we assume implicitly that $L$ is contained in $\overline{K}$). Let $\ell$ be a positive integer. Fix a matrix $\Phi(t)\in \Mat_\ell(\overline{K}[t])$ whose determinant is of the form $c(t-\theta)^s$ for some $s\geq 1$ and $c\in \overline{K}^{\times}$. Let also $\Psi(t)$ be a matrix in $\GL_\ell(\CI\langle t \rangle)$ such that 
\[
\Psi(t)^{(-1)}=\Phi(t)\Psi(t).
\]
From \emph{loc.\,cit.}, $\Psi(t)\in\Mat_\ell(\CI\langle\!\langle t \rangle\!\rangle)$ and one can evaluate entries of $\Psi(t)$ at $t=\theta$. 
\begin{Theorem*}[Papanikolas, Thm.~5.2.2 and proof]
The transcendence degree of $\overline{K}(t)(\psi_{ij}(t))$ over $\overline{K}(t)$ equals that of $\overline{K}(\psi_{ij}(\theta))$ over $\overline{K}$, where $\Psi=(\psi_{ij})_{i,j}$. 
\end{Theorem*}
As a consequence, 

\begin{proof}[Proof of Corollary \ref{cor:algebraic-independence-polylog}]
Consider the matrix 
\[ \left( \omega^n \Xi\right)^{(1)}=
\begin{pmatrix}
(t-\theta)^n\omega^n & (t-\theta)^n\xi_{\alpha_1}-\alpha_1 & \cdots & (t-\theta)^n\xi_{\alpha_s}-\alpha_s \\
&&& \\
0 &1 & & 0\\
\vdots & \ddots & \ddots & \\
0 & \cdots & 0 & 1
\end{pmatrix} \]
and its prolongations $\rho_{[k]}\left(( \omega^n \Xi)^{(1)}\right)$ for an arbitrary large integer $k\geq 0$. The entries of $\rho_{[k]}\left(( \omega^n \Xi)^{(1)}\right)(\theta)$ are either $0$, $1$ or $\tilde{\pi}^n$, $\Li_n(\alpha_i)-\alpha_i$, $\pi_n^{[j]}$ and $\Li_n^{[j]}(\alpha_i)$ for $i\in \{1,\ldots,s\}$, $j\in\{1,\ldots , k\}$. However this matrix -- or rather the difference equation it is a solution of --  does not satisfy the prerequisites of the theorem. Instead, we consider:
\[ \Psi:=\,^{\tr}\rho_{[k]}\left(( \omega^n \Xi)^{(1)}\right)^{-1}\]
which is a solution of the difference equation
\[ \Psi^{(-1)}= \Phi\cdot \Psi\]
where
\[ \Phi:=\rho_{[k]}(\begin{pmatrix}
(t-\theta)^n & 0 & \cdots & \cdots & 0 \\
-\alpha_1 & 1 & \ddots & & \vdots \\
\vdots & 0 & \ddots & \ddots & \vdots \\
-\alpha_s & 0 & \cdots & 0 & 1
\end{pmatrix})\in L[t]. \]
Observe that the determinant of $\Phi$ is $(t-\theta)^{n(k+1)}$ and hence we can apply Papanikolas' theorem to this system. Let
\[
T_k:=\trdeg_{\overline{K}(t)}~\overline{K}(t)\left(\hdt{j}{\omega^n},\hdt{j}{\xi_{\alpha_i}}\,|\, i\in\{1,\ldots, s\}, j\in \{0,\ldots, k\}\right).
\]
By Theorem \ref{thm:main-theorem}, $T_k$ is \emph{maximal}, that is $T_k=(s+1)(k+1)$. As the entries of an invertible matrix and those of its inverse generate the same field, we obtain:
\begin{align*}
T_k &=\trdeg_{\overline{K}(t)}~\overline{K}(t)\left(\rho_{[k]}(\omega^n\Xi)^{(1)}\right) \\
&= \trdeg_{\overline{K}(t)}~\overline{K}(t)(\Psi) \\
&= \trdeg_{\overline{K}}~\overline{K}(\Psi(\theta)) \\
&= \trdeg_{\overline{K}}~\overline{K}\left(\rho_{[k]}(\omega^n\Xi)^{(1)}(\theta)\right) \\
&= \trdeg_{\overline{K}}~\overline{K}\left(\pi_n^{[j]},\Li_n^{[j]}(\alpha_i)\,|\, i\in\{1,\ldots, s\}, j\in\{0,\ldots, k\} \right).
\end{align*}
We deduce that the elements $\pi_n^{[j]},\Li_n^{[j]}(\alpha_i)$ for $i\in\{1,\ldots, s\}, j\in\{0,\ldots, k\}$ are algebraically independent over $\overline{K}$, hence over $K$. Since $k\geq 0$ was taken arbitrarily, this concludes. 
\end{proof}

\section{Regulator of $\underline{A}(n)$}\label{sec:regulator-A(n)}
As stated in the introduction, we have:
\begin{Proposition}\label{prop:equivalence-beilinson-for-carlitz}
Let $(\alpha_1,\ldots,\alpha_s)$ be elements of $\bF[\theta]$ of degree $<n$ such that the family of classes $\left([\underline{L}(\alpha_1)],\ldots,[\underline{L}(\alpha_s)]\right)$ is a basis of 
$\Ext^{1,\operatorname{reg}}_{\infty}(\bbone,\underline{A}(n))$. Then, $\mathrm{Reg}(\underline{A}(n))$ is represented by the matrix 
\begin{equation}\label{eq:matrix-polylog}
\begin{pmatrix}
\Li_n^{[n-s]}(\alpha_1) & \Li_n^{[n-s]}(\alpha_2) & \cdots & \Li_n^{[n-s]}(\alpha_s) \\ 
\Li_n^{[n-s+1]}(\alpha_1) & \Li_n^{[n-s+1]}(\alpha_2) & \cdots & \Li_n^{[n-s+1]}(\alpha_s) \\ 
\vdots & \vdots & \ddots & \vdots \\
\Li_n^{[n-1]}(\alpha_1) & \Li_n^{[n-1]}(\alpha_2) & \cdots & \Li_n^{[n-1]}(\alpha_s) 
\end{pmatrix} \in \Mat_s(\KI).
\end{equation}
\end{Proposition}
In this section, we explain how to obtain the above proposition from the reference \cite{gazda2}. There, a regulator attached to a $t$-motive $\underline{M}$ over $\bF(\theta)$ is introduced (Definition 4.3). In the situation $\underline{M}=\underline{A}(n)$, it is the $\KI$-linear map
\[
\mathrm{Reg}(\underline{A}(n)):\Ext^{1,\operatorname{reg}}_{A,\infty}(\bbone,\underline{A}(n))\otimes_A \KI \longrightarrow \Ext^{1,\operatorname{ha}}_{\mathbf{HP}^+,\infty}(\bbone,\cH(n)^+),
\]
where we denoted by $\cH(n)^+$ the \emph{Hodge-Pink structure equipped with its infinite Frobenius} canonically attached to $\underline{A}(n)$ (see \emph{loc.\,cit.} Definition 3.25), induced by the exactness of the \emph{Hodge-Pink realization functor} (\emph{loc.\,cit.} Proposition 3.26).  

From \emph{loc.\,cit.} Theorem 3.28, we have a  square, where horizontal maps are isomorphism, 
\begin{equation}\label{eq:square-regulator}
\begin{tikzcd}[column sep=2.5em,row sep=2.5em]
\displaystyle\frac{\left\{\xi\in \KI\langle t \rangle|\Delta(\xi)\in \bF[\theta,t]\right\}}{\underline{A}(n)^+_B +\bF[\theta,t]} \otimes_A \KI \arrow[r,"\sim"] \arrow[d,"{[\xi]\mapsto [\xi]}"'] & 
\Ext^{1,\operatorname{reg}}_{A,\infty}(\mathbbm{1},\underline{A}(n))\otimes_A \KI \arrow[d,"\mathrm{Reg}(\underline{A}(n)))"]  \\ \displaystyle\frac{(t-\theta)^{-n}\KI[\![t-\theta]\!]}{\underline{A}(n)^+_B\otimes_{A}\KI+\KI[\![t-\theta]\!]} \arrow[r,"\sim"] & \Ext^{1,\operatorname{ha}}_{\mathbf{HP}^+,\infty}(\bbone,\cH(n)^+)
\end{tikzcd}
\end{equation}
where $\Delta:\KI\langle t \rangle\to \KI\langle t \rangle$ denote the difference operator $\xi\mapsto \xi^{(1)}-(t-\theta)^n\xi$. That this diagram commutes -- with left-hand vertical map given by mapping the class of $\xi$ modulo $\bF[\theta,t]$ to the class of $\xi$ modulo $\KI[\![t-\theta]\!]$ -- results from a computation similar to what is done in the proof of Proposition 4.6 in \emph{loc.\,cit}. \\

Let $(\alpha_1,\ldots,\alpha_s)$ be elements of $\bF[\theta]$ of degree $<n$ such that the family of \emph{polylog extension classes} $([\underline{L}(\alpha_1)],\ldots, [\underline{L}(\alpha_s)])$ forms a basis of the space \begin{equation}\label{eq:intriguing-space}
\Ext^{1,\operatorname{reg}}_{A,\infty}(\mathbbm{1},\underline{A}(n))\otimes_A \KI.
\end{equation}
By Theorems \ref{thm:polylogclass} and \ref{thm:mot-coh-car}, we have $s=n$ if $q-1\nmid n$ or $s=n-1$ if $q-1|n$; in addition, the preimage of this basis through the upper horizontal map of the square \eqref{eq:square-regulator} is given by class of the family $(\xi_{\alpha_1},\ldots,\xi_{\alpha_s})$. On the other-hand, since the valuation of $\omega^n$ at $(t-\theta)$ is precisely $-n$, the class of the family $((t-\theta)^{-s},\ldots, (t-\theta)^{-1})$ forms a basis of the target. Hence The matrix $(\Li_n^{[j]}(\alpha_j))_{i,j}$, $i\in \{1,\ldots s\}$ and $j\in \{n-s,...,n-1\}$, represents the left-hand vertical $\KI$-linear morphism. Proposition \ref{prop:equivalence-beilinson-for-carlitz} follows. 

\begin{Corollary}\label{cor:exact-rank}
The rank of $\mathrm{Reg}(\underline{A}(n))$ is precisely $\frac{n}{p^{v_p(n)}}-\mathbf{1}_{q-1|n}$ where $v_p(n)$ denote the $p$-adic valuation of $n$ and $\mathbf{1}_{q-1|n}$ is the indicator function of $q-1|n$. 
\end{Corollary}
\begin{proof}
Let $c=v_p(n)$ and $m=n/p^c$. By \eqref{eq:p-polylog}, the $j$th line of the matrix \eqref{eq:matrix-polylog} vanishes whenever $p^c\nmid j$. Thus, it suffices to show that the matrix $(\Li_{m}^{[j]}(\alpha_i^{1/p^{c}}))_{i,j}$ has maximal rank, where now $i\in \{1,\ldots, n\}$ and $j\in \{k/p^c~|~k\in\{\mathbf{1}_{q-1|n},\ldots, n-1\},~p^c|k\}$. The latter set has cardinality $n/p^c-\mathbf{1}_{q-1|n}$. Yet, as the classes $[\underline{L}(\alpha_1)],\ldots,[\underline{L}(\alpha_s)]$ are linearly independent, no $\bF[t]$-linear relation among $(\omega^n,\xi_{\alpha_1,n},\ldots,\xi_{\alpha_s,n})$ belongs to $\bF[\theta,t]$ (Proposition \ref{prop:generators-of-Hmot}) and hence no $\bF[t]$-linear relation among $(\omega^{m},\xi_{\alpha_1^{1/p^c},m},\ldots,\xi_{\alpha_s^{1/p^c},m})$ belongs to $\bF[\theta^{1/p^c},t]$. In particular, we can apply Corollary \ref{cor:algebraic-independence-polylog} for $\alpha_1^{1/p^c},\ldots,\alpha_n^{1/p^c}$ to show that none of the corresponding minors vanish. 
\end{proof}

\appendix
\section{Picard-Vessiot theory for linear difference equations}\label{app:picard-vessiot}

We present in this section a short handout of difference Galois theory insisting on results that are used in this article. Statements -- in perhaps different form or phrasing -- can be found in \cite[Sect.~2]{maier-annalen}. For proofs, one is referred to \cite{maier-phd}. The book of van der Put and Singer \cite{vdps} is too restrictive in its assumptions to fit our setting. We still recommend it as an introduction to the subject for the interested reader.

\newcommand{\ck}{\mathsf{k}}
\newcommand{\sA}{\mathsf{A}}

\subsection{Setup}
Let $\cF$ be a field, $\tau:\cF\to \cF$ be a field endomorphism, and let
\[
\ck:=\cF^\tau:=\{ f\in \cF\mid \tau(f)=f \}
\]
be the subfield of $\tau$-invariant elements.

Let $r$ be a positive integer. Given an invertible matrix $\Theta\in \GL_r(\cF)$, we consider the linear difference equation
\begin{equation} \label{eq:lin-diff-eq}
    \tau\begin{pmatrix}
    y_1 \\ \vdots \\ y_r
    \end{pmatrix}= \Theta\cdot \begin{pmatrix}
    y_1 \\ \vdots \\ y_r
    \end{pmatrix}.
\end{equation}
As is usual in the theory, the notation on the left-hand side means applying $\tau$ coefficient-wise to vectors, respectively entry-wise for matrices.

We assume that there exists an integral domain $\cL$, containing $\cF$ as a subring, equipped with an extension of the endomorphism $\tau$ to $\cL$ -- which we shall still denote by $\tau$ -- such that the difference equation \eqref{eq:lin-diff-eq} admits a \emph{fundamental solution matrix} $\Psi$; that is, there exists $\Psi\in \GL_r(\cL)$ satisfying
\[   \tau(\Psi) = \Theta \Psi. \]
We assume further that $\cL^\tau=\cF^\tau=\ck$.
\begin{Remark}\label{rmk:representation-matrix}
Observe that any solution $\Psi_0\in \Mat_r(\cL)$ of \eqref{eq:lin-diff-eq} differs from $\Psi$ by a matrix with coefficients in $\ck$; indeed, $C=\Psi^{-1}\Psi_0$ satisfies $\tau(C)=C$. 
\end{Remark}

\begin{Example}
In the core of the paper, we used $\cF=F=\bF(\theta,t)(y_1,y_2,\ldots)$, $\tau:\cF\to \cF$ for the $\bF(t)$-algebra endomorphism mapping $\theta\mapsto \theta^q$, and $\cL$ for the ring $\CI\cs{t}$.
\end{Example}

\subsection{Definitions from Picard-Vessiot theory}\label{subsec:Def-PicVess}
\paragraph{The Picard-Vessiot ring.} In view of Remark \ref{rmk:representation-matrix}, the subring $\cR:=\cF[\Psi,\det(\Psi)^{-1}]$ of $\cL$, generated by the entries of $\Psi$ and the inverse of its determinant, does not depend on the choice of the fundamental solution matrix $\Psi$. It is called the \emph{Picard-Vessiot ring} for the difference equation \eqref{eq:lin-diff-eq} and will be denoted $\cR=\cF[\Psi,\Psi^{-1}]$ for short.

\paragraph{The Picard-Vessiot ideal $I$.} Let $\cF[X,X^{-1}]:= \cF[X_{ij},\det(X)^{-1}\mid 0\leq i,j\leq r]$ be the localization of the polynomial ring over $\cF$ in $r^2$ indeterminates $X_{ij}$, localized at the polynomial $\det(X)$ -- the determinant of the square matrix $X=(X_{ij})_{i,j}$. We equip this localized polynomial ring with an endomorphism extending $\tau$ on $\cF$ by setting
\[ \tau(X)=\Theta \cdot X \]
coefficient-wise. Then, by construction, the Picard-Vessiot ring $\cR$ is the image of the difference homomorphism
\[
\pi: \cF[X,X^{-1}]\longrightarrow \cL, \quad X_{ij}\longmapsto \Psi_{ij}.
\]
This way, $\cR$ becomes isomorphic, as a difference ring, to the quotient $\cF[X,X^{-1}]/I$ where $I:=\ker(\pi)$. We name $I\subseteq \cF[X,X^{-1}]$ the \emph{Picard-Vessiot ideal}.

\paragraph{The difference Galois group.} The \emph{difference Galois group of $\cR/\cF$}, denoted by $\cG=\underline{\Gal}(\cR|\cF)$, is given as the group functor
\begin{align*}
\cG: ~& \CAlg_\ck & \longrightarrow & \quad \CGrp , \\
& ~\sA & \longmapsto & \quad \cG(\sA):= \left\{
\alpha\in \Aut_{\cF\otimes_\ck \sA}(\cR\otimes_\ck \sA) \,\, \middle|\,\, 
(\tau\otimes \id)\circ \alpha =\alpha\circ (\tau\otimes \id) \right\} &
\end{align*}
Any element $\alpha\in \cG(\sA)$ acts on the set of solutions of \eqref{eq:lin-diff-eq} via $\Psi_0\mapsto \alpha(\Psi_0)$ where, once again, $\alpha$ is applied entry-wise. By Remark \ref{rmk:representation-matrix}, there exists a matrix $C(\alpha)\in \Mat_r(\ck)$ such that $\alpha(\Psi)=\Psi C(\alpha)$. If $C(\alpha)=C(\beta)$, then $\alpha(\Psi)=\beta(\Psi)$ and thus $\alpha=\beta$ on $\cR$. Consequently, the map:
\[
C:\cG(\sA)\longrightarrow \GL_r(\sA), \quad \alpha\longmapsto C(\alpha)
\]
is an injective group homomorphism. It is clear that this embedding is natural in the $\ck$-algebra $\sA$, thus interpreting the difference Galois group $\underline{\Gal}(\cR|\cF)$ as subgroup functor of $\GL_{r/\ck}$. 

\paragraph{The Hopf ideal $J$.} A main result in difference Galois theory states that $\cG$ is a closed subgroup of $\GL_{r/\ck}$ for the Zariski topology. As such it is given as the spectrum of a Hopf algebra. More precisely, in the same way as above, we consider the algebra
$\ck[Z,Z^{-1}]:= \ck[Z_{ij},\det(Z)^{-1}\mid 0\leq i,j\leq r]$ as the Hopf algebra associated to $\GL_{r/\ck}$, then 
\[  \cG=\Spec(\ck[Z,Z^{-1}]/J), \]
for some Hopf ideal $J\subseteq \ck[Z,Z^{-1}]$.

\subsection{Results from Picard-Vessiot theory}
The main results in Picard-Vessiot theory are derived from the relation among the Picard-Vessiot ideal $I$ and the Hopf ideal $J$ that we now describe. Because $\Psi$ is invertible, we have an isomorphism of $\cL$-algebras
\[  \iota: \cL\otimes_\cF \cF[X,X^{-1}] \longrightarrow \cL\otimes_\ck \ck[Z,Z^{-1}], \quad 1\otimes_{\cF} X_{ij}\longmapsto \sum_{\ell=1}^r \Psi_{i\ell}\otimes_{\ck} Z_{\ell j}, \]
or rather, in obvious matrix notations, $\id_r\otimes_{\cF} X\mapsto \Psi\otimes_{\ck} Z$. Then, it is a fact that
\begin{equation}\label{eq:relationIJ}
\iota(\cL\otimes_{\cF} I)=\cL\otimes_{\ck} J.
\end{equation}
Equivalently, we have an induced isomorphism:
\[ \bar{\iota}: \cL\otimes_\cF \cR \stackrel{\sim}{\longrightarrow} \cL\otimes_\ck \ck[\cG]  \]
where $\ck[\cG]:=\ck[Z,Z^{-1}]/J$ is the ring of global sections of $\cG$. In geometric terms, this means that the Picard-Vessiot ring is a principal homogeneous space -- or torsor -- over the Galois group scheme $\cG$.

\begin{Remark}
The isomorphism $\iota$ and the induced isomorphism $\bar{\iota}$ are indeed compatible with the difference operator $\tau$, in the way that
\[ (\tau\otimes \id)\circ \iota = \iota \circ (\tau\otimes \tau). \]
\end{Remark}

Let $\bar{Z}_{ij}\in \ck[\cG]$ be the image of $Z_{ij}$ modulo $J$. Then the matrix $\bar{Z}=(\bar{Z}_{ij})_{i,j}$ is the universal element in $\cG(\ck[\cG])$, i.e. for every $\ck$-algebra $\sA$, every element of $\cG(\sA)$ is a specialization of $\bar{Z}$. The relation among $I$ and $J$ (c.f. \eqref{eq:relationIJ}) has the following consequences which were used in the core of the paper.
\begin{Proposition}\label{prop:Galois-theory}
With notation as above.
\begin{enumerate}
    \item \label{item:dimension} The transcendence degree of $\cR$ over $\cF$ equals the dimension of the algebraic group $\underline{\Gal}(\cR/\cF)$;
    \item \label{item:Z-from-Psi} For all $i,j\in\{1,...,r\}$: $\bar{Z}_{ij}=\bar{\iota}\left((\Psi^{-1}\otimes \Psi)_{ij}\right)$;
    \item \label{item:upper-bound} If $\tilde{\cG}\subseteq \GL_{r/\ck}$ is a closed subgroup defined over $\ck$ such that $\Psi\in \tilde{\cG}(\cL)$, then $\cG\subseteq \tilde{\cG}$;
    \item \label{item:linear} Every linear polynomial in $J$ corresponds to a linear polynomial in $I$, and vice versa. More precisely, the elements $Z_{1j}$ are mapped to a linear combinations of the $X_{1j}$ via $\iota^{-1}$. In particular, if $J$ contains a linear polynomial $P$ in the $Z_{1j}$, then 
    $\iota^{-1}(P)$ is a linear polynomial in the $X_{1j}$, and by \eqref{eq:relationIJ} is in $\cL\otimes_{\cF} I$.
\end{enumerate}
\end{Proposition}

\def\cprime{$'$}

\end{document}